\newtheorem{thm}{Theorem}[section]
\newtheorem{lem}[thm]{Lemma}
\newtheorem{prop}[thm]{Proposition}
\newtheorem{Question}{Question}
\newtheorem*{lemma}{Lemma}
\theoremstyle{definition}
\newtheorem*{question}{Question}
\newtheorem{defn}[thm]{Definition}
\newtheorem{rem}[thm]{Remark}
\numberwithin{equation}{section}
\title[A Frobenius Theorem for Continuous Distributions]{A Frobenius Theorem for Corank-1 Continuous Distributions   in Dimensions two and three
}
\author{Stefano Luzzatto}
\author{Sina Tureli}
\author{Khadim War}
\thanks{\emph{Acknowledgements:} We would like to thank Ra\'ul Ures and Jana Rodriguez-Hertz for their lectures at ICTP and many useful conversations, and in particular a question of Ra\'ul Ures which partly motivated the strategy in this paper.  We also thank Giovanni Bellettini for some interesting and stimulating comments during the initial part of the research. We are also very thankful to Seppo Hiltunen fand to the referee for their thorough reading of a preliminary version of this paper and comments which helped us to improve the argument and the presentation in several places. This research has been supported [in part] by EU Marie-Curie IRSES Brazilian-European partnership in Dynamical Systems (FP7-PEOPLE-2012-IRSES 318999 BREUDS). 
}
\date{18 March 2016}
\begin{document}

\maketitle

\begin{abstract}
We formulate a notion  of    \emph{(uniform) asymptotic involutivity} and show that it implies (unique) integrability of corank-1 continuous distributions in dimensions three or less. This generalizes and extends a classical theorem  of  Frobenius Theorem which says that an involutive  \(  C^{1}  \) distribution is uniquely integrable.
 \end{abstract}

\section{Introduction and statement of results}
 Let \(  M  \) be a smooth Riemannian manifold of dimension \(  n\geq 2  \). A \emph{rank-$k$ distribution} $\Delta$ on \(  M  \) is a choice of $k$-dimensional linear subspaces $\Delta_p\subset T_pM$ at each point $p\in M$. A (local) integral manifold \(  N  \) of $\Delta$ is a
 submanifold $N\subset M$ such that  $T_pN=\Delta_p$ at each point $p\in N$. The distribution $\Delta$ is
 \emph{integrable}  if there exists an integral manifold through every point, and \emph{uniquely integrable} if this
 integral manifold is unique.

The integrability and unique integrability of a given distribution are classical questions which generalize the problem of existence and uniqueness of solutions for ODE's. Unlike in the case of ODE's, there are counterexamples to show that for higher-dimensional distributions smoothness plays a role but is, in general, not sufficient to guarantee integrability. Indeed, in the higher-dimensional setting, some \emph{involutivity} conditions are required over and above some regularity conditions such as \(  C^{1}  \) or Lipschitz, and indeed classical involutivity conditions require some regularity to even be formulated.  One of the main points of this paper is to formulate some involutivity conditions for \emph{continuous} corank-1 distributions and to show that they imply integrability and, in some cases, unique integrability. 

For starters we recall classical results for ODE's which imply that any rank one continuous distribution is integrable but possibly non-uniquely. Therefore our focus for the integrability part will be on rank two continuous distributions inside three dimensional manifolds, while the uniqueness results will also focus on rank-1 distributions on surfaces. We will formulate these conditions in two stages: the first one more natural and the second one more general and more technical and more useful for applications. Since integrability is a local property we will  work in some fixed local chart of the manifold; in particular the corank-1 distribution in this local chart can always be written as the kernel of some 1-form \(  \eta  \). If \(  \eta  \) is sufficiently regular (e.g. \(  C^{1}  \)), it admits an exterior derivative which we will denote by \(  d\eta  \). All norms to be used below will be the norms induced by the Riemannian volume. Unless specified otherwise all norms and converging sequences refer to the  \(  C^{0}  \) topology.

\subsection{Asymptotic involutivity}

\begin{defn}\label{defasyminv} A continuous corank-1 distribution \(  \Delta = ker (\eta) \) is \emph{asymptotically involutive} if there exists a  sequence of \(  C^{1}  \) differential 1-forms \(  \eta_{k}  \) with \( \eta_{k}\to \eta \) such that \[ \|\eta_{k}\wedge d\eta_{k}\|e^{\|d\eta_k\|} \to 0 \] as \(  k\to\infty  \).  \(  \Delta  \) is \emph{uniformly asymptotically involutive} if moreover we have \[ \|\eta_{k}-\eta\| e^{\|d\eta_k\|} \to 0. \] \end{defn} 

\begin{thm}\label{Frobenius} Let \(  \Delta   \) be a corank-1 distribution on a smooth manifold of dimension 2 or 3. If \(  \Delta  \) is asymptotically involutive then it is integrable. If $\Delta$ is uniformly asymptotically involutive then it is uniquely integrable.
\end{thm}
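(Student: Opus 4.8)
The plan is to work in a local chart and reduce everything to scalar ODE estimates for the approximating forms $\eta_k$, building approximate integral manifolds whose failure to be tangent to $\Delta$ is controlled by exactly the quantities in Definition~\ref{defasyminv}. Both conclusions being local, I fix $p$ and a chart around it, with coordinates $(x,y,z)$ (or $(x,y)$ when $\dim M=2$) in which $\Delta_p$ is transverse to $\partial_z$ (resp.\ $\partial_y$); writing $\eta=\alpha\,dx+\beta\,dy+\gamma\,dz$ and $\eta_k=\alpha_k\,dx+\beta_k\,dy+\gamma_k\,dz\to\eta$ with $\gamma(p)\neq0$, I may shrink the chart so that $\gamma,\gamma_k$ are bounded away from $0$ there, the chart radius is small, and the coordinate and Riemannian norms are comparable with a constant close to $1$; after mollifying I may also assume each $\eta_k$ smooth, since $\|\eta_k\wedge d\eta_k\|$, $\|d\eta_k\|$ and $\|\eta_k-\eta\|$ depend continuously on $\eta_k$ in the $C^1$ topology. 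An integral manifold of $\ker\eta_k$ is then a graph $z=w(x,y)$ with $\gamma_k w_x+\alpha_k=0$ and $\gamma_k w_y+\beta_k=0$ along the graph; the idea is to produce, for each $k$, a graph satisfying the first equation exactly and the second up to a small error, and to pass to the limit.

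For integrability (only the three-dimensional case needs proof) I first solve $\gamma_k(0,y,v_k)\,v_k'+\beta_k(0,y,v_k)=0$, $v_k(0)=0$, and then for each fixed $y$ solve $\gamma_k(x,y,w_k)\,\partial_x w_k+\alpha_k(x,y,w_k)=0$, $w_k(0,y)=v_k(y)$. The defect of the second tangency equation, $D_k:=\gamma_k(x,y,w_k)\,w_{k,y}+\beta_k(x,y,w_k)=\eta_k\big(\partial_y+w_{k,y}\partial_z\big)$, vanishes on $\{x=0\}$, and a direct computation using $w_{k,xy}=w_{k,yx}$ together with the first tangency equation shows $\partial_x D_k=c_kD_k+g_k$, where --- and this is the crux --- the transverse derivatives $\partial_z\alpha_k,\partial_z\beta_k,\partial_z\gamma_k$, which are a priori uncontrolled because they do not occur in $d\eta_k$, either cancel or reassemble into the coefficients of $d\eta_k$ and of $\eta_k\wedge d\eta_k$, so that $|c_k|\le C\|d\eta_k\|$ and $|g_k|\le C\|\eta_k\wedge d\eta_k\|$. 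Gronwall's inequality on the small chart then gives $\|D_k\|_\infty\le C\|\eta_k\wedge d\eta_k\|\,e^{\|d\eta_k\|}\to0$. Now $w_{k,x}=-\alpha_k/\gamma_k$ and $w_{k,y}=(D_k-\beta_k)/\gamma_k$ are bounded uniformly in $k$, so the $w_k$ are equi-Lipschitz and uniformly bounded; by the Arzel\`a--Ascoli theorem a subsequence converges uniformly to a Lipschitz $u$, and passing to the limit in $w_k(x,y)=w_k(0,y)-\int_0^x(\alpha_k/\gamma_k)(t,y,w_k)\,dt$ and $w_k(x,y)=w_k(x,0)+\int_0^y(D_k-\beta_k)\gamma_k^{-1}(x,s,w_k)\,ds$ --- using $\alpha_k/\gamma_k\to\alpha/\gamma$, $\beta_k/\gamma_k\to\beta/\gamma$ uniformly and $\|D_k\|_\infty\to0$ --- shows $u$ is $C^1$ with $\gamma u_x+\alpha=0$ and $\gamma u_y+\beta=0$, i.e.\ $z=u(x,y)$ is an integral manifold of $\Delta$ through $p$. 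In dimension two this is anyway classical.

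For unique integrability I use the second hypothesis $\|\eta_k-\eta\|e^{\|d\eta_k\|}\to0$, starting with $\dim M=2$. Two integral curves $y=\phi_1(x),\phi_2(x)$ through $p$ both solve $\gamma\phi'+\alpha=0$; I compare them with the (unique) solutions $\phi_i^k$ of the $C^1$ problem $\gamma_k\psi'+\alpha_k=0$, $\psi(0)=0$. The decisive estimate is that solutions of $\psi'=F_k(x,\psi)$, $F_k:=-\alpha_k/\gamma_k$, spread at a controlled rate: along any solution $\int_0^x\partial_y F_k(s,\psi(s))\,ds=\ln\gamma_k(0,\psi(0))-\ln\gamma_k(x,\psi(x))+\int_0^x\frac{\partial_x\gamma_k-\partial_y\alpha_k}{\gamma_k}\,ds$, whose modulus is at most $C(1+|x|\,\|d\eta_k\|)\le C+\|d\eta_k\|$ on the small chart --- the uncontrolled $\partial_y$-derivatives being absorbed into the potential $\ln\gamma_k=\ln\eta_k(\partial_y)$ and the remainder being a $d\eta_k$-coefficient. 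Via the flow/variational equation and $\|F-F_k\|_\infty\le C\|\eta_k-\eta\|_\infty$ this yields $\|\phi_i-\phi_i^k\|_\infty\le C\|\eta_k-\eta\|_\infty e^{\|d\eta_k\|}\to0$; since $\phi_1^k=\phi_2^k$ by uniqueness for the $C^1$ equation, $\phi_1\equiv\phi_2$. In dimension three, given two integral manifolds $z=u,\tilde u$ through $p$ (each, being $C^1$, a graph near $p$), their traces on $\{x=0\}$ are integral curves through a common point of the restricted line field $\ker(i^*\eta)$, $i\colon\{x=0\}\hookrightarrow M$, which is uniformly asymptotically involutive since $\|i^*\eta_k-i^*\eta\|e^{\|d(i^*\eta_k)\|}\le\|\eta_k-\eta\|e^{\|d\eta_k\|}\to0$; by the two-dimensional case $u(0,\cdot)=\tilde u(0,\cdot)=:\phi_0$. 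Solving $\gamma_k\,\partial_x w_k+\alpha_k=0$ with $w_k(0,\cdot)=\phi_0$, the same spreading estimate (for each fixed $y$) gives $\|u-w_k\|_\infty$ and $\|\tilde u-w_k\|_\infty$ at most $C\|\eta_k-\eta\|_\infty e^{\|d\eta_k\|}\to0$ --- the initial curves now match exactly, so only one exponential factor occurs --- and since $w_k$ depends only on $\eta_k$ and $\phi_0$ we conclude $u=\tilde u$ near $p$.

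The step I expect to be the main obstacle is the pair of algebraic facts underlying the estimates above: that in $\partial_x D_k=c_kD_k+g_k$ the transverse derivatives of the coefficients of $\eta_k$ --- which the hypotheses do not control --- cancel or reassemble into $d\eta_k$ and $\eta_k\wedge d\eta_k$, and that the solution-spreading for the $C^1$ equations can be rewritten with the uncontrolled transverse derivative absorbed into the potential $\ln\eta_k(\partial_z)$. It is precisely these facts that force the amplification factor to be $e^{\|d\eta_k\|}$ rather than something uncontrollable, and hence that the exponential weight in Definition~\ref{defasyminv} is built to exploit; everything else --- the mollification and rescaling bookkeeping, the Arzel\`a--Ascoli extraction, and the passages to the limit in the integral identities --- is routine once these are in hand.
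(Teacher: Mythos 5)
Your proposal is correct in substance, but it takes a genuinely different route from the paper's. The paper never works with the tangency defect of approximate graphs: for each \(k\) it perturbs the approximating \(C^2\) distribution into an \emph{exactly involutive} \(C^1\) distribution \(\mathrm{span}\{X_k+\alpha_k\,\partial/\partial x^3,\,Y_k\}\), with \(\alpha_k\) given by an explicit integral along the \(Y_k\)-flow and bounded by \(\|\eta_k\|\,\|\eta_k\wedge d\eta_k\|\,e^{\widetilde{d\eta}_k}\) (Theorem \ref{solution}); integrability then follows by applying the classical Frobenius theorem to these perturbed distributions and taking an Arzel\`a--Ascoli limit of the commuting-flow parametrizations (Proposition \ref{prop:int}); and uniqueness is obtained not by ODE comparison but by rescaling the restricted \(1\)-forms to \emph{closed} forms of controlled norm (Lemma \ref{unik}) and running a Stokes-theorem argument on the bigon bounded by two putative integral curves (Proposition \ref{uniq}). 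Your two ``crux'' cancellations are correct and are in fact the same computations that power the paper's bounds: the identity \(\partial_x D_k=c_kD_k+g_k\), with \(c_k\) a \(d\eta_k\)-coefficient divided by \(\gamma_k\) and \(g_k\) the \(\eta_k\wedge d\eta_k\)-coefficient, is in coordinates exactly what makes the paper's \(\alpha_k\) both involutivity-restoring and small (Sections \ref{sec:invol}--\ref{pertbounds}), and the absorption of the uncontrolled transverse derivative into \(\ln\eta_k(\partial_z)\) is precisely the computation of Lemmas \ref{bound2} and \ref{unik}. What each approach buys: yours is more elementary and self-contained (no involutive perturbation, no appeal to classical Frobenius, no Stokes argument) and proves Theorem \ref{Frobenius} directly; the paper's architecture, because its exponents are integrals of \(d\eta_k\) along the \(X_k\)-, \(Y_k\)-flows rather than sup norms, yields the stronger averaged Theorem \ref{Frobenius-weak} needed for the dynamical applications, which your sup-norm Gronwall estimates as written do not give (though they could likely be sharpened, since they too are integrals along characteristics). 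Two points that your sketch gestures at must be made explicit in a full write-up: the hypothesis carries the exponent \(\|d\eta_k\|\) with coefficient exactly \(1\) and is not invariant under multiplying \(\eta\) by large constants, so the chart-shrinking and normalization that keep \(|x|\cdot\sup(1/|\gamma_k|)\) and the metric-comparison constants at most \(1\) are essential (the paper's analogues are the normalizations \(c\ge 1\) and \(|t_x|\le 1/(\|X\|\,\|Y\|)\)); and the uniqueness comparison must go through the variational (Alekseev-type) formula with the flow derivative evaluated along \(F_k\)-solutions, since a naive Gronwall with \(\sup|\partial_yF_k|\) is uncontrolled and a two-step comparison would produce \(e^{2\|d\eta_k\|}\) --- your remark that ``only one exponential factor occurs'' is exactly the point that must be checked, and it does check out.
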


Notice  that the asymptotic involutivity condition does not apply to rank-1 distributions (or rather, is trivially satisfied in that case). Indeed, continuous rank-1 distributions are always integrable and in this case the relevant statement just concerns the uniqueness part.

The classical Frobenius Theorem \cite{Fro}  yields unique integrability for \(  C^{1}  \)  distributions (in arbitrary dimension)  under the assumption that \(  \Delta  \) is \emph{involutive}: \begin{equation}\label{eq:involutive} \eta\wedge d\eta=0. \end{equation}
 This can be seen as a special case of Theorem \ref{Frobenius} by choosing  \(  \eta_{k}\equiv \eta.  \)
Other proofs, generalizations and extensions of Frobenius' Theorem exist in the literature \cite{CenLacVer93, Han09, Han09b, CM, Hil00,   Lee, L, MonMor13, Sni07, Sus73}, related both to  the regularity of the distribution  and to the setting of the problem,
 including  generalizations to Lipschitz distributions \cite{MM, Ram07, S},  for which condition \eqref{eq:involutive} can  be formulated
 almost everywhere, and the interesting, though apparently not very well known, generalization of Hartman \cite{H, H2}
 to \emph{weakly differentiable} distributions, i.e.
distributions defined by a 1-form \(  \eta  \) which may not be differentiable or even Lipschitz but still admits
 continuous exterior derivative\footnote{More precisely we say that  \(  \eta  \)
is ``weakly differentiable'' if there exists a differential 2-form \(  d\eta  \) that satisfies Stokes' Formula: \( \int _{J} \eta = \int\int_{S} d\eta \) for every piece of \(  C^{1}  \) surface \(  \mathcal S  \)  bounded by a \(  C^{1}  \) piecewise Jordan curve \(  J  \). Note that this condition holds for example under the assumption that \(  \eta  \) is Lipschitz (and therefore differentiable almost everywhere) and is therefore strictly weaker than assuming that \(  \eta  \) is \(  C^{1}  \).} \(  d\eta  \)  and for which,  therefore, condition \eqref{eq:involutive}  can also still be formulated.

Our definition of asymptotic involutivity  allows for a significant relaxation of the assumptions on the regularity of \(  \Delta  \) and in particular  \emph{does not require} that \(  \eta  \) admit a continuous exterior derivative \( d\eta \). 
 Indeed, necessary and sufficient conditions for the existence of \( d\eta \) is given by the following Lemma due to Hartman. 

 \begin{lemma}[\cite{H}]
A continuous differential 1-form \( \eta \) admits a continuous exterior derivative \(  d\eta  \) if and only if there exists  a sequence of \(  C^{1}  \) differential 1-forms \(  \eta_{k}  \) such that
 \(   \eta_{k}\to \eta \) and \(   d\eta_{k} \to d\eta  \). 
 \end{lemma}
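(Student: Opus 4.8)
The plan is to prove the two implications separately: the easy ``if'' direction by passing to the limit in Stokes' formula, and the harder ``only if'' direction by mollification.

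For the ``if'' direction, suppose $\eta_k\to\eta$ in $C^0$ with each $\eta_k$ of class $C^1$ and $d\eta_k\to\omega$ in $C^0$ for some continuous $2$-form $\omega$. Fix any piece of $C^1$ surface $\mathcal S$ bounded by a $C^1$ piecewise Jordan curve $J$. Classical Stokes' theorem applies to each $C^1$ form, giving $\int_J\eta_k=\iint_{\mathcal S}d\eta_k$. Since $J$ and $\mathcal S$ are compact, uniform convergence lets me pass to the limit on both sides, yielding $\int_J\eta=\iint_{\mathcal S}\omega$. Hence $\omega$ is an exterior derivative of $\eta$ in the stated (Stokes) sense, so $d\eta:=\omega$ exists and is continuous.

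For the ``only if'' direction, assume $\eta$ admits a continuous $d\eta$. Since both hypothesis and conclusion are local, I would first reduce to a chart by a partition of unity $\{\psi_\alpha\}$: if local approximants $\eta^{\alpha}_k\to\eta$, $d\eta^{\alpha}_k\to d\eta$ are available on each chart, then $\eta_k:=\sum_\alpha\psi_\alpha\eta^{\alpha}_k$ works, because $\sum_\alpha d\psi_\alpha=0$ annihilates the cross terms $d\psi_\alpha\wedge\eta^{\alpha}_k$ in the limit while $\sum_\alpha\psi_\alpha\equiv1$ keeps $\sum_\alpha\psi_\alpha d\eta^{\alpha}_k\to d\eta$. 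So it suffices to produce approximants in $\mathbb R^n$. I would take a standard mollifier $\rho_\epsilon$ and set $\eta_\epsilon:=\eta*\rho_\epsilon$ (convolving each coefficient): then $\eta_\epsilon$ is $C^\infty$ and $\eta_\epsilon\to\eta$ uniformly on compacts by continuity of $\eta$. The crux is the identity
\[
d(\eta*\rho_\epsilon)=(d\eta)*\rho_\epsilon,
\]
i.e.\ that mollification commutes with the weak exterior derivative; granting it, $(d\eta)*\rho_\epsilon\to d\eta$ uniformly on compacts and one takes $\eta_k=\eta_{1/k}$.

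Proving this commutation identity is the main obstacle, precisely because $\eta$ is not even Lipschitz, so one cannot integrate by parts inside the convolution; instead I would route through Stokes applied to translates of $\eta$. Writing $\eta^{(z)}=T_z^*\eta$ for the translation $T_z(x)=x-z$, one has $\eta*\rho_\epsilon=\int\eta^{(z)}\rho_\epsilon(z)\,dz$ as a form-valued integral. For a small $C^1$ surface $\mathcal S$ with $C^1$ piecewise Jordan boundary $J$, Fubini (legitimate by continuity of $\eta$ and compact support of $\rho_\epsilon$) gives $\int_J(\eta*\rho_\epsilon)=\int\bigl(\int_J\eta^{(z)}\bigr)\rho_\epsilon(z)\,dz$; applying the Stokes hypothesis to the translated surface $T_z\mathcal S$ rewrites $\int_J\eta^{(z)}=\int_{\partial(T_z\mathcal S)}\eta=\iint_{T_z\mathcal S}d\eta=\iint_{\mathcal S}(d\eta)^{(z)}$, and a second Fubini yields $\int_J(\eta*\rho_\epsilon)=\iint_{\mathcal S}(d\eta)*\rho_\epsilon$. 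Thus the continuous $2$-form $(d\eta)*\rho_\epsilon$ satisfies Stokes' formula for the \emph{smooth} form $\eta*\rho_\epsilon$; comparing with classical Stokes for $\eta*\rho_\epsilon$ and shrinking $\mathcal S$ to a point (using continuity of both integrands) forces $(d\eta)*\rho_\epsilon=d(\eta*\rho_\epsilon)$, which completes the argument.
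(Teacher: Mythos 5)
The paper does not prove this lemma at all --- it is quoted directly from Hartman's book \cite{H} --- so there is no in-paper argument to compare against; judged on its own, your proof is correct and is essentially the standard (Hartman-style) argument. The ``if'' direction by passing to the limit in Stokes' formula over a fixed compact surface is fine, and in the ``only if'' direction you correctly identify the one genuinely delicate point, namely the commutation \( d(\eta*\rho_\epsilon)=(d\eta)*\rho_\epsilon \), and your route through translated surfaces, Fubini, and the shrinking-surface argument (applied to small coordinate squares, for surfaces whose \(\epsilon\)-neighbourhood stays in the chart) settles it; the partition-of-unity globalization is also sound, though strictly speaking unnecessary since the paper only ever uses the statement in a local chart.
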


Our conditions  \( \|\eta_k\wedge d\eta_k\|e^{\|d\eta_k\|}\to 0  \) and \(
 \|\eta_{k}-\eta\| e^{\|d\eta_k\|} \to 0  \) \emph{do not require} \( d\eta_{k} \) to converge
and are thus strictly weaker than any condition which requires the existence of a continuous exterior derivative. In Remark \ref{rem:pde} we give an example where the approximating exterior derivatives cannot converge but where our conditions are satisfied.

One of the main reasons why we are restricted to lower dimensions and codimensions in this
paper is that the techniques we use requires us to locally solve some linear PDE`s.
In case of higher dimensions these become systems of linear PDEs where a solution is not generally possible.
We believe that some generalization of these result to higher dimensions and codimensions should be possible but most likely using completely different techniques.

\begin{rem}
An alternative, and fairly standard, formulation of the Theorem of Frobenius is the following: \emph{a \( C^{1} \) distribution is integrable if and only if it can be described by vector fields whose flows commute}.  One of the difficulties in generalizing this statement to our setting is that, for continuous distributions, integrability and unique integrability become distinct issues, whereas in the \( C^{1} \) setting, integrability automatically implies unique integrability. As far as integrability is concerned, this alternative formulation of Frobenius' Theorem does come into play in our proofs. Indeed, we consider the \( C^{1} \) approximating distributions defined by the approximating \( C^{1} \) forms \( \eta_{k} \) and use flows associated to vector fields which span these distributions to construct approximating surfaces and our main technical result, Theorem \ref{solution} below, is a quantitative estimate of how ``non-involutive'' these surfaces are, or equivalently, how close the vector fields are from being commutative. For the uniqueness part, we do use a form of this alternative formulation: in Proposition \ref{thm:uniqueness} we show that our distribution is locally spanned by two uniquely integrable vector fields and then use this fact to deduce the unique integrability of the distribution. 
\end{rem}

 \subsection{Asymptotic involutivity on average} \label{sec:ave}
 A key feature of the asymptotic involutivity condition  is that it allows   \(  \|d\eta_{k}\|  \) to blow up, as
 should be expected to happen in the case of the approximation of distributions which are not Lipschitz, albeit at
 some controlled rate. Our argument however yields some rather technical, but significantly more general, conditions
 which relax to some extent the requirement on the rate at which \(  \|d\eta_{k}\|  \) is allowed to blow up, and
 instead  only require some control on the rate at which the ``average'' value of \(  \|d\eta_{k}\|  \) blows up. For the applications that we have in mind, in particular dynamical systems, these kind of conditions will be more possible to verify as opposed to conditions given in the last section.

In the case we are working with a rank $2$ distribution on a three dimensional manifold, we fix some arbitrary point \(  x_{0}\in M  \) and  a local coordinate system $(x^1, x^{2}, x^3, \mathcal{U})$ around $x_0$. We suppose we are given a continuous form \(  \eta  \)  defined in \(  \mathcal U  \) and the corresponding distribution \(  \Delta=ker(\eta)  \), and assume without loss of generality that \(  \Delta  \) is everywhere transversal to  the coordinate axis  \(  \partial/\partial x^{3}  \). For any sequence of \(  C^{1}  \) forms  \(   \eta_{k}  \) defined in \(  \mathcal U  \) we write the corresponding exterior derivative \(  d\eta_{k}  \)  in coordinates as
 $$
 d\eta_k=d\eta_{k,1}dx^1\wedge dx^3+d\eta_{k,2}dx^2\wedge dx^3+d\eta_{k,3}dx^1\wedge dx^2
 $$
 where \(  d\eta_{k, 1}, d\eta_{k, 2}, d\eta_{k,3}  \) are \(  C^{1}  \) functions defined in \(  \mathcal U  \).
If \(  \eta_{k}\to \eta  \) then, for all \(  k  \) sufficiently large, the corresponding distributions \(  \Delta_{k}=ker (\eta_{k})  \) are also transversal to the coordinate axis \(  \partial/\partial x^{3}  \) and therefore there exist \(  C^{1}  \) frames \(  \{X_{k}, Y_{k}\}  \) for \(  \Delta_{k}  \) in \(  \mathcal U  \) where \(  X_{k}, Y_{k}  \) are \(  C^{1}  \) vector fields of the form \begin{equation}\label{XkYk} X_k=\frac\partial{\partial x^1}+a_k\frac{\partial}{\partial x^3}, \qquad Y_k=\frac{\partial}{\partial x^2}+b_k\frac{\partial}{\partial x^3} \end{equation}
 for some $C^1$ functions $a_k, b_k$. We let $e^{\tau X_k}, e^{\tau Y_k}$ denote the flows induced by the vector
 fields  $X_k, Y_k$ respectively. Fixing some smaller neighbourhood \(  \mathcal U'\subset \mathcal U  \) we can
 choose \(  t_{0}>0  \) such that  the flow is well defined and \(e^{\tau X_k}(x), e^{\tau Y_k}(x)\in \mathcal U  \)
 for all \(  x\in \mathcal U'  \) and  \(  |\tau|\leq t_{0}  \). Then, for every \(  x\in \mathcal U', |t|\leq t_{0}
 \)  we define
 \[
\widetilde{d\eta}_{k,1}(x,t):= \int_0^{t} d\eta_{k,1}\circ e^{\tau X^{(k)}}(x)d\tau, \] \[ \widetilde{d\eta}_{k,2}(x,t):= \int_0^{t} d\eta_{k,2}\circ e^{\tau Y^{(k)}}(x) d\tau \] and \[ \widetilde{d\eta}_k(x,t):=\max \{\widetilde{d\eta}_{k,1}(x,t), \widetilde{d\eta}_{k,2}(x,t)\}. \]

In the case when we have a rank $1$ distribution on a two dimensional manifold then we have coordinates $(x^1,x^2)$
a single vector-field $X^k$ spanning the distribution and only  $\widetilde{d\eta}_k(x,t):=\widetilde{d\eta}_{k,1}(x,t)$ (since there is no $Y^k$ flow, there is no $\widetilde{d\eta}_{k,2}(x,t)$). Everything else remains the same.

 \begin{defn}\label{defk}
 A continuous distribution $\Delta=ker(\eta)$   is \textit{asymptotically involutive on average} if, for every $x_0\in
 M$,
 there exist local coordinates around \(  x_{0}  \) and a sequence of $C^1$ differential 1-forms $\eta_k$ with \(
 \eta_{k}\to \eta  \) and corresponding \(  C^{1}  \) distributions \(  \Delta_{k}=ker(\eta_{k})  \) and \(  C^{1}  \)
 local frames \(  \{X_{k}, Y_{k}\}  \) (or just \(  \{X_{k}\}  \)), and a neighbourhood \(  \mathcal U'\subset \mathcal U  \) such that for every
 \(  x\in \mathcal U'  \) and every \( |t|\leq t_{0} \)
 \[
\|\eta_k\wedge d\eta_k\|_{x}e^{\widetilde{d\eta}_{k}(x,t)}\to 0 \]
  as \(  k\to \infty  \). \(  \Delta  \) is \emph{uniformly asymptotically involutive on average} if, moreover, for
  every \(  x\in \mathcal U'  \) and every \( |t|\leq t_{0} \),
\[ \|\eta_k-\eta\|_{x} e^{\widetilde{d\eta}_{k}(x,t)} \to 0 \]
 as \(  k\to \infty  \).
\end{defn} 

\begin{thm}\label{Frobenius-weak} Let \(  \Delta   \) be a corank 1 distribution on a manifold of dimension 2 or $3$. If \(  \Delta  \) is asymptotically involutive on average then it is integrable. If $\Delta$ is  uniformly asymptotically involutive on average
 then it is uniquely integrable.
 \end{thm}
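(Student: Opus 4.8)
The plan is to re-run, localized at an arbitrary point $x_0\in M$, exactly the argument that proves Theorem~\ref{Frobenius}, but to replace the crude bound $\|d\eta_k\|$ by the averaged-along-flow quantities $\widetilde{d\eta}_k(x,t)$. Fix $x_0$ together with the coordinates, the $C^1$ forms $\eta_k\to\eta$, the distributions $\Delta_k=\ker(\eta_k)$, the $C^1$ frames $\{X_k,Y_k\}$ of the form \eqref{XkYk} (or just $\{X_k\}$ in dimension $2$), the neighbourhood $\mathcal U'$ and the time $t_0$ provided by Definition~\ref{defk}. Using the flows $e^{\tau X_k},e^{\tau Y_k}$ one forms the candidate integral surfaces $\Phi_k(s,t):=e^{tY_k}\circ e^{sX_k}(x_0)$ (resp.\ the integral curves $s\mapsto e^{sX_k}(x_0)$). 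Then $\partial_t\Phi_k=Y_k\circ\Phi_k\in\Delta_k$, while $\partial_s\Phi_k=(e^{tY_k})_*X_k$ fails to lie in $\Delta_k$ only by a vector controlled by the non-commutativity $[X_k,Y_k]$, which in the coordinates \eqref{XkYk} is a scalar multiple of the coefficient of $\eta_k\wedge d\eta_k$. Hence $\Phi_k$ is an ``approximately integral'' surface for $\Delta_k$, and for $\Delta$ up to the error $\|\eta_k-\eta\|$; the whole content is to make this quantitative.

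The one genuinely new point relative to Theorem~\ref{Frobenius} is the following. In the quantitative non-integrability estimate for $\Phi_k$ (the forthcoming Theorem~\ref{solution}) the first derivatives of the $\eta_k$ enter \emph{only} through Gr\"onwall-type inequalities run along the flow lines $\tau\mapsto e^{\tau X_k}(x)$ and $\tau\mapsto e^{\tau Y_k}(x)$: the transverse ($\partial/\partial x^3$) component of a vector transported by $e^{tY_k}$ obeys a linear ODE whose coefficient is $d\eta_{k,2}=\partial b_k/\partial x^3$ evaluated along the flow, and symmetrically for $e^{sX_k}$ with $d\eta_{k,1}=\partial a_k/\partial x^3$. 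I would therefore establish the refined version of that estimate in which these coefficients are kept under the integral sign rather than bounded by their $C^0$ norms; by the very definition of $\widetilde{d\eta}_{k,1},\widetilde{d\eta}_{k,2}$ and $\widetilde{d\eta}_k=\max\{\widetilde{d\eta}_{k,1},\widetilde{d\eta}_{k,2}\}$ this turns every amplification factor $e^{\|d\eta_k\|t_0}$ the construction produces into $e^{\widetilde{d\eta}_k(x,t)}$ (up to a harmless bounded power, and the constant $t_0$, which we absorb by shrinking $\mathcal U'$ so that $t_0\le 1$). The outcome is a bound, valid near $x_0$, of the form: the plane $T_{\Phi_k(s,t)}\Phi_k$ differs from $\Delta$ by at most $C\,\|\eta_k\wedge d\eta_k\|_{x}\,e^{\widetilde{d\eta}_k(x,t)}+C\,\|\eta_k-\eta\|$.

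For the integrability statement, asymptotic involutivity on average forces the first term to $0$; combining this with the uniform $C^1$ bounds on $\Phi_k$ coming from transversality (boundedness of $a_k,b_k$ and of the flow times), Arzel\`a--Ascoli gives a $C^1$ limit surface $N\ni x_0$, and the tangent-plane bound passes to the limit to give $T_pN=\Delta_p$; as $x_0$ was arbitrary, $\Delta$ is integrable. For the unique integrability statement one additionally has $\|\eta_k-\eta\|_x e^{\widetilde{d\eta}_k(x,t)}\to 0$, and one argues as in Proposition~\ref{thm:uniqueness}: given any integral curve $\gamma$ of $\Delta$ through $x_0$, compare it with $\tau\mapsto e^{\tau X_k}(x_0)$; the difference satisfies a Gr\"onwall inequality whose source is $\le C\|\eta_k-\eta\|$ and whose amplification along the flow line is $e^{\widetilde{d\eta}_k}$, hence the difference tends to $0$ and $\gamma$ must coincide with $\lim_k e^{\tau X_k}(x_0)$, a limit independent of $\gamma$; likewise in the $Y$-direction. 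Thus the limiting coordinate fields $X=\lim_k X_k$ and $Y=\lim_k Y_k$ are uniquely integrable, and Proposition~\ref{thm:uniqueness} upgrades this to unique integrability of $\Delta$. Note that Theorem~\ref{Frobenius} is recovered here as a special case: once $t_0\le 1$ we have $\widetilde{d\eta}_k(x,t)\le\|d\eta_k\|$, so (uniform) asymptotic involutivity implies (uniform) asymptotic involutivity on average.

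The step I expect to be the main obstacle is the sharp bookkeeping in the refined non-integrability estimate: one must check that the exponential amplification the construction actually generates is exactly $e^{\widetilde{d\eta}_k(x,t)}$ --- with the signed integral (intervals on which $d\eta_{k,i}$ is negative only help) and with the $\max$ over the two flow directions --- and not, say, $e^{2\widetilde{d\eta}_k}$ or the original $e^{\|d\eta_k\|t_0}$, since even a single stray unbounded factor of that kind would defeat the hypothesis. This forces one to integrate $d\eta_{k,1}$ and $d\eta_{k,2}$ along precisely the flow segments, and in the order, dictated by the parametrization of $\Phi_k$, and to track which direction dominates at each stage. A secondary, more routine difficulty is that Definition~\ref{defk} only gives convergence pointwise in $(x,t)$, so the passage to the limit must be organised so that it uses the estimate only along the fixed compact flow segments that appear in $\Phi_k$; everything else is a direct transcription of the proof of Theorem~\ref{Frobenius}.
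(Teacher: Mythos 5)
Your integrability half is essentially the paper's own argument in different packaging. The paper isolates the flow-line Duhamel/Gr\"onwall computation you describe as an explicit perturbation: it solves $Y_k(u)=h_k+u\,\partial b_k/\partial x^3$ along $Y_k$-orbits to produce a function $\alpha_k$ such that $\mathrm{span}\{X_k+\alpha_k\,\partial/\partial x^3,\,Y_k\}$ is involutive, with $\|\alpha_k\|\le\|\eta_k\|\,\|\eta_k\wedge d\eta_k\|\,e^{\widetilde{d\eta}_k}$ (Theorem \ref{solution}; Lemma \ref{bound2} performs exactly the bookkeeping you anticipate, keeping $d\eta_{k,2}$ under the integral along the flow and absorbing the $Y_k(c_k)/c_k$ terms by telescoping $\log c_k$), and then passes to the limit of the integral surfaces of these involutive perturbations (Proposition \ref{prop:int}). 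Your surfaces $\Phi_k$ are precisely those integral surfaces, so this half is sound and not genuinely different from the paper; the pointwise-versus-uniform issue you flag is shared by both versions.

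The genuine gap is in the uniqueness half. Your Gr\"onwall comparison of an arbitrary integral curve $\gamma$ of $X$ with $\tau\mapsto e^{\tau X_k}(x_0)$ produces an amplification factor $\exp\bigl(\int\partial a_k/\partial x^3\bigr)$ in which the coefficient is evaluated at mean-value points lying \emph{between} the two curves, whereas $\widetilde{d\eta}_{k,1}(z,t)$ only controls $d\eta_{k,1}$ integrated along genuine $X_k$-orbit segments (and even there $d\eta_{k,1}=X_k(c_k)+c_k\,\partial a_k/\partial x^3$, so identifying the Gr\"onwall coefficient with $d\eta_{k,1}$ uses the $X_k(c_k)/c_k$ term, which telescopes only along $X_k$-orbits). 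Since $\|d\eta_k\|$ is allowed to blow up and no modulus of continuity of $d\eta_k$ is assumed, there is no way to transfer the orbit average to nearby points, and arguing that the two curves are close is circular, since their closeness is what Gr\"onwall is supposed to deliver. The claimed amplification $e^{\widetilde{d\eta}_k}$ is therefore exactly one of the ``stray unbounded factors'' you warn against. The paper avoids two-point estimates altogether: Lemma \ref{unik} rescales $w_k$ to a \emph{closed} form $\hat w_k$ with $\ker\hat w_k=\ker w_k$ and $\|\hat w_k\|_z\le \sup_{|t|\le t_0}e^{\widetilde{dw}_k(z,t)}\|w_k\|_z$, and then Stokes' theorem is applied to the loop bounded by the two putative integral curves and a transversal: the loop integral vanishes, the transversal contributes a fixed nonzero amount, and the curve contributions are at most $s_i\|\hat w_k\|_y\|w-w_k\|_y\to0$, with every evaluation taking place either on the putative curves or along $X_k$-orbits through their points --- precisely what the averaged hypothesis controls. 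You would need to replace your Gr\"onwall step with an argument of this kind, or else supply the missing transfer from orbit averages to two-curve comparisons, which the stated hypotheses do not appear to permit. A minor point: the upgrade from unique integrability of $X$ and $Y$ to unique integrability of $\Delta$ is the short contradiction argument in Section \ref{strategy}, not Proposition \ref{thm:uniqueness} itself, which is the statement your Gr\"onwall step is meant to establish.
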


Notice that \(  \widetilde{d\eta}_{k}(x,t) \leq \|d\eta_{k}\| \) and therefore Theorem \ref{Frobenius} follows immediately from Theorem \ref{Frobenius-weak}. 

We conclude this section with a question motivated by the observation that in the \(  C^{1}  \) setting the involutivity condition \eqref{eq:involutive} is both necessary and sufficient for unique integrability. It seems natural to ask whether the same is true for  uniform asymptotic involutivity on average.

\begin{Question} Let \(  \Delta  \) be a 2-dimensional continuous uniquely integrable distribution on a 3-dimensional  manifold. Is \(  \Delta  \)  uniformly  asymptotically involutive on average? \end{Question}

\subsection{Applications} We discuss here three applications of our results: to the problem of the uniqueness of solutions of ODE's, of existence and uniqueness of solutions of PDE's, and to the problem of integrability of invariant bundles in Dynamical Systems. While none of these applications perhaps has the status of a major result in itself, we believe they are good ``examples'' and indicate the potential applicability of our main integrability results to a wide range of problems in different areas of mathematics. The ODE and PDE applications follow from Theorem \ref{Frobenius} while the application to Dynamical Systems require the greater generality of Theorem \ref{Frobenius-weak}.

\subsubsection{Uniqueness of solutions for ODE's} We consider a vector field \begin{equation}\label{ODE} X=f(x) \end{equation} defined in some local chart \( \mathcal U \) of a two-dimensional Riemannian manifold \( M \) by a non-vanishing continuous function \( f \). By a classical result of Peano, \(  X  \) admits locally defined  integral curves at every point in \( \mathcal U \) but  uniqueness is not guaranteed as there exist simple counterexamples even if \( f \) is H\"older continuous. A natural question concerns the ``weakest'' form of continuity which guarantees uniqueness. We recall that the \emph{modulus of continuity} of a  continuous function \( f \) defined on \( \mathcal U \) is a continuous function \( w: [0, \infty) \to [0, \infty) \) such that \( w(t)\to 0 \) as \( t\to 0 \) and, for all \( x,y \in \mathcal U \), \[ |f(x)-f(y)| \leq w(|x-y|). \] 

\begin{thm}\label{thm-ODE} Suppose the modulus of continuity of \( f \) satisfies \begin{equation}\label{ODEstar} \lim_{\epsilon \rightarrow 0}\frac 1\epsilon \int_{0}^{\epsilon}\omega(t) dt \cdot \exp\left(\frac 1{\epsilon^{2}}\int_{0}^{\epsilon}\omega(t)dt \right)=0. \end{equation} Then  \( X \) has a unique local integral curve through every point  in $\mathcal  U$. \end{thm}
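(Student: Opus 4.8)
The strategy is to reduce the uniqueness of integral curves of $X=f(x)$ to an application of Theorem \ref{Frobenius}, by showing that the modulus-of-continuity hypothesis \eqref{ODEstar} implies that the rank-1 distribution $\Delta=\langle X\rangle$ on the surface $M$ is \emph{uniformly asymptotically involutive}. First I would write $\Delta=\ker(\eta)$ for a continuous 1-form $\eta$ obtained by rotating $f$ by $\pi/2$; since $f$ is non-vanishing and has modulus of continuity $\omega$, so does $\eta$ (up to a harmless constant factor from the normalization and the rotation, which only rescales $\omega$ and does not affect a limit of the form \eqref{ODEstar}).

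The core of the argument is a quantitative mollification estimate. Fix a smooth mollifier $\phi$ supported in the unit ball and set $\eta_k=\eta*\phi_{\epsilon_k}$ with $\epsilon_k\to 0$; these are $C^1$ (indeed $C^\infty$) 1-forms with $\eta_k\to\eta$ uniformly. The two standard convolution inequalities I would invoke are: (i) $\|\eta_k-\eta\|\lesssim \sup_{|h|\le\epsilon_k}\|\eta(\cdot+h)-\eta\|\lesssim\omega(\epsilon_k)$, and more usefully the averaged bound $\|\eta_k-\eta\|\lesssim \tfrac{1}{\epsilon_k}\int_0^{\epsilon_k}\omega(t)\,dt$ if one uses that $\omega$ may be taken non-decreasing; and (ii) $\|d\eta_k\|=\|d(\eta*\phi_{\epsilon_k})\|=\|\eta*d\phi_{\epsilon_k}\|\lesssim \tfrac{1}{\epsilon_k}\cdot\tfrac{1}{\epsilon_k}\int_0^{\epsilon_k}\omega(t)\,dt=\tfrac{1}{\epsilon_k^2}\int_0^{\epsilon_k}\omega(t)\,dt$, where the key point is that differentiating the kernel produces a factor $\epsilon_k^{-1}$, and pairing $d\phi_{\epsilon_k}$ (which integrates to zero) against $\eta$ lets one replace $\eta$ by $\eta$ minus its local average, gaining a factor $\omega(\epsilon_k)$ or the integral average of $\omega$. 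Since $M$ is $2$-dimensional, $\eta_k\wedge d\eta_k$ is a $3$-form on a surface, hence identically zero, so $\|\eta_k\wedge d\eta_k\|=0$ trivially and the condition $\|\eta_k\wedge d\eta_k\|e^{\|d\eta_k\|}\to 0$ is automatic. It remains only to check the uniform condition $\|\eta_k-\eta\|e^{\|d\eta_k\|}\to 0$, and by the two estimates above this is bounded by
\[
C\cdot\frac{1}{\epsilon_k}\int_0^{\epsilon_k}\omega(t)\,dt\cdot\exp\!\left(C\cdot\frac{1}{\epsilon_k^2}\int_0^{\epsilon_k}\omega(t)\,dt\right),
\]
which tends to $0$ as $\epsilon_k\to 0$ precisely by hypothesis \eqref{ODEstar} (the constant $C$ inside the exponential is absorbed by shrinking $\epsilon_k$, or by noting that \eqref{ODEstar} is stable under such a constant since $\tfrac{1}{\epsilon^2}\int_0^\epsilon\omega\to\infty$ would only make things worse, and if it stays bounded the exponential is harmless while the prefactor still $\to 0$).

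Having verified uniform asymptotic involutivity, Theorem \ref{Frobenius} gives that $\Delta$ is uniquely integrable; its unique integral manifolds are $1$-dimensional, and a curve is an integral manifold of $\langle X\rangle$ exactly when it is an integral curve of $X$ (up to reparametrization), so $X$ has a unique integral curve through each point of $\mathcal U$. The main obstacle I anticipate is not conceptual but bookkeeping: one must be careful that the constant $C$ arising in the convolution estimates for $\|d\eta_k\|$ genuinely sits \emph{inside} the exponential, so one needs hypothesis \eqref{ODEstar} to be robust under replacing $\omega$ by $C\omega$ — equivalently, one should either state \eqref{ODEstar} with such a constant built in, or observe that the limit in \eqref{ODEstar} forces $\tfrac1{\epsilon^2}\int_0^\epsilon\omega(t)\,dt$ to not grow too fast and handle the two regimes (bounded vs.\ unbounded) separately as indicated above. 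A secondary technical point is passing from the rotated/normalized form $\eta$ back to $f$ while controlling the modulus of continuity, but since $f$ is bounded away from $0$ on a compact neighbourhood this is routine.
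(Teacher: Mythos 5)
Your overall route is the same as the paper's: normalize the line field, mollify, use the two bounds \( \|\eta_k-\eta\|\lesssim\frac1{\epsilon_k}\int_0^{\epsilon_k}\omega \) and \( \|d\eta_k\|\lesssim\frac1{\epsilon_k^2}\int_0^{\epsilon_k}\omega \) (these are exactly Proposition \ref{prop:approx}, proved there by the same kernel manipulations you sketch), note that \( \eta_k\wedge d\eta_k\equiv0 \) on a surface, and conclude by Theorem \ref{Frobenius}. The genuine gap is exactly the point you flag and then dismiss: the constant \( C \) inside the exponential is \emph{not} harmless, and neither of your remedies works. Shrinking \( \epsilon_k \) changes nothing, since your majorant is a function of \( \epsilon_k \) alone; and the bounded/unbounded dichotomy fails in the only interesting regime, namely when \( \frac1{\epsilon^2}\int_0^\epsilon\omega \) tends to infinity slowly. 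Concretely, take \( \omega(t)=t\ln(1/t) \), one of the paper's own motivating examples: then \( \frac1\epsilon\int_0^\epsilon\omega\sim\frac\epsilon2\ln\frac1\epsilon \) and \( \frac1{\epsilon^2}\int_0^\epsilon\omega\sim\frac12\ln\frac1\epsilon \), so the quantity in \eqref{ODEstar} is \( \sim\frac12\sqrt\epsilon\,\ln\frac1\epsilon\to0 \) and the hypothesis holds, while your bound \( \frac C\epsilon\int_0^\epsilon\omega\cdot\exp\bigl(\frac C{\epsilon^2}\int_0^\epsilon\omega\bigr)\sim\frac C2\,\epsilon^{1-C/2}\ln\frac1\epsilon \) diverges for any \( C>2 \). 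Hence \eqref{ODEstar} does not imply \( \|\eta_k-\eta\|e^{\|d\eta_k\|}\to0 \) through the crude estimate \( \|d\eta_k\|\le\frac C{\epsilon_k^2}\int_0^{\epsilon_k}\omega \): the hypothesis is not robust under replacing \( \omega \) by \( C\omega \) at the level of the exponent, so the last limit in your proposal does not follow from the stated assumption.

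The paper's proof differs from yours precisely at this step: in its final inequality only \( \epsilon\,|dw^{\epsilon}|_{\infty}\le\frac K\epsilon\int_0^\epsilon\omega(t)\,dt \) is placed in the exponential, i.e.\ it works with the averaged, short-flow-time exponent in the spirit of \( \widetilde{dw} \) and Theorem \ref{Frobenius-weak}, rather than with the sup norm \( \|dw^\epsilon\| \) appearing in Definition \ref{defasyminv}. With that smaller exponent the multiplicative constant is dominated by \( \frac1{\epsilon^2}\int_0^\epsilon\omega \) for small \( \epsilon \), and \eqref{ODEstar} suffices. To close your argument you must either gain this factor of (roughly) \( \epsilon \) in the exponent — that is, verify the ``on average'' condition instead of plain uniform asymptotic involutivity — or restate \eqref{ODEstar} in a form that tolerates a constant multiplying \( \frac1{\epsilon^2}\int_0^\epsilon\omega \); as written, the deduction is incomplete.
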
 It would be interesting to compare our condition \eqref{ODEstar} with other conditions  such as the classical and the well-known \emph{Osgood} condition \begin{equation}\label{osgood} \int_{0}^{\epsilon} \frac{1}{w(t)}dt = \infty \end{equation} which also  implies unique integrability \cite{Osg98} . For the moment however we have not been able to establish a relationship between the two conditions. \begin{question} Does either of \eqref{ODEstar} or \eqref{osgood} imply the other? Are there examples of functions which satisfy one and not the other? \end{question} We remark that any function which admits a modulus of continuity also admits an \emph{increasing} modulus of continuity \(  \hat w(t) \geq \omega(t)  \). Then if \eqref{ODEstar} holds for the modulus of continuity \(  \omega(t)  \) it clearly holds also for the increasing modulus of continuity \(  \hat\omega(t)  \) (but notice that the converse is not true). If the modulus of continuity is increasing, then   \eqref{ODEstar} is equivalent to the following more natural-looking condition \begin{equation}\label{ODEstarb}
  \lim_{t \rightarrow 0}\omega(t)e^{\frac{\omega(t)}{t}}=0.
  \end{equation}
It is easy to check that Lipschitz functions satisfy \eqref{ODEstarb} (and therefore also \eqref{ODEstar}) as well as \eqref{osgood},  as do some standard non-Lipschitz functions such as \( w(t) = t\ln t, w(t) =t\ln...\ln t \) and more ``exotic'' examples such as \(  w(t) = t\ln^{1+t}t  \),  whereas functions such as \(  w(t)=t^{\alpha}, \alpha\in (0,1)  \) and \(  w(t) = t\ln^{\alpha} t, \alpha>1  \) satisfy neither our condition \eqref{ODEstar} nor \eqref{osgood}.  It seems likely that if there exists any example of a modulus of continuity which satisfies \eqref{ODEstar} but not \eqref{osgood} it would have to have some significant amount of oscillation which could be controlled by the integrals in \eqref{ODEstar} but not by the simpler condition \eqref{ODEstarb}.

\begin{rem}
We remark that the result of Hartman in \cite{H} does not immediately give uniqueness for such vector-fields. Indeed consider the vector-field $f(x,y)=\frac{\partial}{\partial y} + xln(x)\frac{\partial}{\partial x}$ on $U \subset \mathbb{R}^2$. Then it is nullifier
is given by $\eta = xln(x)dy - dx$. If one approximates this 1-form by $C^1$ forms $\eta_k = f^k(x)dy -dx$
then $d\eta_k = \frac{df^k}{dx}dx\wedge dy$. Yet it is not possible for $d\eta_k$ to converge to a continuous differential form since $|\frac{df^k}{dx}|$ is not even bounded as $f^k(x)$ converges to a non-Lipschitz function.
\end{rem}

\subsubsection{Pfaff Equations} Besides the intrinsic interest of the question of integrability from a purely geometric point of view, the issue  of integrability classically arises in the context of the problem of existence and uniqueness of solutions of PDE's. Indeed, this seems to have been the main motivation of  Frobenius \cite{Fro}, who  applied previous results of Clebsch \cite{Cle} and Deahna \cite{Dea}, see discussion in \cite{Law}, to Pfaff equations, i.e. equations of the form \begin{equation}\label{Fro}\tag{\ensuremath{\mathcal P}} \begin{cases}
 \displaystyle{
 \frac{\partial f}{\partial x}(x,y)=a(x,y,f(x,y))
 }
 \\[1em]
 \displaystyle{
 \frac{\partial f}{\partial y}(x,y)=b(x,y,f(x,y))
} \end{cases} \end{equation} where $a(x,y,z), b(x,y,z)$ are   scalar functions defined on $\mathcal U = \mathcal{V}\times I\subset\mathbb{R}^3$. When $f=f(x,y)$ exists (and is unique), with the initial condition \(f(x_0,y_0)=z_0 \), the system \eqref{Fro} is said to be \emph{(uniquely) integrable} at $(x_0,y_0,z_0)$.

 The existence and uniqueness of the Pfaff system of equations clearly depends on the properties of the functions \(
 a  \) and \(  b  \). The classical Theorem of Frobenius gives some \emph{involutivity} conditions which imply
 integrability if the functions \(  a, b  \) are \(  C^{1}  \). As  a relatively straightforward application of our
 more general result, we can
 consider the situation where \(  a, b  \) are just continuous but have a particular, though not very restrictive,
 form. More specifically suppose that \(  a, b  \) have the form
 \begin{equation}\label{coeff}\tag{\protect\(  \tilde{\mathcal P}  \protect\)}
a (x, y, z):= A(x,y) F(z) \quad \text{ and }\quad b (x, y, z):= B(x,y) F(z) \end{equation} for  continuous functions  \(  A(x,y), B(x,y),   F(z)  \) satisfying: \begin{enumerate} \item[(\protect\( \tilde{\mathcal P}_{1})\protect\)] \(  F  \) is Lipschitz continuous \item[(\protect\( \tilde{\mathcal P}_{2})\protect\)] There exist sequences \(  A^{(k)}, B^{(k)}  \) of \(  C^{1}  \)
    functions such that
\begin{enumerate} \item[\protect\( i \protect\))] \(  A^{(k)}\to A  \) and \(  B^{(k)}\to B  \), \item[\protect\( ii \protect\))] \(  A^{(k)}_{y}-B^{(k)}_{x}\to 0  \). \end{enumerate} \end{enumerate} Note that \(  A^{(k)}_{y}, B^{(k)}_{x}  \) denote the partial derivatives of \(  A^{(k)}, B^{(k)}  \) with respect to \(  x  \) and \(  y  \) respectively, and that the convergence in \(  i)  \) and \(  ii)   \) of \(  (\tilde{\mathcal P}_{2})  \) are intended in the \(  C^{0}  \) topology.

\begin{thm}\label{pfaff} The Pfaff system \eqref{Fro} defined by functions of the form \eqref{coeff} satisfying \(  (\tilde{\mathcal P}_{1}), (\tilde{\mathcal P}_{2})  \) is uniquely integrable. \end{thm}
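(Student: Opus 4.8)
The plan is to reduce the statement to Theorem \ref{Frobenius} by viewing the Pfaff system as a corank-1 distribution on a piece of $\mathbb{R}^{3}$. To the system \eqref{Fro} with coefficients of the form \eqref{coeff} I would associate the continuous $1$-form
\[
\eta := dz - A(x,y)F(z)\,dx - B(x,y)F(z)\,dy
\]
on $\mathcal U = \mathcal V\times I$ and the distribution $\Delta := \ker\eta$. Since $\eta(\partial/\partial z)\equiv 1$, $\Delta$ is everywhere transversal to the $z$-axis, so every local integral surface of $\Delta$ through $(x_{0},y_{0},z_{0})$ is, after possibly shrinking, a graph $z=f(x,y)$ with $f(x_{0},y_{0})=z_{0}$; and a short computation shows that such a graph is an integral surface of $\Delta$ precisely when $f_{x}=a(x,y,f)$ and $f_{y}=b(x,y,f)$, i.e. precisely when $f$ solves \eqref{Fro}. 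Thus unique integrability of $\Delta$ at $(x_{0},y_{0},z_{0})$ is exactly unique integrability of \eqref{Fro} at that point, and it suffices to show that $\Delta$ is uniformly asymptotically involutive in the sense of Definition \ref{defasyminv}.

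To produce the approximating forms I would first invoke $(\tilde{\mathcal P}_{1})$: since $F$ is Lipschitz, mollification gives $C^{1}$ functions $F^{(k)}\to F$ in $C^{0}$ with $\|(F^{(k)})'\|_{C^{0}}\le\mathrm{Lip}(F)$ for every $k$. Together with the sequences $A^{(k)},B^{(k)}$ furnished by $(\tilde{\mathcal P}_{2})$ I set
\[
\eta_{k}:=dz - A^{(k)}(x,y)F^{(k)}(z)\,dx - B^{(k)}(x,y)F^{(k)}(z)\,dy ,
\]
which are $C^{1}$ forms converging in $C^{0}$ to $\eta$ on a relatively compact subneighbourhood of $\mathcal U$. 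Computing the exterior derivative gives
\[
d\eta_{k}=\big(A^{(k)}_{y}-B^{(k)}_{x}\big)F^{(k)}\,dx\wedge dy - A^{(k)}(F^{(k)})'\,dz\wedge dx - B^{(k)}(F^{(k)})'\,dz\wedge dy ,
\]
and the key point is that when one wedges with $\eta_{k}$ the two terms containing $(F^{(k)})'$ cancel identically, leaving the single term
\[
\eta_{k}\wedge d\eta_{k}=\big(A^{(k)}_{y}-B^{(k)}_{x}\big)F^{(k)}\,dx\wedge dy\wedge dz .
\]

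With these formulas the estimates are routine. Since $A^{(k)}\to A$, $B^{(k)}\to B$, $F^{(k)}\to F$ uniformly on a relatively compact neighbourhood, the functions $A^{(k)},B^{(k)},F^{(k)}$ are uniformly bounded there, while $\|(F^{(k)})'\|\le\mathrm{Lip}(F)$; hence $\|d\eta_{k}\|$ is bounded uniformly in $k$, so $e^{\|d\eta_{k}\|}$ is bounded. By $(\tilde{\mathcal P}_{2})(ii)$ we have $\|A^{(k)}_{y}-B^{(k)}_{x}\|\to 0$, so $\|\eta_{k}\wedge d\eta_{k}\|\to 0$ and therefore $\|\eta_{k}\wedge d\eta_{k}\|\,e^{\|d\eta_{k}\|}\to 0$; and since $A^{(k)}F^{(k)}\to AF$ and $B^{(k)}F^{(k)}\to BF$ uniformly, we also get $\|\eta_{k}-\eta\|\to 0$ and hence $\|\eta_{k}-\eta\|\,e^{\|d\eta_{k}\|}\to 0$. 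Thus $\Delta$ is uniformly asymptotically involutive, and Theorem \ref{Frobenius} yields that $\Delta$, and therefore \eqref{Fro}, is uniquely integrable.

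The only genuinely delicate point --- and the reason hypothesis $(\tilde{\mathcal P}_{1})$ is imposed --- is that the merely Lipschitz factor $F$ cannot be used as a $C^{1}$ approximant directly, so it must be regularized in a way that keeps the derivatives $(F^{(k)})'$ bounded; this is exactly what prevents $\|d\eta_{k}\|$, and with it the exponential weight, from blowing up. The complementary structural fact --- that the potentially dangerous $(F^{(k)})'$-contributions to $\eta_{k}\wedge d\eta_{k}$ cancel, so that the non-involutivity is governed only by $A^{(k)}_{y}-B^{(k)}_{x}$ --- is what makes the weak hypothesis $(\tilde{\mathcal P}_{2})(ii)$ sufficient, with no control on second derivatives needed.
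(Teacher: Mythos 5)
Your proposal is correct and follows essentially the same route as the paper: reduce the Pfaff system to the distribution $\ker(dz-AF\,dx-BF\,dy)$ (the paper isolates this as Proposition \ref{pfaffint}), approximate with $\eta_k=dz-A^{(k)}F^{(k)}dx-B^{(k)}F^{(k)}dy$ using a Lipschitz-controlled regularization of $F$, observe the cancellation so that $\eta_k\wedge d\eta_k=(A^{(k)}_y-B^{(k)}_x)F^{(k)}dx\wedge dy\wedge dz$ while $\|d\eta_k\|$ stays bounded, and conclude uniform asymptotic involutivity and hence unique integrability via Theorem \ref{Frobenius}. No substantive differences.
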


We will prove Theorem \ref{pfaff} in Section \ref{sec:pfaff}. We note that this is an application of our theorem \ref{Frobenius} in the view that integrating a rank $n$ distribution in a $n+m$ dimensional manifold is locally solving a system of $n$ linear PDE`s in $\mathbb{R}^{n+m}$. Thus theorem \ref{pfaff} will be a local application of our theorem
\ref{Frobenius}.

\begin{rem}\label{rem:pde}
 We remark that condition  \(  (\tilde{\mathcal P}_{2})  \) seems relatively abstract but it is quite easy to construct examples of continuous functions \(  A, B  \) which satisfy it. Suppose for example that \(  \tilde A(x), \tilde B(y)  \) are continuous functions and that \(  \varphi(x,y)  \) is a \(  C^{2}  \) function, and let \(  A(x,y) = \tilde A(x) + \varphi_{x}(x,y)  \) and \(  B(x,y) = \tilde B(y) + \varphi_{y}(x,y)  \). Let \(  \tilde A^{(k)}, \tilde B^{(k)}  \) be sequences of \(  C^{1}  \) functions with \( \tilde A^{(k)} \to \tilde A, \tilde B^{(k)} \to B   \). Then  \( A^{(k)}(x,y) = \tilde A^{(k)}(x) +  \varphi_{x}(x,y)   \) and \(B^{(k)}(x,y) = \tilde B^{(k)}(y) +  \varphi_{x}(x,y)     \) are  \(  C^{1}  \) functions and it follows that \(  A^{(k)}\to A  \), \(  B^{(k)}\to B  \). Moreover the partial derivatives are  \(  A^{(k)}_{y}= \varphi_{xy}\), \(B^{(k)}_{x}=\varphi_{yx} \) and therefore  \(  A^{(k)}_{y}-B^{(k)}_{x}   =0\).  A particular example for instance can be given by letting 
 \[
 A(x,y) = x^{\alpha} \qquad B(x,y) = y^{\alpha}, \qquad F(z) = z^{2}
 \]
with $\alpha, \beta<1$. This   satisfies the conditions given above and thus defines a uniquely integrable PDE, but it is not possible to deduce it from theorems on Lipschitz distributions such as \cite{Ram07, S} since it clearly is not a Lipschitz distribution.
\end{rem}


\subsubsection{Dominated decompositions with linear growth} Continuous distributions arise naturally in Dynamical Systems as \( D\varphi \)-invariant distributions for some diffeomorphism \( \varphi: M \to M \). From a geometric point of view dynamical systems also provides us with a variety of interesting examples. Modifications of the Hertz-Hertz-Ures (\cite{HHU}) example lead to two dimensional distributions on $\mathbb{T}^3$ which are $C^1$ outside two toral fibers $\mathbb{T}^2 \times \{0\}$ and 
$\mathbb{T}^2 \times \{1\}$ and are non-uniquely integrable at these fibers. In fact this example even leads to more interesting phenomenon such as a continuous distribution which is not uniquely integrable but still admits a foliation. Therefore dynamical systems play an important role in study of continuous distribtions as a provider of interesting examples. On the other direction the integrability (or not) of such distributions can have significant implications for ergodic and topological properties of the dynamics generated by \( \varphi \). The classical Frobenius Theorem and its various extensions have generally not been suitable for studying the integrability of such  ``dynamically defined'' distributions which are usually given implicitly by asymptotic properties of the dynamics and therefore have low regularity. The conditions we give here, on the other hand, are naturally suited to treat these kind of distributions  because they allow distributions with low regularity and also because they formulate the notion of involutivity in an asymptotic way which lends itself to be verified by sequences of dynamically defined approximations to the invariant distributions.

A first non-trivial application of the results of this paper is given in \cite{TurWar} for a class of   \( C^{2} \) diffeomorphisms \( \varphi: M \to M \) of a \( 3 \)-dimensional manifold which admit a \emph{dominated splitting}: there exists a continuous \( D\varphi \)-invariant tangent bundle decomposition \( TM=E\oplus F \) and a Riemannian metric for which derivative restricted to the \( 1 \)-dimensional distribution \( F \) is \emph{uniformly expanding}, i.e. \( \|D\varphi_{x}\|>1 \) for all \( x\in M \), and the derivative  restricted to the \( 2 \)-dimensional distribution \( E \) may have a mixture of contracting, neutral, or expanding behaviour but is in any case \emph{dominated} by the derivative restricted to \( F \), i.e. \( \|D\varphi_{x}(v)\|<\|D\varphi_{x}(w)\| \) for all \( x\in M \) and all unit vectors \( v\in E_{x}, w\in F_{x} \).

Dominated splittings, even on \( 3 \)-dimensional manifolds,  are not generally uniquely integrable \cite{HHU} but the main result of \cite{TurWar} is the unique integrability of dominated splittings on \( 3 \)-dimensional manifolds under the additional assumption that  the derivative restricted to \( E \) admits \emph{at most linear} growth, i.e. there exists a constant \( C>0 \) such that \( \|D\varphi^{k}_{x}v\|\leq Ck \) for all \( x\in M \), all unit vectors \( v\in E_{x} \), and all \( k\in \mathbb N \). This result is obtained by a non-trivial argument which leads to the verification that the distribution \( E \) is \emph{uniformly asymptotically involutive  on average} and therefore Theorem \ref{Frobenius-weak} can be applied, giving unique integrability.

Previous related results include unique integrability for splittings on the torus \( \mathbb T^{3} \) which admit a strong form of domination \cite{BriBurIva09} and  other results which assume various, rather restrictive, geometric and topological conditions \cite{Bri03, HaPo, Par, HeHeUr2}. The  assumption on linear growth is a natural extension of the most classical of all integrability results in the dynamical systems setting,  that of   \emph{Anosov diffeomorphisms}, where \( E \) is uniformly contracting, i.e.  \( \|D\varphi_{x}(v)\|<1<\|D\varphi_{x}(w)\| \) for all \( x\in M \) and for every unit vector \( v\in E_{x}, w\in F_{x} \). For Anosov diffeomorphisms the integrability of the invariant distributions can be obtained by a very powerful set of techniques which yield so-called ``Stable Manifold'' Theorems, which go back to Hadamard and Perron,  see \cite{HirPugShu77}. These techniques however generally break down  in settings where the domination is weaker.  The application of our Theorem \ref{Frobenius-weak}, as implemented in \cite{TurWar}, includes the setting of Anosov diffeomorphisms on \(  3  \)-dimensional manifolds and thus represents a perhaps more flexible, and maybe even more powerful in some respects, alternative to the standard/classical techniques.

\section{Strategy and main technical steps}\label{strategy}

Our approach is quite geometrical and implements the simple idea that if \(  \Delta  \) is a
 distribution  which is ``almost involutive'', it should be possible to apply a small perturbation to obtain a new
 distribution \(  \widetilde\Delta  \) which is involutive. It turns out that finding  \emph{some} perturbation to
 make the distribution involutive is easy, but making sure this perturbation is small  is non-trivial and essentially
 constitutes the key estimate in our argument.
 We will obtain  an estimate on the size of the perturbation which allows us to conclude that the asymptotic
 involutivity on average condition implies that \(  \Delta  \) can  be approximated by involutive  distributions. This
 can then be shown to imply (not necessarily unique) integrability of \(  \Delta  \).
Finally, we will use  an additional argument to show that we have unique integrability with the additional assumption of \emph{uniform} asymptotic involutivity on average.

In this section we reduce the proof of  Theorem \ref{Frobenius-weak} to the proof of some more technical statements, albeit also of independent interest. Before we proceed, however, we remark that we can assume without loss of generality that the forms \( \eta_k \) approximating \( \eta \) in the definition of asymptotic involutivity are actually \( C^2 \). Indeed, by a standard "mollification" procedure we can replace the original sequence with smoother ones which still satisfy the  (uniform) asymptotic involutivity on average conditions. Thus, from now on and for the rest of the paper we assume that the approximating forms \( \eta_k \) are \( C^2 \).

We will prove the following general perturbation result which does not require any involutivity or asymptotic involutivity assumptions. For two distributions \(  \Delta, \Delta'  \)  defined in some local chart \(  \mathcal U  \), we will use the notation \(  \measuredangle(\Delta,\Delta')  \) to denote the maximum angle between subspaces of \(  \Delta  \) and \(  \Delta'  \) at all points of \(  \mathcal U  \).

 \begin{thm}\label{solution}
Let $\Delta$ be a continuous 2-dimensional distribution on a 3-dimensional manifold $M$. Then, for every $x_0\in M$, there exist neighborhoods $\mathcal U'\subset \mathcal{U}$ of $x_0$ and  $\epsilon>0$  such that if $\Delta_{\epsilon}$ is a $C^2$ distribution with $\measuredangle(\Delta_\epsilon,\Delta)\leq \epsilon$ then there exists a local frame $\{X,Y\}$
 of $\Delta_{\epsilon}$ and a $C^1$ vector field $W$  such that the distribution
 \[
 \widetilde\Delta_{\epsilon} =span\{X+W,Y\}
 \]
  is involutive.  Moreover, \(  X, Y \) and \( W  \) can be chosen so that for every
 $C^2$   form \(  \eta  \) with $\Delta_{\epsilon}=\ker(\eta)$ and $\|\eta\|_{x}\geq 1$  for every \(  x\in \mathcal U
 \),  we have
 \[
 \|W\|\leq  \sup_{x\in\mathcal U, |t|\leq t_{0}} \{\|\eta\|_{x} \|\eta\wedge
 d\eta\|_{x}e^{\widetilde{d\eta}(x,t))}\}.
\] \end{thm}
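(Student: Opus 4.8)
The plan is to work in the fixed local chart $(x^1,x^2,x^3,\mathcal U)$ in which $\Delta$ — and hence $\Delta_\epsilon$ for $\epsilon$ small — is transversal to $\partial/\partial x^3$, so that $\Delta_\epsilon$ has a canonical frame of the form \eqref{XkYk}, namely $X=\partial_1+a\,\partial_3$, $Y=\partial_2+b\,\partial_3$ for $C^1$ functions $a,b$. I would look for the correction in the form $W=w\,\partial_3$ for a single scalar $C^1$ function $w$, so that $X+W=\partial_1+(a+w)\partial_3$ and $\widetilde\Delta_\epsilon=\operatorname{span}\{\partial_1+(a+w)\partial_3,\ \partial_2+b\,\partial_3\}$. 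For a rank-2 distribution on a 3-manifold with frame $\{U,V\}$, involutivity is equivalent to $[U,V]\in\operatorname{span}\{U,V\}$, and a direct computation of $[\,\partial_1+(a+w)\partial_3,\ \partial_2+b\,\partial_3\,]$ shows that this bracket automatically has no $\partial_1,\partial_2$ components, so it lies in the span if and only if its $\partial_3$ component vanishes. That $\partial_3$ component is a first-order linear expression in $w$ of the form
\[
(\partial_2+b\,\partial_3)(a+w)-(\partial_1+(a+w)\partial_3)(b)=Y(a+w)-X(b)+\text{(lower order in }w),
\]
i.e. the involutivity requirement becomes a first-order linear PDE for $w$: roughly $Xw$ or $Yw$ (after suitably grouping terms along one of the flow directions) equals the "non-involutivity defect" of $\{X,Y\}$, which is precisely the quantity controlled by $\eta\wedge d\eta$. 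This is exactly the point flagged in the introduction: in dimensions $\le 3$ one has a single linear PDE rather than an overdetermined system.

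Next I would solve this linear PDE by the method of characteristics, integrating along the flow lines of $X$ (or $Y$). Concretely, with an appropriate choice of which vector field to integrate along, $w$ is obtained as an integral of the defect term composed with the flow $e^{\tau X}$ or $e^{\tau Y}$ from a reference slice $\{x^1=\text{const}\}$ (resp. $\{x^2=\text{const}\}$) inside $\mathcal U'$ out to $\mathcal U$. Solvability on $\mathcal U'$ for $|t|\le t_0$ is guaranteed by the transversality set-up already fixed in Section \ref{sec:ave}: the flows are defined and stay in $\mathcal U$ for the relevant time interval. This yields a $C^1$ function $w$, hence a $C^1$ vector field $W=w\,\partial_3$, making $\widetilde\Delta_\epsilon$ involutive — this gives the qualitative part of the statement with no smallness needed.

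The main obstacle, and the heart of the theorem, is the quantitative bound $\|W\|\le\sup\{\|\eta\|_x\|\eta\wedge d\eta\|_x e^{\widetilde{d\eta}(x,t)}\}$. Here the subtlety is that the linear PDE for $w$ is not pure transport: the coefficient of the zeroth-order term in $w$ involves $\partial_3$-derivatives of $a,b$, which are components of $d\eta$ and are \emph{not} assumed bounded, so a naive Grönwall estimate along the characteristic would produce $e^{\|d\eta\|}$ rather than $e^{\widetilde{d\eta}}$. The key is to recognize that what actually enters the integrating factor along a characteristic of $X$ (resp. $Y$) is the \emph{integral} of the relevant component $d\eta_{1}\circ e^{\tau X}$ (resp. $d\eta_{2}\circ e^{\tau Y}$) along that same flow — which is exactly the definition of $\widetilde{d\eta}_{1}(x,t)$, $\widetilde{d\eta}_{2}(x,t)$, and hence of $\widetilde{d\eta}(x,t)=\max\{\widetilde{d\eta}_1,\widetilde{d\eta}_2\}$. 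So the plan for this step is: (i) write $w$ via Duhamel's formula for the linear ODE obtained along the characteristic, with integrating factor $\exp(\int_0^t (\text{coeff})\circ e^{\tau X}\,d\tau)$; (ii) identify that coefficient's integral with $\widetilde{d\eta}_{1}$ (up to the bookkeeping that relates the $\partial_3$-derivatives of $a,b$ to $d\eta_{k,i}$, using $\eta=\,$(combination with $\|\eta\|_x\ge1$ normalizing the denominators)); (iii) bound the source term of Duhamel's formula by $\|\eta\|_x\|\eta\wedge d\eta\|_x$, since the non-involutivity defect of $\{X,Y\}$ equals, up to the factor $\|\eta\|$, the function whose vanishing is $\eqref{eq:involutive}$; (iv) take the supremum over $x\in\mathcal U$ and $|t|\le t_0$ to get the stated bound. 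The careful identification in (ii)–(iii) — tracking exactly which scalar combinations of the frame data are $\eta\wedge d\eta$ and which are the $\widetilde{d\eta}_i$, and checking the signs and the role of the normalization $\|\eta\|_x\ge1$ — is the computational crux, but it is a finite bookkeeping task once the characteristic/Duhamel structure is in place.
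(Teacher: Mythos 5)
Your proposal is essentially the paper's own proof: the perturbation there is $W=\alpha\,\partial/\partial x^3$ with $\alpha$ exactly the characteristics/Duhamel solution of the transport equation $Y(u)=h+u\,\partial b/\partial x^3$ (see \eqref{eq:inv2}) along the $Y$-flow from the reference slice $\mathcal S$, and the quantitative bound is obtained just as you outline, except that the defect $h$ is bounded by $\|X\|\,\|Y\|\,\|\eta\wedge d\eta\|$ (Lemma \ref{bound1}, the factor $\|X\|\,\|Y\|$ being absorbed by the time bound \eqref{t}), while the integrating factor $\exp\int\partial b/\partial x^3$ is converted, via $Y(c)=-c\,\partial b/\partial x^3+d\eta_2$ and the normalization $1\le c\le\|\eta\|$, into $\|\eta\|\,e^{\widetilde{d\eta}_2}$ (Lemma \ref{bound2}), which is where the $\|\eta\|$ factor actually enters rather than through the source term as you guessed. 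The only step you assert without argument is the $C^1$ regularity of $w$, which the paper does not treat as automatic but proves in Section \ref{sec:diff} by constructing a commuting $C^1$ frame $\{Y,V,Z\}$ along whose transverse flows the hitting time $t_x$ is invariant (Lemma \ref{time}); this is a detail of the same approach, not a different idea.
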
 Theorem \ref{solution}  will be proved in Sections \ref{sec:pert}-\ref{pertbounds}. We remark that the condition \(  \|\eta\|_{x}\geq 1  \) is not a restriction  since the condition $\Delta_{\epsilon}=\ker(\eta)$ is preserved under multiplication of \(  \eta  \) by a scalar and therefore we can always assume without loss of generality that this lower bound holds. Its purpose is just to simplify the form of the upper bound on \(  \|W\| \) (where, as mentioned in the introduction, the norm \(  \|\cdot \|  \) refers to the \(  C^{0}  \) topology). Notice  that this bound is perfectly adapted to work with the asymptotic involutivity assumption of our main theorem. Indeed, by this assumption, for sufficiently large \(  k  \) we have that \(  \Delta^{(k)}  \) is close to \(  \Delta  \) and we can apply Theorem \ref{solution} to get a corresponding involutive distribution \(  \widetilde\Delta^{(k)}  \) after a perturbation  whose norm is bounded by \( \|\eta\| \|\eta\wedge d\eta\|e^{\widetilde{d\eta}(x,t))} \). Since \(  \eta_{k}\to \eta  \) we have that \(  \|\eta_{k}\|  \) is uniformly bounded, hence  \(   \|\eta\| \|\eta\wedge d\eta\|e^{\widetilde{d\eta}(x,t))}  \to 0  \)   and therefore the sequence of perturbed involutive distributions \(  \widetilde\Delta^{(k)}  \)  approximates the original distribution \(  \Delta  \). In Section \ref{sec:conv} we will show that this implies that  \(  \Delta  \) is (weakly) integrable in the sense that it admits (not necessarily unique) local integral surfaces through every point. We formalize this statement in the following

\begin{prop}\label{prop:int} Suppose there exists a sequence of involutive distributions $  \widetilde\Delta^{(k)}  $ which converges to a continuous distribution $\Delta $ uniformly on some open set  $\mathcal U$. Then there exists an open subset  $\mathcal V \subset \mathcal U$ such that $\Delta $ is (not necessarily uniquely) integrable at every $x \in \mathcal V$. \end{prop}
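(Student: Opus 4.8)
The plan is to argue locally: write the integral manifolds of the involutive approximating distributions $\widetilde\Delta^{(k)}$ as graphs over a fixed $2$-plane, and extract a convergent subsequence whose limit is an integral manifold of $\Delta$.

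First I would fix an arbitrary point $x_{0}\in\mathcal U$ and, shrinking $\mathcal U$ if necessary, choose local coordinates $(x^{1},x^{2},x^{3})$ in which $\Delta$ is uniformly transversal to the axis $\partial/\partial x^{3}$; since transversality is an open condition and $\widetilde\Delta^{(k)}\to\Delta$ uniformly, the $\widetilde\Delta^{(k)}$ are then also transversal to $\partial/\partial x^{3}$ for all large $k$. In these coordinates $\Delta$ is spanned by continuous vector fields of the form \eqref{XkYk} with continuous coefficients $a,b$, and each $\widetilde\Delta^{(k)}$ by $C^{1}$ fields of the form \eqref{XkYk} with coefficients $a_{k},b_{k}$. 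Uniform convergence of the distributions (in the angle metric) is equivalent, in the region where transversality is uniform, to $a_{k}\to a$ and $b_{k}\to b$ uniformly; in particular there is a constant $C$ with $|a_{k}|,|b_{k}|\leq C$ on $\mathcal U$ for all large $k$.

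Next, since each $\widetilde\Delta^{(k)}$ is $C^{1}$ and involutive, the classical Frobenius theorem produces, through every point $p=(p^{1},p^{2},p^{3})$, an integral manifold, which by transversality to $\partial/\partial x^{3}$ is the graph $x^{3}=u^{(k)}_{p}(x^{1},x^{2})$ of a $C^{1}$ function solving the Pfaff system $\partial_{1}u=a_{k}(\cdot,\cdot,u)$, $\partial_{2}u=b_{k}(\cdot,\cdot,u)$ with $u^{(k)}_{p}(p^{1},p^{2})=p^{3}$. Using the uniform bound $|a_{k}|,|b_{k}|\leq C$, one can choose a small box $\mathcal V=V'\times I\subset\mathcal U$ around $x_{0}$ (with $V'\subset\mathbb R^{2}$, $I\subset\mathbb R$) such that for every $p\in\mathcal V$ the solution $u^{(k)}_{p}$ is defined on all of $V'$ and its graph stays inside $\mathcal U$; the family $\{u^{(k)}_{p}\}_{k}$ is then uniformly bounded and uniformly Lipschitz (constant $C$) on $V'$. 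Fixing $p\in\mathcal V$, the Arzel\`a--Ascoli theorem yields a subsequence with $u^{(k_{j})}_{p}\to u_{p}$ uniformly on $V'$ for some Lipschitz $u_{p}$ with $u_{p}(p^{1},p^{2})=p^{3}$. The crucial point, special to the corank-$1$ setting, is that the gradient of $u^{(k_{j})}_{p}$ is \emph{prescribed} by the coefficients along the graph, $\nabla u^{(k_{j})}_{p}=(a_{k_{j}},b_{k_{j}})(\cdot,\cdot,u^{(k_{j})}_{p})$; since $a_{k_{j}}\to a$ and $b_{k_{j}}\to b$ uniformly, $a,b$ are continuous and $u^{(k_{j})}_{p}\to u_{p}$ uniformly, these gradients converge uniformly on $V'$ to the continuous function $(a,b)(\cdot,\cdot,u_{p})$. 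By the standard fact that uniform convergence of $C^{1}$ functions together with uniform convergence of their gradients forces the limit to be $C^{1}$ with the limiting gradient, $u_{p}\in C^{1}(V')$ and $\nabla u_{p}=(a,b)(\cdot,\cdot,u_{p})$, i.e. the graph of $u_{p}$ is a $C^{1}$ integral manifold of $\Delta$ through $p$. Since $p\in\mathcal V$ was arbitrary, $\Delta$ is integrable at every point of $\mathcal V$.

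I expect the main obstacle to be the uniformity needed for the limiting argument: one must shrink the chart so that transversality to $\partial/\partial x^{3}$ is uniform and so that the integral manifolds of all the $\widetilde\Delta^{(k)}$ through all points of $\mathcal V$ stay inside $\mathcal U$ (so that Arzel\`a--Ascoli applies on a fixed domain $V'$), and one must verify carefully that uniform convergence of the graphs together with uniform convergence of the defining coefficients is enough to force the limit graph to be $C^{1}$ and everywhere tangent to $\Delta$, rather than merely Lipschitz. In dimension two the same argument applies verbatim with a single coefficient $a$, where it reduces to the classical compactness proof of Peano's existence theorem for the ODE $dx^{2}/dx^{1}=a(x^{1},x^{2})$.
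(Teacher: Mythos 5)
Your argument is correct and is essentially the paper's own proof: the paper likewise fixes coordinates in which the approximating involutive distributions are spanned by frames of the form \eqref{XkYk}, takes their local integral surfaces, applies Arzel\`a--Ascoli, and upgrades the uniform limit to a $C^1$ integral surface of $\Delta$ by passing the (uniformly convergent) tangent data to the limit via the fundamental theorem of calculus. The only real difference is presentational: the paper constructs the surfaces explicitly as $(s_1,s_2)\mapsto e^{s_1X_k}\circ e^{s_2Y_k}(x)$, using that involutivity forces $[X_k,Y_k]=0$ for frames of this special form, which is precisely your graph parametrization $u^{(k)}_p$ and automatically yields a fixed domain of definition, thereby sidestepping the continuation argument you flag as the main obstacle.
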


We note that the (local) convergence of a family of hyperplanes and line bundles is to be understood here in terms of the maximal angle going to zero uniformly in a  given neighbourhood. To get uniqueness of these integral manifolds  we will prove the following statement.

\begin{prop}\label{thm:uniqueness} Let $\Delta$ be a continuous 2-dimensional distribution on a 3-dimensional manifold $M$. Suppose that \( \Delta \) is  uniformly asymptotically involutive  on average. Then $\Delta$ is locally spanned by two  uniquely integrable vector fields \(  X, Y  \). \end{prop}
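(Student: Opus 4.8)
The plan is to reduce the statement to a one-dimensional uniqueness problem on the slices of a coordinate chart. To this end, fix $x_{0}\in M$ and take the local coordinates $(x^{1},x^{2},x^{3})$, the $C^{2}$ forms $\eta_{k}$, the $C^{1}$ frames $\{X_{k},Y_{k}\}$ as in \eqref{XkYk}, and the neighbourhood $\mathcal U'$ supplied by Definition \ref{defk}; after the standard reductions of Section \ref{strategy} (mollification, together with normalizing the $dx^{3}$-coefficient) we may assume $\eta_{k}=-a_{k}\,dx^{1}-b_{k}\,dx^{2}+dx^{3}$ and $\eta=-a\,dx^{1}-b\,dx^{2}+dx^{3}$, with $a_{k}\to a$, $b_{k}\to b$ in $C^{0}$, so that $\Delta$ is spanned in this chart by the continuous vector fields $X=\partial/\partial x^{1}+a\,\partial/\partial x^{3}$ and $Y=\partial/\partial x^{2}+b\,\partial/\partial x^{3}$, which are linearly independent throughout $\mathcal U'$. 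It therefore suffices to show that $X$ and $Y$ are uniquely integrable. Since $X$ has no $\partial/\partial x^{2}$-component, every integral curve of $X$ stays inside a single slice $\Sigma_{c}=\{x^{2}=c\}$, and symmetrically integral curves of $Y$ stay inside slices $\{x^{1}=c\}$; so, by symmetry, it is enough to prove that the restriction of $X$ to a fixed slice $\Sigma=\Sigma_{c}$ is uniquely integrable.

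On $\Sigma$, with coordinates $(x^{1},x^{3})$, the rank-$1$ distribution $\ker(\eta|_{\Sigma})=\mathrm{span}(X|_{\Sigma})$ is approximated by $\eta_{k}|_{\Sigma}=-a_{k}\,dx^{1}+dx^{3}$, whose intrinsic exterior derivative is $d(\eta_{k}|_{\Sigma})=(\partial a_{k}/\partial x^{3})\,dx^{1}\wedge dx^{3}$; comparing with the expansion of $d\eta_{k}$ in the ambient chart gives $d\eta_{k,1}=\partial a_{k}/\partial x^{3}$. The flow $e^{\tau X_{k}}$ preserves $\Sigma$, so the quantity $\widetilde{d\eta}_{k,1}(x,t)$ computed intrinsically on $\Sigma$ agrees with the ambient one, while $\|\eta_{k}|_{\Sigma}-\eta|_{\Sigma}\|_{x}=|a-a_{k}|(x)\le\|\eta_{k}-\eta\|_{x}$ and $\widetilde{d\eta}_{k,1}\le\widetilde{d\eta}_{k}$; moreover $\eta_{k}\wedge d\eta_{k}\equiv 0$ automatically in dimension two, so only the \emph{uniform} condition carries content. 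Hence the uniform asymptotic involutivity on average of $\Delta$ restricts to the two-dimensional uniform asymptotic involutivity on average of $\ker(\eta|_{\Sigma})$, and the problem is reduced to establishing directly that this two-dimensional hypothesis forces $X|_{\Sigma}=\partial/\partial x^{1}+a\,\partial/\partial x^{3}$ to be uniquely integrable.

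For this last step the key observation is that $\widetilde{d\eta}_{k,1}$ records the logarithmic rate of transverse expansion of the $C^{1}$ flow $e^{\tau X_{k}}$: writing $e^{\tau X_{k}}(x^{1}_{0},v)=(x^{1}_{0}+\tau,\psi_{k}(\tau;v))$, the variational equation gives $\partial_{v}\psi_{k}(\tau;v)=\exp\!\bigl(\int_{0}^{\tau}(\partial a_{k}/\partial x^{3})\circ e^{\sigma X_{k}}\,d\sigma\bigr)=e^{\widetilde{d\eta}_{k,1}}$. Passing to the (merely $C^{1}$) flow-box chart $\Phi_{k}(s,v)=e^{sX_{k}}(x^{1}_{0},v)$, which straightens $X_{k}$ to $\partial/\partial s$, the field $X$ becomes $\partial/\partial s+\bar a_{k}\,\partial/\partial v$ with $\bar a_{k}=\bigl((a-a_{k})\circ\Phi_{k}\bigr)e^{-\widetilde{d\eta}_{k,1}}$. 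For any integral curve $\gamma$ of $X$ through a point $x\in\mathcal U'$, written $v=v(s)$ in this chart, one integrates $v'=\bar a_{k}(s,v)$, translates back through $\partial_{v}\psi_{k}$, and uses the reindexing identity $\int_{\sigma}^{s}(\partial a_{k}/\partial x^{3})\circ e^{\tau X_{k}}(x)\,d\tau=\widetilde{d\eta}_{k,1}\bigl(e^{\sigma X_{k}}(x),\,s-\sigma\bigr)$ to obtain a bound of the form
\[
\bigl|\gamma(s)-e^{sX_{k}}(x)\bigr|\;\le\;C\,|s|\;\sup_{y\in\mathcal U',\,|t|\le t_{0}}\Bigl(\|\eta_{k}-\eta\|_{y}\,e^{\widetilde{d\eta}_{k,1}(y,t)}\Bigr).
\]
By the uniform hypothesis the right-hand side tends to $0$ as $k\to\infty$, and since $e^{sX_{k}}(x)$ does not depend on $\gamma$, any two integral curves of $X$ through $x$ must coincide. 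The same argument for $Y$ on the slices $\{x^{1}=c\}$ then shows that $\{X,Y\}$ is the required local frame of uniquely integrable vector fields.

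The hard part will be the displayed estimate, that is, producing the \emph{flow-averaged} exponent $\widetilde{d\eta}_{k,1}$ rather than the crude one: a naive Gronwall comparison of two integral curves of $X$ at a fixed value of $x^{1}$ yields only $\|a-a_{k}\|\,|s|\,e^{\|\partial a_{k}/\partial x^{3}\|_{\infty}|s|}$, which merely reproduces Theorem \ref{Frobenius}; to do better one must compare the curves \emph{along the $X_{k}$-flow}, so that the transverse distortion is measured by the integral of $\partial a_{k}/\partial x^{3}$ over orbits. Two further technical points need care: after straightening $X_{k}$ the residual field $\bar a_{k}$ is still only continuous, so ODE uniqueness cannot be invoked — one instead uses the explicit smallness of $\bar a_{k}$ together with the fact that \emph{all} integral curves of $X$ through a point are trapped within an $O\!\bigl(\|\eta_{k}-\eta\|e^{\widetilde{d\eta}_{k,1}}\bigr)$-neighbourhood of the straight orbit; and the pointwise (in $x,t$) convergence of Definition \ref{defk} must be promoted to control on a fixed sub-neighbourhood, which is arranged by shrinking $\mathcal U'$ and by exploiting the continuity of $\widetilde{d\eta}_{k,1}$ to absorb the $O(\epsilon_{k})$ displacement of the comparison orbits. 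One should also verify that passing to the normalized forms above does not spoil the hypothesis.
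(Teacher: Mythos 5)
Your overall architecture coincides with the paper's: the paper also reduces Proposition \ref{thm:uniqueness} to unique integrability of the frame fields $X,Y$, restricts $\eta_k,\eta$ to the slices tangent to $\langle\partial/\partial x^{1},\partial/\partial x^{3}\rangle$, observes that the restricted exterior derivative is exactly $d\eta_{k,1}$, and thereby reduces to a rank-$1$ uniqueness statement on a surface (Proposition \ref{uniq}) governed by the same exponent $\widetilde{d\eta}_{k,1}$. Where you genuinely diverge is in the proof of that rank-$1$ statement. The paper rescales each $w_k$ by a function $\beta$ (transported along the $X_k$-flow) to produce a \emph{closed} form $\hat w_k$ with the same kernel and norm bounded by $e^{\widetilde{dw}_k}\|w_k\|$ (Lemma \ref{unik}), and then derives a contradiction from Stokes' theorem applied to the bigon bounded by two putative integral curves and a transversal. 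You instead straighten the $X_k$-flow and prove a quantitative comparison $|\gamma(s)-e^{sX_k}(x)|\le C|s|\sup\bigl(\|\eta_k-\eta\|\,e^{\widetilde{d\eta}_{k,1}}\bigr)$ for \emph{every} integral curve $\gamma$ of $X$. The two devices are essentially dual --- the paper's $\hat w_k$ is the differential of your straightened transverse coordinate, and $\int_\gamma\hat w_k$ is precisely the displacement you estimate --- but your packaging is more ODE-theoretic and yields a stronger, explicitly quantitative closeness statement, while the paper's packaging avoids any normalization of the forms and produces the reusable Lemma \ref{unik}. Your key estimate is correct provided you organize the computation so that the exponent is evaluated at points of $\gamma$ itself, i.e.\ combine $v'=\bar a_k(s,v)$ with the factor $\partial_v\psi_k$ \emph{before} integrating: then $\frac{d}{d\sigma}\psi_k(s;v(\sigma))=(a-a_k)(\gamma(\sigma))\,e^{\widetilde{d\eta}_{k,1}(\gamma(\sigma),\,s-\sigma)}$, every evaluation point lies in $\mathcal U'\times[-t_0,t_0]$, and your worry about ``absorbing the $O(\epsilon_k)$ displacement by continuity of $\widetilde{d\eta}_{k,1}$'' disappears --- which is fortunate, because that continuity argument would not be legitimate as stated ($\widetilde{d\eta}_{k,1}$ is not equicontinuous in $k$). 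If instead you first conclude smallness of $v(s)-v_0$ in the $k$-dependent chart and only then convert back, the conversion costs an uncontrolled factor $e^{\widetilde{d\eta}_{k,1}}$, so the order of operations matters.

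The one substantive loose end is the normalization you defer at the end. Your identities $d\eta_{k,1}=\partial a_k/\partial x^{3}$ and $\partial_v\psi_k=e^{\widetilde{d\eta}_{k,1}}$ hold only after forcing the $dx^{3}$-coefficient $c_k$ of $\eta_k$ to be $1$, and the hypothesis of Definition \ref{defk} is \emph{not} obviously invariant under $\eta_k\mapsto\eta_k/c_k$: the exponents differ by $\int_0^t(1-c_k)\,\partial a_k/\partial x^{3}\circ e^{\tau X_k}\,d\tau$ plus a bounded term, and this is not bounded in general, so the transfer needs an argument you have not supplied. The paper sidesteps this by never normalizing: it keeps $w_k=c_k(dx^3-a_k dx^1)$, notes $dw_k=d\eta_{k,1}$ exactly, and handles the $c_k$-factor inside Lemma \ref{unik} via $1\le c_k\le\|\eta_k\|$ (the transverse distortion of the $X_k$-flow then equals $\frac{c_k(\mathrm{start})}{c_k(\mathrm{end})}\exp\!\int \frac{d\eta_{k,1}}{c_k}$ along the orbit). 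You should do the same: run your comparison with the unnormalized data and compare $\int d\eta_{k,1}/c_k$ with $\widetilde{d\eta}_{k,1}$ as the paper does, rather than changing the approximating forms. A second, milder point, shared with the paper's own proof, is that your final display takes a supremum over $y\in\mathcal U'$, $|t|\le t_0$, whereas Definition \ref{defk} is stated pointwise in $(x,t)$; some local uniformity must be read into (or extracted from) the hypothesis in either approach.
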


The unique integrability of $\Delta$ follows  from Proposition \ref{thm:uniqueness} by a simple contradiction argument: if there are two integral manifolds of $\Delta$ through a point then at least one of the vector fields
  $X$ and $Y$ does not satisfy uniqueness of solutions,  thus
   contradicting the statement of Proposition \ref{thm:uniqueness}.

We have thus reduced the proof of  Theorem \ref{Frobenius-weak} to the proofs of Theorem \ref{solution} which will be given in Sections \ref{sec:pert}-\ref{pertbounds}, Proposition \ref{prop:int} which will be given in Section \ref{sec:conv}, and Proposition \ref{thm:uniqueness}, which will be given in Section \ref{unicity}.


\section{The perturbation}\label{sec:pert}

We now fix once and for all an arbitrary point \(  x_{0}\in M  \). Our aim in this section is to define a neighbourhood \(  \mathcal U  \) of \(  x_{0}  \) and a perturbation of a \(  C^{2}  \) distribution \(  \Delta_{\epsilon}  \) sufficiently close to our original distribution \(  \Delta  \) which yields a new \( C^{1} \) distribution \(  \widetilde\Delta_{\epsilon}  \). In the following sections we will show that \( \widetilde\Delta_{\epsilon}  \) satisfies the required properties for the conclusions of Theorem \ref{Frobenius-weak}, in particular that it is involutive and that it is a \emph{small} perturbation of \( \Delta_{\epsilon} \).

First of all we fix a local chart
 $(x^1,x^2,x^3, \mathcal{U}_0)$   centered at $x_0$. Notice that we can (and do) assume without loss of generality
 that \(  \Delta  \) is everywhere transversal to the coordinate axes in \(  \mathcal U_{0}  \) and that therefore
 this transversality also holds for \(  \Delta_{\epsilon}  \) if \(  \epsilon  \) is sufficiently small. In particular
 this implies that we can define a local frame
 \( \{X, Y\} \)
for \(  \Delta_{\epsilon}  \) in \(  \mathcal U_{0}  \) where \(  X, Y  \) are  \( C^2 \) vector fields of the form
 \begin{equation}\label{X-def}
 X=\frac{\partial}{\partial x^1}+a\frac{\partial}{\partial x^{3}}
 \quad \text{ and } \quad
 Y=\frac{\partial}{\partial x^2}+b\frac{\partial}{\partial x^{3}}.
 \end{equation}
for suitable \( C^2 \) functions  \(  a(x), b(x)  \). Notice  that the transversality condition implies that the \(  C^{0}  \) norms of \(  a  \) and \(  b  \) are uniformly bounded   below for all \(  \Delta_{\epsilon}  \) with \(  \epsilon  \) sufficiently small.

\begin{rem} The vector fields \(  X, Y  \) are \(  C^{2}  \) and thus define local flows, which we will denote by \(  e^{tX}, e^{tY}  \) respectively,  and admit unique integral curves through every point \(  x \in \mathcal U_{0} \), which we denote by \(  \mathcal X_{x}, \mathcal Y_{x}  \) respectively.  These integral curves will play an important role in the following construction and it will be sometimes convenient to mix the notation a little bit. For example we will refer to the ``natural'' parametrization of an integral curve \(  \mathcal Y_{x}  \) to intend parametrization by the flow so that once we specify some point \(  y=\mathcal Y_{x}(0)  \) (which may be different from the point \(  x  \) which we use to specify the curve) we then have \(  \mathcal Y_{x}(t) := e^{tY}(y) \).

\end{rem}

We are now ready to fix the neighbourhood \(  \mathcal U  \) in which we define the perturbation. At this stage we make certain choices motivated by the fact that the involutive distribution we are constructing is given by the span of two vector fields of the form \(  \{X+W, Y\}  \). We could similarly obtain  a pair of vector fields of the form \(  \{X, Y+Z\}  \) since the situation is completely symmetric. We let \(  \mathcal S \) denote the integral manifold through \(  x_{0}  \) of the coordinate planes given by \(  <\partial/\partial x^{1}, \partial/\partial x^{3}>  \) in the local chart \(  \mathcal U_{0}  \).  Then the vector field \(  Y  \) and its unique integral curves  are everywhere transversal to \(  \mathcal S  \) and indeed, by the uniform bounds on \(  |b(x)|  \), this transversality is uniform in \(  \Delta_{\epsilon}  \) as long as \(  \epsilon  \) is sufficiently small. In particular this means that we can choose a smaller neighbourhood \(  \mathcal U \subset \mathcal U_{0} \) which is ``saturated'' by the integral curves of \(  Y  \) in the sense that every point \(  x\in \mathcal U  \) lies on an integral curve of \(  Y  \) through some point of \(  \mathcal S\cap \mathcal U  \). Moreover, this saturation condition can be guaranteed for a fixed neighbourhood \(  \mathcal U  \) for any \(  \Delta_{\epsilon}  \) sufficiently close to \(  \Delta  \). For every \( x\in \mathcal U \) we let  \( \mathcal Y_x \) denote the integral curve through \( x \) of the vector field \( Y \). We consider the natural parametrization of each integral curve \( \mathcal Y_x \) by fixing the initial condition $\mathcal{Y}_x(0)\in \mathcal S$ and then let  $t_x$ be the time
 such that $\mathcal{Y}_x(t_x)=x$.  Notice that by choosing our neighbourhood \(  \mathcal U  \) sufficiently small,
 we can also assume that the integration time \(  t_{x}  \) is bounded by any a priori given arbitrarily small
 constant.
To simplify the final expression it will be convenient to have
 \begin{equation}\label{t}
 |t_{x}|\leq \frac{1}{\|X\|\cdot \|Y\|} = \frac{1}{\sqrt{(1+a^{2})(1+b^{2})}}\leq 1.
 \end{equation}
This upper bound is uniform for all distributions \(  \Delta_{\epsilon}  \) is \(  \epsilon  \) if sufficiently small. 

We are now ready to define our perturbation. Notice first that the explicit forms of the vector fields \(  X  \) and \(  Y  \) implies that the Lie bracket \( [X, Y] \) always lies in the \(  \partial/\partial x^{3}  \) direction. Indeed, we can compute explicitly the Lie bracket and use it to define a function \( h: \mathcal U \to \mathbb R \) by
 \begin{equation}\label{h}
 [X,Y]=\left(\frac{\partial b}{\partial x^{1}} - \frac{\partial a}{\partial x^{2}} + a \frac{\partial b}{\partial
 x^{3}}-b\frac{\partial a}{\partial x^{3}}\right) \frac{\partial}{\partial x^{3}} =: h \frac{\partial}{\partial x^3}.
\end{equation} Thus \(  h  \) is  the (signed) magnitude of the Lie bracket \( [X,Y]   \) (which happens to be always in the  \(  \partial/\partial x^{3}  \) direction). We define the function  $\alpha:\mathcal{U}\to\mathbb{R}$  by \begin{equation}\label{alpha}
 \alpha(x):=\int_0^{t_x}h(\mathcal{Y}_x(\tau))\exp\left(\int_{\tau}^{t_x}\frac{\partial b}{\partial
 x^3}(\mathcal{Y}_x(s))ds\right)d\tau.
\end{equation} At the moment the function \( \alpha \) is just defined ``out of the blue'' with no immediately obvious motivation, but we will show below that it is exactly the right form for the perturbation we seek. Using this function we define the perturbed distribution by
 \begin{equation}\label{deltaepsilon}
 \widetilde\Delta_{\epsilon} :=span\left\{X+\alpha\frac{\partial }{\partial x^3},Y\right\}
 \end{equation}
In  Section \ref{sec:diff} we will show that \(  \widetilde\Delta_\epsilon  \) is \(  C^{1}  \), in Section \ref{sec:invol} that it is involutive, and in Section \ref{pertbounds} we will show that the perturbation \(  \alpha  \) satisfies the required upper bounds.

\section{Differentiability}\label{sec:diff} In this section we prove the following

\begin{prop}\label{C1}
 The function $\alpha$ is $C^1$.
\end{prop}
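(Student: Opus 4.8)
The plan is to disentangle the two distinct ways in which the point \(x\) enters the definition \eqref{alpha} of \(\alpha\): through the base point \(\mathcal Y_x(0)\in\mathcal S\) of the integral curve of \(Y\) through \(x\), and through the integration time \(t_x\). For \(x\in\mathcal U\) set \(p(x):=\mathcal Y_x(0)\in\mathcal S\) and \(t(x):=t_x\), so that \(\mathcal Y_x(\tau)=e^{\tau Y}(p(x))\) for all relevant \(\tau\) and
\[
\alpha(x)=G\bigl(p(x),t(x)\bigr),\qquad
G(p,t):=\int_0^{t} h\bigl(e^{\tau Y}(p)\bigr)\,\exp\!\left(\int_{\tau}^{t}\frac{\partial b}{\partial x^3}\bigl(e^{sY}(p)\bigr)\,ds\right)d\tau .
\]
It then suffices to show that \(x\mapsto(p(x),t(x))\) is \(C^1\) and that \(G\) is \(C^1\) on its domain, since \(\alpha\) will be a composition of \(C^1\) maps.

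For the first step I would invoke the flow-box (straightening) theorem. The vector field \(Y=\partial/\partial x^{2}+b\,\partial/\partial x^{3}\) of \eqref{X-def} is \(C^2\), and since its \(\partial/\partial x^{2}\)-component is identically \(1\) it is everywhere transversal to the smooth coordinate surface \(\mathcal S=\{x^{2}=0\}\). The flow of a \(C^2\) vector field is \(C^2\) jointly in time and space, so \(\Phi(p,t):=e^{tY}(p)\) defines a \(C^2\) map from a neighbourhood of \(\mathcal S\times\{0\}\) in \(\mathcal S\times\mathbb R\) into \(\mathcal U_0\), with differential \(D\Phi(p,0)(v,s)=v+sY(p)\), which is an isomorphism precisely by the transversality. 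Since \(\mathcal U\) was chosen in Section~\ref{sec:pert} to be saturated by the integral curves of \(Y\) and arbitrarily small, \(\Phi\) restricts to a \(C^2\) diffeomorphism onto \(\mathcal U\), and hence \((p(\cdot),t(\cdot))=\Phi^{-1}\) is \(C^2\), in particular \(C^1\).

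For the second step, observe that by \eqref{h} the function \(h\) is \(C^1\) (because \(a,b\) are \(C^2\)) and \(\partial b/\partial x^{3}\) is \(C^1\) for the same reason; composing these with the \(C^2\) flow \((\tau,p)\mapsto e^{\tau Y}(p)\) keeps them \(C^1\). The inner integral \((p,t,\tau)\mapsto\int_{\tau}^{t}\frac{\partial b}{\partial x^{3}}\bigl(e^{sY}(p)\bigr)\,ds\) is then \(C^1\) — differentiate under the integral sign in \(p\) and apply the fundamental theorem of calculus in the endpoints \(t\) and \(\tau\) — so, after composing with \(\exp\) and multiplying by \(h\bigl(e^{\tau Y}(p)\bigr)\), the integrand \(F(p,t,\tau)\) appearing in \(G\) is \(C^1\) jointly in \((p,t,\tau)\) on the relevant compact domain. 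Consequently \(G(p,t)=\int_0^{t}F(p,t,\tau)\,d\tau\) is \(C^1\): its derivative in \(p\) is \(\int_0^{t}\partial_p F\,d\tau\), and its derivative in \(t\) is \(F(p,t,t)+\int_0^{t}\partial_t F\,d\tau\) by the Leibniz rule (the variable \(t\) occurring both as the upper limit of integration and inside \(F\)), and both are continuous since \(F\) and its first partials are continuous and the interval of integration depends continuously on \(t\).

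Combining the two steps, \(\alpha=G\circ(p,t)\) is \(C^1\), which is the assertion of the proposition. The only genuine work lies in the third step — checking that differentiation under the iterated integral sign is legitimate and that no derivative is lost (which is why the conclusion is exactly \(C^1\): the function \(h\) is only \(C^1\)). The conceptual point that makes this routine rather than delicate is the reparametrisation of the first step, which replaces the simultaneous, and superficially circular, dependence of \(\alpha\) on \(x\) through the integrand, the domain of integration, and the curve \(\mathcal Y_x\), by a clean composition with the \(C^2\) flow-box coordinates \((p(x),t(x))\).
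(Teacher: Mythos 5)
Your argument is correct, and it reaches the conclusion by a more streamlined route than the paper, although both hinge on the same underlying object. The paper does not factor \(\alpha\) through flow-box coordinates explicitly: it builds (Proposition \ref{cano}) a commuting \(C^1\) frame \(\{Y,V,Z\}\) by pushing forward \(\partial/\partial x^1\) and \(\partial/\partial x^3\) under the flow \(e^{tY}\) (its map \(\phi(t_1,t_2,t_3)=e^{t_1Y}\circ e^{t_2\partial/\partial x^1}\circ e^{t_3\partial/\partial x^3}(x_0)\) is essentially your \(\Phi\) composed with a parametrization of \(\mathcal S\)), proves via the commutation relations that \(t_x\) is constant along integral curves of \(V\) and \(Z\) (Lemma \ref{time}), and then computes the directional derivatives of \(\alpha\) along \(V\) and \(Z\) from first principles by splitting the difference quotient, using only that \(h\) and \(\partial b/\partial x^3\) are \(C^1\); differentiability along \(Y\) is read off directly from \eqref{alpha}. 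You instead write \(\alpha=G\circ\Phi^{-1}\), get \(C^2\) regularity of \(\Phi^{-1}=(p(\cdot),t(\cdot))\) from the inverse function theorem applied to the \(C^2\) flow, and obtain \(C^1\) regularity of the parameter-dependent integral \(G\) by differentiation under the integral sign together with the Leibniz rule for the variable upper limit. What your version buys is joint \(C^1\) regularity in one stroke, as a composition of \(C^1\) maps, whereas the paper's computation strictly speaking establishes existence of the three directional derivatives and leaves their continuity (needed to conclude \(C^1\)) implicit in the explicit formulas it derives; what the paper's version buys is a self-contained construction of the commuting frame \(\{Y,V,Z\}\) adapted to \(\mathcal S\), which also makes transparent why the specific coordinate system is chosen (the invariance \(t_x=t_y\) along \(V\) and \(Z\)). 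The only points in your write-up that deserve a word of care are the injectivity of \(\Phi\) on the saturated neighbourhood (fine after shrinking, by the uniform transversality of \(Y\) to \(\mathcal S\) and the smallness of the times \(t_x\)) and the remark that "\(\partial_p\)" means differentiation in coordinates on the two-dimensional transversal \(\mathcal S\); neither affects correctness.
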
 Since  \( X \) and \( Y \) are \( C^2 \), it follows immediately from Proposition \ref{C1} and  \eqref{deltaepsilon} that  \(  \widetilde\Delta_\epsilon  \) is \( C^{1} \) as required.

To prove Proposition \ref{C1}, notice first that from the definition of \( \alpha \) in \eqref{alpha} it follows immediately that
 $\alpha$ is $C^1$ in the direction of $Y$. It is therefore sufficient to prove that \( \alpha \) is also \( C^1 \)
 along
  two other vector fields that together with \( Y \) form a coordinate system in $\mathcal{U}$.
  The existence of such a coordinate system in \( \mathcal U \) is guaranteed by classical results on the
  representation of vector fields near a regular point, see e.g. \cite{Lee}, however we will need here a particular
  choice of coordinate system,  in particular one defined by \( Y \) and two additional vector fields \( Z, V \) which
  span the tangent space of \( \mathcal S \).  Therefore we give a self contained proof.

\begin{prop}\label{cano} There are two $C^1$ vector fields $Z$ and $V$ that span the tangent space of $\mathcal S$, such that the system  $\{Y, V,Z\}$ is a trivialization of the tangent bundle $TM$ by commuting vector fields in the neighborhood $\mathcal{U}$, which is to say that $$[Y,Z]=[Y,W]=[Z,W]=0.$$ \end{prop}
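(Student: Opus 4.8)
The plan is to build the frame by \emph{flowing out} a coordinate frame of $\mathcal S$ along the $Y$-flow, so that commutativity becomes automatic from the fact that a diffeomorphism carries Lie brackets to Lie brackets. Recall that $\mathcal S$ is the coordinate slice $\{x^2=0\}$ in the chart $\mathcal U_0$, with tangent space everywhere spanned by $\partial/\partial x^1$ and $\partial/\partial x^3$, and that $Y=\partial/\partial x^2+b\,\partial/\partial x^3$ is $C^2$ and everywhere transversal to $\mathcal S$. Since $Y$ is $C^2$, the classical theory of dependence of solutions of ODEs on initial conditions gives that the flow $(t,x)\mapsto e^{tY}(x)$ is $C^2$, jointly in $t$ and $x$, on a neighbourhood of $(0,x_0)$.

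First I would introduce the map
\[
\psi(s^1,s^3,t):=e^{tY}(s^1,0,s^3),
\]
which is $C^2$ on a neighbourhood of the origin in $\mathbb R^3$. A direct computation of its differential at the origin gives $D\psi_0(\partial/\partial s^1)=\partial/\partial x^1$, $D\psi_0(\partial/\partial s^3)=\partial/\partial x^3$ and $D\psi_0(\partial/\partial t)=Y(x_0)$, and these three vectors are linearly independent precisely because $Y$ is transversal to $\mathcal S$ at $x_0$. By the inverse function theorem $\psi$ is therefore a $C^2$ diffeomorphism from a neighbourhood of $0$ onto a neighbourhood of $x_0$, which (after shrinking $\mathcal U$ if necessary) we may assume equals $\mathcal U$; note that its image is automatically saturated by the $Y$-flow over $\mathcal S$, consistently with the choice of $\mathcal U$ made in Section~\ref{sec:pert}.

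Next I would set $Z:=\psi_*(\partial/\partial s^1)$ and $V:=\psi_*(\partial/\partial s^3)$, and observe that $\psi_*(\partial/\partial t)=Y$, since $\partial\psi/\partial t=Y\circ\psi$ by the very definition of the flow. On the one hand, since $\psi$ is $C^2$ its Jacobian $D\psi$ is $C^1$, and $Z$ (respectively $V$) is the map $x\mapsto D\psi_{\psi^{-1}(x)}(\partial/\partial s^1)$ (respectively with $\partial/\partial s^3$), i.e. a composition of the $C^1$ map $D\psi$ with the $C^2$ map $\psi^{-1}$; hence $Z$ and $V$ are $C^1$ vector fields on $\mathcal U$. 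On the other hand $\partial/\partial s^1$, $\partial/\partial s^3$, $\partial/\partial t$ are coordinate vector fields and so pairwise commute, and a diffeomorphism preserves Lie brackets, so $[Z,V]=[Z,Y]=[V,Y]=0$. Finally, restricting $\psi$ to $t=0$ gives $\psi(s^1,s^3,0)=(s^1,0,s^3)$, whence along $\mathcal S$ we have $Z=\partial/\partial x^1$ and $V=\partial/\partial x^3$, so $Z,V$ span $T\mathcal S$; and $\{Y,V,Z\}$, being the $\psi$-pushforward of a coordinate frame, trivializes $TM$ on all of $\mathcal U$.

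The only genuine subtlety is the regularity bookkeeping: one must use that a $C^2$ vector field has a $C^2$ flow, so that $\psi$ is $C^2$, $D\psi$ is $C^1$, and the pushed-forward fields $Z,V$ come out $C^1$ rather than merely continuous — which is exactly why the approximating forms $\eta_k$ were arranged to be $C^2$ at the start of Section~\ref{strategy}. An essentially equivalent alternative would be to extend $\partial/\partial x^1|_{\mathcal S}$ and $\partial/\partial x^3|_{\mathcal S}$ off $\mathcal S$ by the pushforwards $(e^{tY})_*$, which commute with $Y$ by construction, and then deduce $[Z,V]\equiv 0$ by combining the Jacobi identity (which yields $[Y,[Z,V]]=0$) with the facts that $[Z,V]$ vanishes on $\mathcal S$ and that a $Y$-invariant field vanishing on a section transversal to $Y$ must vanish on the saturated neighbourhood; the pushforward-of-coordinates argument above simply packages all of this at once.
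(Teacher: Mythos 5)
Your proof is correct and is essentially the paper's own argument: your map $\psi(s^1,s^3,t)=e^{tY}(s^1,0,s^3)$ is the paper's diffeomorphism $\phi$ up to reordering of the variables, and you obtain $Z,V$ as pushforwards of coordinate fields, commutativity from naturality of the Lie bracket, and $C^1$ regularity from the $C^2$ flow of $Y$, exactly as in the text. The only (harmless) difference is that you check $Z,V$ span $T\mathcal S$ by restricting to $t=0$, whereas the paper computes explicit integral formulas for $e^{tY}_*\partial/\partial x^1$ and $e^{tY}_*\partial/\partial x^3$.
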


\begin{proof} We  first recall that the neighborhood $\mathcal{U}$ is parametrized in such a way that any point can be joined to a point of $\mathcal S$ by an integral curve of $Y$, and so we can choose $\epsilon>0$, and modify \( \mathcal U \) slightly, such that the  map
 $$\phi(t_1,t_2,t_3)=e^{t_1Y}\circ e^{t_2\frac{\partial}{\partial x^1}}\circ e^{t_3\frac{\partial}{\partial
 x^3}}(x_0)$$
 is a diffeomorphism from  $(-\epsilon,\epsilon)^{3}$ to $\mathcal{U}$.
We define $$ Z:= \phi_*\frac{\partial}{\partial t_2} \quad\text{ and } \quad V:=\phi_*\frac{\partial}{\partial t_3} $$ where the subscript \( * \), here and below, denotes the standard push-forward of vector fields. Observe that by the chain rule, for every \(  \bar t=(t_{1}, t_{2}, t_{3})\in (-\epsilon, \epsilon)^{3}  \) we have \[ \phi_*\frac{\partial}{\partial t_2}(\phi(\bar t))=\frac{\partial\phi}{\partial t_2}(\bar t)= e^{t_1Y} _*\frac{\partial }{\partial x^1}(\phi(\bar t)) \] and \[ \phi_*\frac{\partial}{\partial t_3}(\phi(\bar t))=\frac{\partial\phi}{\partial t_3} (\bar t)= e^{t_1Y}_*\frac{\partial }{\partial x^3}(\phi(\bar t)). \] Since the vector field $Y$ is  $C^2$ it follows that  $e^{tY}_*$ is $C^1$ which implies that the vector fields $Z$ and $V$ are $C^1$. By the naturality of the Lie bracket and observing that $Y= \phi_*\frac{\partial}{\partial t_1}$ we have $$ [V,Y]=\left[\phi_*\frac{\partial}{\partial t_3},\phi_*\frac{\partial}{\partial t_1}\right]= \phi_*\left[\frac{\partial}{\partial t_3},\frac{\partial}{\partial t_1}\right]=0 $$ and similarly $[Z,Y]=[Z,V]=0$. This shows that the vector fields commute.  Now we are only left to prove that $\mathcal S$ is spanned by $V$ and $Z$. Since \( \mathcal S \) is by definition the integral surface of the local coordinates \( \partial/\partial x^1 \) and \( \partial/\partial x^3 \), it is sufficient to show that \( V \) and \( Z \) span this plane. We will show this by computing explicit formulas for the vector fields. By standard calculus for vector fields on manifolds \cite{AgrSac04, Gro96}, for any \(  x\in \mathcal U  \), we have that $$ \frac{d}{dt}\left(e^{tY}_*\frac{\partial}{\partial x^3}|_x\right)=e^{tY}_*\left[\frac{\partial}{\partial x^3},Y\right]|_{x}= \frac{\partial b}{\partial x^3}\circ e^{-tY}(x) \cdot e^{tY}_*\frac{\partial}{\partial x^3}|_x. $$ Integrating  both side gives $$e^{tY}_*\frac{\partial}{\partial x^3}|_x=\exp\left(\int_0^{t}\frac{\partial b}{\partial x^3}\circ e^{-\tau Y}(x)d\tau\right)\frac{\partial}{\partial x^3}|_x $$ which shows that \( V \) always lies in the direction \( \partial/\partial x^3 \). By the same calculations we also get $$ e^{tY}_*\frac{\partial}{\partial x^1}|_x=\frac{\partial}{\partial x^1}|_x+\int_0^t\frac{\partial b}{\partial x^1}\circ e^{-sY}(x) \exp\left(\int_{s}^{t}\frac{\partial b}{\partial x^3}\circ e^{-\tau Y}(x)d\tau\right)ds\frac{\partial}{\partial x^3}|_x $$ which shows that \( Z \) always lies in the span of \( \partial/\partial x^1 \) and \( \partial/\partial x^3 \). Therefore we have that \( V \) and \( Z  \) span the tangent space of \( \mathcal S \). \end{proof}

To complete the proof of Proposition \ref{C1} it is sufficient to show that \( \alpha \) is \( C^1 \) along the vector fields \( Z \) and \( V \) defined above. We will need the following simple fact which constitutes the main motivation for our specific choice of the coordinate system.

\begin{lem}\label{time} If  $x$ and $y$ belong to the same integral curve of $V$ or to the same integral curve of $Z$, then  we have
 $$t_x=t_y.$$
\end{lem}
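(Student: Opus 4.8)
The plan is to use the two structural facts about $V$ and $Z$ established in Proposition~\ref{cano}: that each of them commutes with $Y$, and that each of them is everywhere tangent to $\mathcal S$. Recall that, by the way $\mathcal U$ was chosen in Section~\ref{sec:pert}, every point $x\in\mathcal U$ lies on a unique integral curve of $Y$ meeting $\mathcal S$, and the number $t_x$ is characterised by the condition $e^{-t_xY}(x)\in\mathcal S$; in particular the ``foot point'' $p_x:=e^{-t_xY}(x)\in\mathcal S$ is uniquely determined by $x$ within $\mathcal U$.

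First I would record two elementary consequences of Proposition~\ref{cano}. On the one hand, the flow of $V$ (and likewise that of $Z$) preserves $\mathcal S$: since $V$ is tangent to $\mathcal S$ at every point of $\mathcal S\cap\mathcal U$, an integral curve of $V$ issuing from a point of $\mathcal S$ remains in $\mathcal S$ for as long as it remains in $\mathcal U$. On the other hand, since $[V,Y]=0$, the local flows of $V$ and $Y$ commute, $e^{\tau Y}\circ e^{sV}=e^{sV}\circ e^{\tau Y}$ wherever both sides are defined; this is the standard consequence of the vanishing of the Lie bracket for $C^1$ vector fields. The same two statements hold for $Z$ in place of $V$, using $[Z,Y]=0$ and the tangency of $Z$ to $\mathcal S$.

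Now suppose $x$ and $y$ lie on the same integral curve of $V$, so that $y=e^{sV}(x)$ for some $s$. Applying the flow $e^{-t_xY}$ and using commutativity,
\[
e^{-t_xY}(y)=e^{-t_xY}\bigl(e^{sV}(x)\bigr)=e^{sV}\bigl(e^{-t_xY}(x)\bigr)=e^{sV}(p_x).
\]
Since $p_x\in\mathcal S$ and the flow of $V$ preserves $\mathcal S$, the right-hand side lies in $\mathcal S$. Thus the integral curve of $Y$ through $y$ meets $\mathcal S$ after flowing for time $-t_x$, and by uniqueness of the foot point this forces $t_y=t_x$. The case in which $x$ and $y$ lie on a common integral curve of $Z$ is handled verbatim, replacing $V$ by $Z$.

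The only delicate point is the bookkeeping of domains: one must make sure that $\mathcal U$ is small enough that the relevant segments of $V$- and $Z$-orbits through points of $\mathcal S\cap\mathcal U$ stay inside $\mathcal U$ (so that both the $\mathcal S$-invariance of these flows and the uniqueness of the foot point genuinely apply), and that the flows $e^{sV}$, $e^{\tau Y}$ are defined on the relevant parameter ranges. This is compatible with, and indeed guaranteed by, the successive shrinkings of $\mathcal U$ already carried out in Sections~\ref{sec:pert}--\ref{sec:diff}; once these are in place, the lemma follows immediately from the commutation relations of Proposition~\ref{cano}.
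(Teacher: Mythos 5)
Your argument is correct and is essentially the paper's own proof: both use that $[V,Y]=0$ (resp.\ $[Z,Y]=0$) makes the $Y$-flow carry $V$-orbits to $V$-orbits (equivalently, the flows commute), and that $V,Z$ being tangent to $\mathcal S$ keeps the transported point in $\mathcal S$, so the hitting time is the same. The extra remarks on signs, uniqueness of the foot point, and domain bookkeeping are fine but add nothing beyond what the paper's shorter proof implicitly uses.
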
 \begin{proof}
 Let $x$ and $y$ be in the same integral curve of $V$. Then, since $[Y,V]=0$, it follows that  $e^{t_xY}(x)$ and
 $e^{t_xY}(y)$ are
 in the same integral curve of $V$. Since $e^{t_xY}(x)\in \mathcal S$ and $V\in T\mathcal S$ then $e^{t_xY}(y)\in
 \mathcal S$ and it follows that
 $t_x=t_y.$ The proof for $Z$ is exactly the same.
\end{proof}

\begin{proof}[Proof of Proposition \ref{C1}] To show that  $\alpha$ is differentiable along the vector fields  $Z$ and $V$ we will show directly from first principle that   for every $x\in\mathcal{U}$, the limits $$\lim_{\delta\to0}\frac{\alpha(x)-\alpha(e^{\delta V}(x))}{\delta} \quad \text{ and } \quad \lim_{\delta\to0} \frac{\alpha(x)-\alpha(e^{\delta Z}(x))}{\delta} $$
 exist. We will prove the statement for the first limit, the second follows by exactly the same arguments. We fix some
 $x\in\mathcal{U}$ and for $\delta\neq 0$,  by Lemma \ref{time} we have $t_x=t_{e^{\delta V}(x)}=:t$.
 To simplify the notation in the calculations below, we shall write
\[ \mathcal B(\delta, \tau) := \exp\left(\int_{\tau}^{t}\frac{\partial b}{\partial x^3}(\mathcal{Y}_{e^{\delta V}(x)}(s))ds\right). \] Notice that for \( \delta=0 \) we have \( e^{\delta V}(x)=x \) and so
 we have
\[ \alpha(x) = \int_0^{t}h(\mathcal{Y}_x(\tau))\mathcal B(0, \tau) d\tau. \] Then $$ \alpha(x)-\alpha(e^{\delta V}(x))=\int_0^{t}h(\mathcal{Y}_x(\tau)) \mathcal B(0, \tau)d\tau -\int_0^{t}h(\mathcal{Y}_{e^{\delta V}(x)}(\tau))\mathcal B(\delta, \tau)d\tau. $$ By adding and subtracting the term $$ \int_0^t h(\mathcal{Y}_x(\tau)) \mathcal B(\delta, \tau) d\tau $$ to the right hand side we get \[ \begin{aligned} \alpha(x)-\alpha(e^{\delta V}(x)) =\int_0^{t}&h(\mathcal{Y}_x(\tau)) \mathcal B(0, \tau)d\tau -\int_0^t h(\mathcal{Y}_x(\tau)) \mathcal B(\delta, \tau) d\tau \\ &+\int_0^t h(\mathcal{Y}_x(\tau)) \mathcal B(\delta, \tau) d\tau -\int_0^{t}h(\mathcal{Y}_{e^{\delta V}(x)}(\tau))\mathcal B(\delta, \tau)d\tau \\ =\int_0^{t}&h(\mathcal{Y}_x(\tau)) [\mathcal B(0, \tau) -  \mathcal B(\delta, \tau)] d\tau \\ &+\int_0^t [h(\mathcal{Y}_x(\tau)) - h(\mathcal{Y}_{e^{\delta V}(x)}(\tau))]\mathcal B(\delta, \tau)d\tau. \end{aligned} \] Dividing both sides  by \( \delta \) it is therefore sufficient to show that the limit exists for each integral on the last two lines above. For the first integral notice that \( h(\mathcal Y_x(\tau)) \) does not depend on \( \delta \) and therefore it is sufficient to show that \[ \lim_{\delta\to 0}\frac{\mathcal B(0, \tau) -  \mathcal B(\delta, \tau)}{\delta} \] exists.  To see this, notice first that it is equal to \[\lim_{\delta\to 0}\frac{1}{\delta}\left[\exp\left(\int_{\tau}^{t}\frac{\partial b}{\partial x^3}(\mathcal{Y}_x(s))ds\right)-\exp\left(\int_{\tau}^{t}\frac{\partial b}{\partial x^3}(\mathcal{Y}_{e^{\delta V}(x)}(s))ds\right)\right], \] This  is by definition the directional derivative of the function \begin{equation}\label{eq:dirder}
 \exp \int_\tau^t \frac{\partial b}{\partial x^3} (\mathcal Y_x(s))ds
 \end{equation}
in the direction of \( V \). Since \( {\partial b}/{\partial x^3} \) is \( C^1 \) it follows that \eqref{eq:dirder} is also \( C^1 \) and therefore this directional derivative exists. Similarly, for the second integral above, the limit of \( \mathcal B(\delta, \tau)\) as \( \delta\to 0 \) is just \( \mathcal B(0, \tau) \) (and thus exists), and so it is sufficient to show that the limit \[ \lim_{\delta\to 0}\frac{h(\mathcal{Y}_x(\tau)) - h(\mathcal{Y}_{e^{\delta V}(x)}(\tau))}{\delta}. \] exists. Again, this is exactly the directional derivative of \( h \) in the direction of \( V \). Since \( h \) is \( C^1 \) this derivative exists. This proves that  $\alpha$ is $C^1.$
 \end{proof}

\section{Involutivity}\label{sec:invol}

In this Section we prove \[ \left[X+\alpha\frac{\partial }{\partial x^3},Y\right]=0. \] This implies involutivity as required, since the vanishing of the Lie bracket for a \(  C^{1}  \) local frame of a \(  C^{1}  \) distribution is well known to be equivalent to the involutivity condition \(  \eta\wedge d\eta=0  \) given above; this follows for example from Cartan's formula given in \eqref{invariant} below, or see any standard reference such as \cite{Lee}. By the linearity of the Lie bracket we have \[ \left[X+\alpha\frac{\partial }{\partial x^3},Y\right] = [X,Y] + \left[\alpha\frac{\partial }{\partial x^3},Y\right] \] and, applying  the general formula \(  [\varphi X, \psi Y]= \varphi X(\psi)Y- \psi Y(\varphi)X+\varphi\psi [X,Y]  \) for \(  C^{1}  \) functions \(  \varphi, \psi \), where \(  X(\psi), Y(\varphi)  \) denote the ``directional derivatives'' of the functions \(  \psi, \varphi  \) in the directions of the vector fields \(  X, Y  \) respectively, we  get \[ \left[\alpha\frac{\partial }{\partial x^3},Y\right] =-Y(\alpha)\frac{\partial }{\partial x^3}+\alpha\frac{\partial b}{\partial x^3}\frac{\partial }{\partial x^3} \] Notice that this bracket lies in the \(  x^{3}  \) direction. Substituting  above  and using the fact that \(  [X,Y] = h \partial/\partial x^{3}  \) also lies in the \(  x^{3}  \) direction we get \begin{equation}\label{eq:inv} \left[X+\alpha\frac{\partial }{\partial x^3},Y\right] =\left(h+\alpha\frac{\partial b}{\partial x^3}-Y(\alpha)\right)\frac{\partial }{\partial x^3}. \end{equation} Thus the involutivity of \(  \Delta_{\epsilon}  \) is equivalent to the condition that  the bracket on the left hand side of \eqref{eq:inv} is equal to  0, or equivalently that \(  Y(\alpha) = h+\alpha {\partial b}/{\partial x^3}  \), i.e.
  that \(  \alpha  \) is a solution to the partial differential equation
\begin{equation}\label{eq:inv2} Y(u)=h+ u\frac{\partial b}{\partial x^3}. \end{equation} To see  that \(  \alpha  \) is a solution of \eqref{eq:inv2} note that by the definition of \(  \alpha  \) in \eqref{alpha}, for any integral curve \(  \mathcal Y  \) of \(  Y  \) in \(  \mathcal U  \) parametrized so that \(  \mathcal Y(0)\in \mathcal S  \) and for any \(  |t|\leq t_{0}  \)  we have \[ \alpha(\mathcal{Y}(t)) = \int_0^{t}h(\mathcal{Y}(\tau))\exp\left(\int_{\tau}^{t}\frac{\partial b}{\partial x^3}(\mathcal{Y}(s))ds\right)d\tau. \] Differentiating \(  \alpha  \) along \(  \mathcal Y  \) we get \[ \begin{aligned} Y(\alpha)(\mathcal{Y}(t))  = &\frac{d}{dt}\alpha(Y(t))  \\ =& \ h(\mathcal{Y}(t))\exp\left(\int_{t}^{t}\frac{\partial b}{\partial x^3}(\mathcal{Y}(s))ds\right) \\ +& \frac{\partial b}{\partial x^3}(\mathcal{Y}(t))\int_0^{t}h(\mathcal{Y}(\tau))\exp\left(\int_{\tau}^{t}\frac{\partial b}{\partial x^3}(\mathcal{Y}(s))ds\right)d\tau \\ =& \ h(\mathcal{Y}(t)) + \frac{\partial b}{\partial x^3}(\mathcal{Y}(t)) \alpha(\mathcal{Y}(t)) \end{aligned} \] which proves that $\alpha$ is the required solution of \eqref{eq:inv2}.

\section{Perturbation bounds}\label{pertbounds}

In this Section we prove the upper bound on the norm of \(  \alpha  \) which gives the upper bound required in the statement of   Theorem \ref{solution}. Notice first of all that by the definition of the function \(  h  \) in \eqref{h} we have \begin{equation}\label{alpha1} \begin{aligned} \alpha(x) &=\int_0^{t_x}h(\mathcal{Y}_x(\tau))\exp\left(\int_{\tau}^{t_x}\frac{\partial b}{\partial x^3}(\mathcal{Y}_x(s))ds\right)d\tau
 \\ &\leq
 \int_0^{t_x}\|[X,Y](\mathcal{Y}_x(\tau))\|\exp\left(\int_{\tau}^{t_x}\frac{\partial b}{\partial
 x^3}(\mathcal{Y}_x(s))ds\right)d\tau
\end{aligned} \end{equation} We will estimate the two terms in two Lemmas.

\begin{lem}\label{bound1} \(  \|[X,Y](\mathcal{Y}_x(\tau))\| \leq  \{\|X\|_{\mathcal{Y}_x(\tau)} \|Y\|_{\mathcal{Y}_x(\tau)} \|\eta\wedge d\eta\|_{\mathcal{Y}_x(\tau)} \} \) \end{lem}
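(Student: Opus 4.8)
The plan is to express the Lie bracket $[X,Y]$ in terms of the form $\eta$ and its exterior derivative. Since $\Delta_\epsilon = \ker(\eta)$ and $\{X,Y\}$ is a local frame for $\Delta_\epsilon$, we have $\eta(X) = \eta(Y) = 0$ identically on $\mathcal U$. First I would recall the invariant (Cartan) formula for the exterior derivative of a $1$-form, namely
\[
d\eta(X,Y) = X(\eta(Y)) - Y(\eta(X)) - \eta([X,Y]).
\]
Since $\eta(X)$ and $\eta(Y)$ both vanish identically, the first two terms drop out and we are left with $\eta([X,Y]) = -d\eta(X,Y)$. This is the key algebraic identity; it converts the (a priori hard to estimate) bracket into something controlled by $\eta$ and $d\eta$.

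Next I would use the specific form \eqref{h} of the bracket, namely $[X,Y] = h\,\partial/\partial x^3$, so that $\eta([X,Y]) = h\,\eta(\partial/\partial x^3)$. Combining with the identity above gives $|h|\cdot|\eta(\partial/\partial x^3)| = |d\eta(X,Y)|$. The quantity $|d\eta(X,Y)|$ is bounded by $\|d\eta\|\cdot\|X\wedge Y\| \le \|d\eta\|\,\|X\|\,\|Y\|$ pointwise; more precisely, since $\eta(X)=\eta(Y)=0$, the $2$-form $\eta\wedge d\eta$ evaluated on the triple $(X,Y,\partial/\partial x^3)$ is, up to sign, exactly $\eta(\partial/\partial x^3)\,d\eta(X,Y)$ (the other terms in the expansion of the wedge vanish because they carry a factor $\eta(X)$ or $\eta(Y)$). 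Hence
\[
\bigl|\eta(\partial/\partial x^3)\bigr|\,\bigl|d\eta(X,Y)\bigr| = \bigl|(\eta\wedge d\eta)(X,Y,\partial/\partial x^3)\bigr| \le \|\eta\wedge d\eta\|\,\|X\|\,\|Y\|\,\|\partial/\partial x^3\|,
\]
and since $\partial/\partial x^3$ is a coordinate unit vector its norm is $1$. Therefore $|h|\cdot|\eta(\partial/\partial x^3)|^2 \le \|\eta\wedge d\eta\|\,\|X\|\,\|Y\|$, but what we actually want is a bound on $\|[X,Y]\| = |h|$ itself, so I need a lower bound on $|\eta(\partial/\partial x^3)|$.

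The remaining point, and the one requiring a little care, is the lower bound $|\eta(\partial/\partial x^3)| \ge 1$ (or at least a bound that absorbs cleanly). This is where the normalization hypothesis $\|\eta\|_x \ge 1$ from Theorem \ref{solution} enters together with the fact that $\Delta_\epsilon$ is transversal to all three coordinate axes: since $\eta$ annihilates $X = \partial/\partial x^1 + a\,\partial/\partial x^3$ and $Y = \partial/\partial x^2 + b\,\partial/\partial x^3$, writing $\eta = \eta_1 dx^1 + \eta_2 dx^2 + \eta_3 dx^3$ we get $\eta_1 = -a\eta_3$ and $\eta_2 = -b\eta_3$, so $\eta = \eta_3(-a\,dx^1 - b\,dx^2 + dx^3)$ and $\|\eta\|^2 = \eta_3^2(1 + a^2 + b^2)$; combined with $\|\eta\| \ge 1$ this does not immediately give $|\eta_3| \ge 1$, so instead I expect the cleanest route is to observe that after the harmless rescaling of $\eta$ one may in fact normalize $\eta_3 = \eta(\partial/\partial x^3) = 1$ directly (the form is $\ker$-equivalent to $-a\,dx^1 - b\,dx^2 + dx^3$), which is consistent with $\|\eta\|_x \ge 1$ and makes $|h| = |d\eta(X,Y)| = |(\eta\wedge d\eta)(X,Y,\partial/\partial x^3)| \le \|\eta\wedge d\eta\|\,\|X\|\,\|Y\|$ at the point $\mathcal Y_x(\tau)$, which is exactly the claim. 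The main obstacle is thus not the computation but getting this normalization bookkeeping right so that the coefficient in front is exactly $\|X\|\,\|Y\|\,\|\eta\wedge d\eta\|$ with no extra constants; everything else is the Cartan formula plus the definition of the operator norm of a $2$-form.
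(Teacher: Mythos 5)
Your argument is correct and follows essentially the same route as the paper's proof: Cartan's formula to get $\eta([X,Y])=-d\eta(X,Y)$, the observation that $(\eta\wedge d\eta)(\partial/\partial x^{3},X,Y)=\eta(\partial/\partial x^{3})\,d\eta(X,Y)$ since $\eta(X)=\eta(Y)=0$, and a multilinearity bound. The only cosmetic difference is the normalization: the paper writes $\eta=c(dx^{3}-a\,dx^{1}-b\,dx^{2})$ and assumes $1\leq c\leq\|\eta\|$ (dividing by $c^{2}\geq 1$), whereas you rescale so that $\eta(\partial/\partial x^{3})=1$; these amount to the same bookkeeping step you correctly identified as the only delicate point.
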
 \begin{lem}\label{bound2} \( \exp\left(\int_{\tau}^{t_x}\frac{\partial b}{\partial x^3}(\mathcal{Y}_x(s))ds\right)
 \leq\|\eta\|_{\mathcal{Y}_x(\tau)}\exp(\widetilde{d\eta}(\mathcal{Y}_x(\tau), t_{x}-\tau)
\) \end{lem}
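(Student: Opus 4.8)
The plan is to express the integrand $\partial b/\partial x^3$ directly in terms of the $1$-form $\eta$ and its exterior derivative, and then to recognize the resulting integral as essentially $\widetilde{d\eta}$ along the $Y$-flow. First I would write $\eta$ in coordinates as $\eta = \eta_1\,dx^1 + \eta_2\,dx^2 + \eta_3\,dx^3$. Since $\Delta_\epsilon = \ker(\eta)$ and $\{X,Y\}$ of the form \eqref{X-def} spans $\Delta_\epsilon$, evaluating $\eta$ on $X$ and $Y$ gives $\eta_1 = -a\,\eta_3$ and $\eta_2 = -b\,\eta_3$, hence $b = -\eta_2/\eta_3$. Differentiating this expression for $b$ with respect to $x^3$ yields $\partial b/\partial x^3$ as a combination of $\partial\eta_2/\partial x^3$, $\partial\eta_3/\partial x^3$, and the $\eta_i$ themselves; comparing with the coordinate expression for $d\eta$ (with components $d\eta_{k,1}, d\eta_{k,2}, d\eta_{k,3}$ as in Section \ref{sec:ave}) one identifies the numerator, after using $\eta_2 = -b\eta_3$, with exactly the component of $d\eta$ that enters $\widetilde{d\eta}_{2}$, up to a factor of $1/\eta_3$. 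The upshot is an identity of the schematic form
\[
\frac{\partial b}{\partial x^3}(\mathcal Y_x(s)) = \frac{d\eta_2(\mathcal Y_x(s))}{\eta_3(\mathcal Y_x(s))} + \text{(lower-order terms)},
\]
where the integral of $d\eta_2$ along the $Y$-flow is precisely $\widetilde{d\eta}_2(\mathcal Y_x(\tau), t_x - \tau)$ after the change of variables $s \mapsto s - \tau$ matching the $Y$-flow parametrization used in the definition of $\widetilde{d\eta}$.

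Next I would integrate this identity over $s \in [\tau, t_x]$ along $\mathcal Y_x$ and exponentiate. The term coming from $d\eta_2$ produces $\exp(\widetilde{d\eta}_2(\mathcal Y_x(\tau), t_x-\tau)) \le \exp(\widetilde{d\eta}(\mathcal Y_x(\tau), t_x-\tau))$ directly, since $\widetilde{d\eta} = \max\{\widetilde{d\eta}_1, \widetilde{d\eta}_2\}$. The remaining ``lower-order'' contribution — which should be $\int_\tau^{t_x} \frac{d}{ds}\log|\eta_3(\mathcal Y_x(s))|\,ds$ or something equivalent arising from how $\eta_3$ varies along the flow — telescopes to $\log(|\eta_3(\mathcal Y_x(\tau))| / |\eta_3(\mathcal Y_x(t_x))|)$, and since $|\eta_3| \le \|\eta\|$ pointwise and $|\eta_3|$ is bounded below away from zero by the transversality of $\Delta_\epsilon$ to $\partial/\partial x^3$, this telescoped term is bounded by $\|\eta\|_{\mathcal Y_x(\tau)}$ (possibly after the normalization $\|\eta\|_x \ge 1$ already in force from Theorem \ref{solution}, which makes the denominator harmless). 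Combining the two contributions gives the claimed bound $\exp(\int_\tau^{t_x} \partial b/\partial x^3) \le \|\eta\|_{\mathcal Y_x(\tau)} \exp(\widetilde{d\eta}(\mathcal Y_x(\tau), t_x - \tau))$.

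The main obstacle I anticipate is purely bookkeeping: correctly matching the coordinate component of $d\eta$ that appears after differentiating $b = -\eta_2/\eta_3$ with the specific component $d\eta_{k,2}$ defined in Section \ref{sec:ave}, and getting the flow parametrization to line up (the definition of $\widetilde{d\eta}_2$ integrates $d\eta_{k,2}\circ e^{\tau Y_k}$ from $0$, whereas here the natural integration is from $\tau$ to $t_x$ along $\mathcal Y_x$, so one must invoke the flow property $\mathcal Y_x(s) = e^{(s-\tau)Y}(\mathcal Y_x(\tau))$ and a shift of the integration variable). The other delicate point is making sure the auxiliary ``$\eta_3$ along the flow'' term genuinely telescopes rather than contributing a factor that grows with the integration time; this relies on the fact that the extra terms in $\partial b/\partial x^3$ not captured by $d\eta_2/\eta_3$ are exactly a total $s$-derivative of $\log|\eta_3|$ along $\mathcal Y_x$, which should be checked by a direct computation using $\partial b/\partial x^3 \cdot \eta_3 = \partial(b\eta_3)/\partial x^3 - b\,\partial\eta_3/\partial x^3$ and $b\eta_3 = -\eta_2$. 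None of these steps is conceptually hard, but the signs and the index of the $d\eta$-component must be tracked carefully.
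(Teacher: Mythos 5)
Your argument is essentially the paper's own proof: writing $\eta=c(dx^3-a\,dx^1-b\,dx^2)$ (your $\eta_3$ is this $c$), your identity $\partial b/\partial x^3 = d\eta_2/c - Y(\log|c|)$ is exactly what the paper integrates along $\mathcal Y_x$ and exponentiates, with the telescoped ratio $c(\mathcal Y_x(\tau))/c(\mathcal Y_x(t_x))$ bounded by $\|\eta\|_{\mathcal Y_x(\tau)}$ via the normalization $1\leq c\leq\|\eta\|$ and the shift of variable matching the definition of $\widetilde{d\eta}_2$. The only point to state more carefully is that this normalization $c\geq 1$ (obtained by rescaling $\eta$, not merely from transversality plus $\|\eta\|\geq 1$) is also what allows replacing $\int d\eta_2/\eta_3$ by $\int d\eta_2=\widetilde{d\eta}_2$ in the exponent, exactly as in the paper.
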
 Combining these two estimates, substituting into \eqref{alpha1}, and using  the bound on \(  t  \) given by \eqref{t}, we obtain \begin{align*} |\alpha(x)| &\leq t \sup_{x\in\mathcal U} \{\|\eta\|_{x}\exp (\widetilde{d\eta}(t,x))  \|X\|_{x} \|Y\|_{x} \|\eta\wedge d\eta\|_{x} \} \\ &\leq \sup_{x\in\mathcal U}\{ \|\eta\|_{x}\|\eta\wedge d\eta\|_{x}\exp(\widetilde{d\eta}(t,x))\} \end{align*} which is the required bound and thus completes the proof of Theorem \ref{solution} modulo the proof of the two Lemmas. For the proof of  both Lemmas,  notice first  that, since the vector fields \(  X, Y  \) defined in \eqref{X-def}  lie in   \(  \ker(\eta)  \), any \(  C^{1}  \) form \(  \eta  \)   such that  $\Delta=ker(\eta)$ is of the form \begin{equation}\label{eta} \eta=c(dx^3-adx^1-bdx^2) \end{equation} for some non-vanishing \(  C^{1}  \)  function \(  c(x)  \) defined in \(  \mathcal U  \). Notice that \(  \eta(\partial/\partial x^{3}) = c  \) and therefore \(  c\leq \|\eta\|  \) everywhere and  we can
 even assume, up to multiplying \(  \eta  \) by a (possibly negative) scalar if necessary,  that
\begin{equation}\label{c}
 1\leq c\leq\|\eta\|
\end{equation} We can now prove the two Lemmas. \begin{proof}[Proof of Lemma \ref{bound1}] All the estimates below are made for a given fixed point in \(  \mathcal U  \) and so for simplicity we omit this from the notation. By the definition of \(  h  \) in \eqref{h} we have \(  \eta([X, Y]) = h \eta (\partial/\partial x^{3}) = ch  \) and therefore \( h =  {\eta ([X, Y])}/{c} \) and in particular \[ \|[X, Y]\| = \frac{|\eta([X,Y])|}{c} \] Since \(  X, Y \in \ker (\eta) \),  we have \(  \eta(X)=\eta(Y)=0  \) and the ``Cartan formula'' gives \begin{equation}\label{invariant}
 d\eta(X,Y)=X(\eta(Y))-Y(\eta(X))-\eta([X,Y])= -\eta([X,Y]).
\end{equation} On the other hand, we have \[ \eta\wedge d\eta \left(\frac{\partial}{\partial x^{3}}, X, Y\right) = \eta \left(\frac{\partial}{\partial x^{3}}\right) d\eta(X,Y)= c d\eta (X,Y). \] Substituting into the equations above we then get \[ \|[X, Y]\| = \frac{|\eta([X,Y])|}{c} = \frac{|d\eta(X,Y)|}{c}= \frac{1}{c^{2}} \left|\eta\wedge d\eta \left(\frac{\partial}{\partial x^{3}}, X, Y\right)\right| \] Using that \(  c>1  \) and the multilinearity of \(  \eta\wedge d\eta  \) this gives the bound \( \|[X, Y]\| \leq \|X \|\|Y\| \|\eta\wedge d\eta\| \) as required. \end{proof}

\begin{proof}[Proof of Lemma \ref{bound2}] By direct calculation we have
 $$
 \begin{aligned}
 d\eta&=(\frac{\partial c}{\partial x^1}+a\frac{\partial c}{\partial x^3}+c\frac{\partial a}{\partial x^3})dx^1\wedge
 dx^3
 +(\frac{\partial c}{\partial x^2}+b\frac{\partial c}{\partial x^3}+c\frac{\partial b}{\partial x^3})dx^2\wedge
 dx^3\\
 &+(a\frac{\partial c}{\partial x^2}-b\frac{\partial c}{\partial x^1}+c\frac{\partial a}{\partial x^2}-c\frac{\partial
 b}{\partial x^1})dx^1\wedge dx^2\\
 &=(X(c)+c\frac{\partial a}{\partial x^3})dx^1\wedge dx^3+(Y(c)+c\frac{\partial b}{\partial x^3})dx^2\wedge dx^3\\
 &+(a\frac{\partial c}{\partial x^2}-b\frac{\partial c}{\partial x^1}+c\frac{\partial a}{\partial x^2}-c\frac{\partial
 b}{\partial x^1})dx^1\wedge dx^2
\end{aligned}
 $$
On the other hand we can write $$ d\eta=d\eta_1dx^1\wedge dx^3+d\eta_2dx^2\wedge dx^3+d\eta_3dx^1\wedge dx^2 $$ and so,  by comparing the terms of the two formulae for $d\eta$, we have $$ Y(c)=-c\frac{\partial b}{\partial x^3}+d\eta_2. $$ Dividing both sides by $c$ gives $$\frac{Y(c)}{c}=-\frac{\partial b}{\partial x^3}+\frac{d\eta_2}{c}.$$ Since \(  Y(c)  \) is exactly the derivative of \(  c  \) along integral curves of \(  Y  \),  integrating along these integral curves we get $$ \log\left|\frac{c(\mathcal{Y}_{x}(t_{x}))}{c(\mathcal{Y}_{x}(\tau))}\right|=-\int_\tau^{t_{x}}\frac{\partial b}{\partial x^3}(\mathcal{Y}_{x}(s))ds+ \int_\tau^{t_{x}}\frac{d\eta_2(\mathcal{Y}_{x}(s))}{c(\mathcal{Y}_{x}(s))}ds $$ which implies $$ \int_\tau^{t_{x}}\frac{\partial b}{\partial x^3}(\mathcal{Y}_{x}(s))ds = \int_\tau^{t_{x}}\frac{d\eta_2(\mathcal{Y}_{x}(s))}{c(\mathcal{Y}_{x}(s))}ds -\log\left|\frac{c(\mathcal{Y}_{x}({t_{x}}))}{c(\mathcal{Y}_{x}(\tau))}\right| $$ hence we have $$\exp\left(\int_\tau^{t_{x}}\frac{\partial b}{\partial x^3}(\mathcal{Y}_{x}(s))d\tau\right)= \left|\frac{c(\mathcal{Y}_{x}(\tau))}{c(\mathcal{Y}_{x}({t_{x}}))}\right| \exp\left(\int_\tau^{t_{x}}\frac{d\eta_2(\mathcal{Y}_{x}(s))}{c(\mathcal{Y}_{x}(s))}ds\right). $$ Using   that  \(  1\leq c \leq \|\eta\|  \) by \eqref{c} we then get \[ \exp\left(\int_\tau^{t_{x}}\frac{\partial b}{\partial x^3}(\mathcal{Y}_{x}(s))d\tau\right) \leq \|\eta\|_{\mathcal{Y}_{x}(\tau)}\exp\left(\int_\tau^{t_{x}}d\eta_2(\mathcal{Y}_{x}(s)) ds\right). \] Notice that the integral on the right hand side is not exactly in the form used in the definition of \(  \widetilde{d\eta}_{2}  \) in Section \ref{sec:ave}, where the limits in the integral go from 0 to \(  t_{x}  \). Recalling that $\mathcal{Y}_{x}(s)=e^{(s-t_x)Y}(x)$, we have \[ \int_\tau^{t_{x}}d\eta_2(\mathcal{Y}_{x}(s)) ds= \int_\tau^{t_{x}}d\eta_2 \circ e^{(s-t_x)Y}(x) ds= \int_0^{t_{x}-\tau}d\eta_2\circ e^{(s-t_x)Y}(e^{\tau Y}(x))) ds \] This last integral is by definition equal to \(  \widetilde {d\eta_{2}}(\mathcal Y_{x}(\tau), t_{x}-\tau)  \) and so substituting into the expression above this completes the proof. \end{proof}

\section{Convergence}\label{sec:conv} In this section we prove Proposition \ref{prop:int}. We suppose throughout  that we have a sequence \(  \tilde\Delta^{(k)}  \) of \(  C^{1}  \) involutive distributions converging uniformly to a continuous distribution \(  \Delta  \) in some open set \(  \mathcal U  \). The involutivity of the distributions \( \widetilde{\Delta}^{(k)} \) implies that they are uniquely integrable by the classical Frobenius Theorem, but to prove the required convergence we will need to construct these integral manifolds rather explicitly.

We assume without loss of generality that the open set \(  \mathcal U  \) is contained inside some local chart and that   $\Delta$ is everywhere transversal to the coordinate axes in this local chart. By the convergence of the sequence of distributions \(  \tilde\Delta^{(k)}  \) to \(  \Delta  \), the same transversality property holds for \(  \tilde\Delta^{(k)}  \) for all sufficiently large \(  k  \). This  implies that each \(  \tilde\Delta^{(k)}  \) admits a local frame \(  \{X_{k}, Y_{k}\}  \) formed by \(  C^{1}  \) vector fields of the form
 \begin{equation}\label{X-defk}
 X_{k}=\frac{\partial}{\partial x^1}+a_{k}\frac{\partial}{\partial x^{3}}
 \quad \text{ and } \quad
 Y_{k}=\frac{\partial}{\partial x^2}+b_{k}\frac{\partial}{\partial x^{3}}.
 \end{equation}
 for \(  C^{1}  \) functions \(  a_{k}, b_{k}  \).
By the convergence of the sequence of distributions \(  \tilde\Delta^{(k)}  \) to \(  \Delta  \) it follows that the sequences of vector fields \(  X_{k}, Y_{k}  \) converge to continuous vector fields \(  X, Y  \) which form a continuous local frame of \(  \Delta   \)
 and have the form
  \begin{equation}\label{X-deflim}
 X=\frac{\partial}{\partial x^1}+a\frac{\partial}{\partial x^{3}}
 \quad \text{ and } \quad
 Y=\frac{\partial}{\partial x^2}+b\frac{\partial}{\partial x^{3}}.
 \end{equation}
 for continuous functions \(  a, b  \) (cf. \eqref{X-def}).
 Since the approximating vector fields \(  X_{k}, Y_{k}  \) are \(  C^{1}  \), their Lie bracket is well defined and
 their specific form implies it lies in the \(  \partial/\partial x^{3}  \) direction, see \eqref{h}, and in
 particular is transversal to \(  \tilde\Delta^{(k)}  \). Therefore by the involutivity of \(  \tilde\Delta^{(k)}  \)
 it follows that the vector fields commute, i.e.
 \begin{equation}\label{commute}
 [X_{k}, Y_{k}]=0.
 \end{equation}
Notice that of course we cannot draw the same conclusion for the vector fields \(  X, Y  \) since they are only continuous and the Lie bracket is not defined.

We now fix an open subset \(  \mathcal V \subset \mathcal U  \) and some sufficiently small \(  \epsilon>0  \) such that   for any \( x\in \mathcal V \), any  \( |s_1|, |s_2| \leq \epsilon \) and all sufficiently large \(  k  \), we have \[ W^{(k)}_{x}(s_1,s_2) := e^{s_1X_k}\circ e^{s_2 Y_k}(x) \in \mathcal U. \] Notice that  by \eqref{commute} we have that \begin{equation}\label{tang} \begin{aligned} \frac{\partial W^{(k)}_x}{\partial s_1}(s_1,s_2) &= X_k(W^{(k)}_x(s_1,s_2)) \\ \frac{\partial W^{(k)}_x}{\partial s_2}(s_1,s_2) &= Y_k(W^{(k)}_x(s_1,s_2)). \end{aligned} \end{equation} In particular the Jacobian of $W^{(k)}_x$ is non-zero everywhere on $[-\epsilon, \epsilon]^2$ and therefore we have a sequence of embeddings \[ W^{(k)}_x: [-\epsilon, \epsilon]^2 \rightarrow \mathcal U. \] By \eqref{tang}, the tangent spaces of \( W^{(k)}_x \) are exactly the hyperplanes of the distributions  \( \widetilde\Delta^{(k)} \) and therefore the images of the maps \( W^{(k)}_x \) are exactly the local
 integral manifolds of the distributions through the point \( x \).

\begin{lem} For every \(  x\in \mathcal V  \),  sequence  $\{W^{(k)}_x\}$ is equicontinuous and equibounded \end{lem}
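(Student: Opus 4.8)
The plan is to exhibit a uniform (in $k$ and in the parameters $s_1,s_2$) bound on both $|W^{(k)}_x(s_1,s_2)|$ and on a modulus of continuity for the maps $W^{(k)}_x$ on the fixed compact square $[-\epsilon,\epsilon]^2$. The key point is that all the relevant quantities are controlled by the vector fields $X_k,Y_k$, and these converge uniformly to the continuous vector fields $X,Y$; hence they are uniformly bounded in the $C^0$ norm, say $\|X_k\|,\|Y_k\|\leq M$ for all large $k$, with $M$ independent of $k$. Equiboundedness then follows immediately: for any $(s_1,s_2)\in[-\epsilon,\epsilon]^2$ we have, integrating the flow equations \eqref{tang} along the two legs of the path $0\to s_2\to(s_1,s_2)$,
\[
|W^{(k)}_x(s_1,s_2)-x|\leq \int_0^{|s_2|}\|Y_k\|\,d\tau+\int_0^{|s_1|}\|X_k\|\,d\tau\leq 2M\epsilon,
\]
so all the maps take values in a fixed compact subset of $\mathcal U$, uniformly in $k$ and in $x\in\mathcal V$.

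For equicontinuity I would again use \eqref{tang}: the maps $W^{(k)}_x$ are $C^1$ with partial derivatives $X_k(W^{(k)}_x(\cdot))$ and $Y_k(W^{(k)}_x(\cdot))$, both bounded in norm by $M$. Therefore each $W^{(k)}_x$ is Lipschitz on $[-\epsilon,\epsilon]^2$ with Lipschitz constant at most $\sqrt{2}\,M$ (or simply $2M$), a bound independent of $k$ and of $x$. A family of maps with a common Lipschitz constant is equicontinuous, which gives the second assertion. One should also record that the same estimates apply uniformly over $x\in\mathcal V$, which is what is needed later when applying Arzelà–Ascoli to extract a limiting integral surface; since $\mathcal V$ and $[-\epsilon,\epsilon]^2$ are fixed compact sets and $M$ is a single constant, this is automatic.

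The only mild subtlety — and the step I would be most careful about — is making sure the uniform bound $\|X_k\|,\|Y_k\|\leq M$ is legitimately available: this uses precisely that $\widetilde\Delta^{(k)}\to\Delta$ uniformly and that $\Delta$ is transversal to the coordinate axes, so that the functions $a_k,b_k$ in \eqref{X-defk} converge uniformly to the continuous functions $a,b$ of \eqref{X-deflim} and are in particular uniformly bounded on the relevant neighbourhood; this was already observed in the passage preceding the lemma. Once this is in hand, equiboundedness and equicontinuity are routine consequences of the flow identities \eqref{tang}, and there is no genuine obstacle. (An alternative, if one prefers not to invoke boundedness of the derivatives directly, is to note that each $e^{s_jX_k}$, $e^{s_jY_k}$ is a diffeomorphism onto its image inside $\mathcal U$ and use continuous dependence of flows on the vector field together with the uniform convergence $X_k\to X$, $Y_k\to Y$; but the derivative bound is cleaner.)
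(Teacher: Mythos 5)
Your proposal is correct and follows essentially the same route as the paper: equiboundedness because the images stay in the fixed neighbourhood $\mathcal U$, and equicontinuity because, by \eqref{tang}, the differential of $W^{(k)}_x$ has columns $X_k(W^{(k)}_x(\cdot))$ and $Y_k(W^{(k)}_x(\cdot))$, which are uniformly bounded thanks to the uniform convergence $X_k\to X$, $Y_k\to Y$. Your write-up merely makes explicit the uniform bound $M$ and the resulting Lipschitz constant, which the paper leaves implicit.
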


\begin{proof} We have that $W^{(k)}_x(0)=x$ so $W^{(k)}_x(s_1,s_2) \in  U$ and therefore equiboundedness is easy. For equicontinuity note that $DW^{(k)}(s_1,s_2)$ is a matrix whose columns are $X_k( W^{(k)}(s_1,s_2)$ and $Y_k( W^{(k)}(s_1,s_2)$. Therefore the differential is equibounded and so $W^{(k)}_x(s_1,s_2)$ is equicontinuous. \end{proof}

 By the Arzela-Ascoli Theorem
 which is the uniform limit of some subsequence of $\{W^{(k)}_x\}_{k=1}^{\infty}$(which we assume, without loss of
 generality, to be the full sequence from now on). To complete the proof of Proposition \ref{prop:int} it is therefore
 sufficient to show that \( W_x([-\epsilon, \epsilon]^2)  \) is an integral manifold of the limiting distribution \(
 \Delta \), i.e. that \( W_x \) is actually differentiable and that its tangent spaces coincide with the hyperplanes
 of \( \Delta \). Thus, letting
 $DW_x=DW_x(s_1,s_2)$ denote the matrix whose columns are are $X(W_x(s_1,s_2))$ and
 $Y(W_x(s_1,s_2))$ it is sufficient to prove the following

\begin{lem} $W_x$ is a differentiable function whose derivative is $DW$ \end{lem}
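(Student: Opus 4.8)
The plan is to show that the $C^1$ maps $W^{(k)}_x$ converge not just in $C^0$ (which Arzel\`a--Ascoli already gives) but in $C^1$, and that the $C^1$ limit must be $W_x$ with derivative $DW_x$. First I would recall from \eqref{tang} that each column of $DW^{(k)}_x(s_1,s_2)$ is $X_k$ or $Y_k$ evaluated at $W^{(k)}_x(s_1,s_2)$. Since $X_k \to X$ and $Y_k \to Y$ uniformly on $\mathcal U$, and since $W^{(k)}_x \to W_x$ uniformly on $[-\epsilon,\epsilon]^2$ by Arzel\`a--Ascoli, the composition $X_k \circ W^{(k)}_x$ converges uniformly to $X \circ W_x$ on $[-\epsilon,\epsilon]^2$ (here one uses that $X$ is continuous, hence uniformly continuous on the compact $\overline{\mathcal U}$, so $X \circ W^{(k)}_x \to X \circ W_x$ uniformly, and $(X_k - X)\circ W^{(k)}_x \to 0$ uniformly because $\|X_k - X\|_{C^0} \to 0$). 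The same holds for the $Y$-column. Therefore $DW^{(k)}_x \to DW_x$ uniformly on $[-\epsilon,\epsilon]^2$, where $DW_x$ denotes the matrix with columns $X(W_x(s_1,s_2))$ and $Y(W_x(s_1,s_2))$.

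Next I would invoke the standard fact from calculus that if a sequence of $C^1$ maps $W^{(k)}_x$ converges uniformly to a map $W_x$ and their derivatives $DW^{(k)}_x$ converge uniformly to some (automatically continuous) matrix-valued function $G$, then $W_x$ is $C^1$ and $DW_x = G$. Concretely, this follows from writing $W^{(k)}_x(s_1,s_2) - W^{(k)}_x(0,0) = \int_0^1 DW^{(k)}_x(\gamma(r))\,\gamma'(r)\,dr$ along a segment $\gamma$ from $(0,0)$ to $(s_1,s_2)$, and passing to the limit under the integral sign (justified by uniform convergence of the integrand), to conclude that $W_x(s_1,s_2) - x = \int_0^1 G(\gamma(r))\,\gamma'(r)\,dr$; differentiating this integral representation shows $W_x$ is $C^1$ with derivative $G = DW_x$. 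In particular the tangent space of the image surface $W_x([-\epsilon,\epsilon]^2)$ at each point is spanned by $X$ and $Y$, i.e. it equals $\Delta$, which is what is needed to complete Proposition \ref{prop:int}.

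The main obstacle, such as it is, is purely a matter of care with the composition limit: one needs that evaluating the vector fields along the converging surfaces is itself a uniformly convergent operation, and this requires the uniform continuity of the limiting vector fields $X, Y$ on a compact set together with the uniform $C^0$-convergence $X_k \to X$, $Y_k \to Y$. Both of these are already available — the former because $X, Y$ are continuous on the closure of the precompact neighbourhood $\mathcal U$, and the latter by hypothesis (the distributions $\widetilde\Delta^{(k)}$ converge uniformly to $\Delta$, which by the explicit form \eqref{X-defk}--\eqref{X-deflim} of the frames forces $a_k \to a$ and $b_k \to b$ uniformly). Everything else is the textbook "uniform convergence of derivatives implies differentiability of the limit" argument, applied coordinate-wise to the two columns of the Jacobian.
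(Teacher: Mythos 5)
Your argument is correct and is essentially the same as the paper's: both establish uniform convergence $DW^{(k)}_x \to DW_x$ via the identity \eqref{tang} together with $X_k\to X$, $Y_k\to Y$, $W^{(k)}_x\to W_x$, and then pass to the limit in the fundamental-theorem-of-calculus representation along a curve to conclude that $W_x$ is differentiable with derivative $DW_x$. Your extra remark on the uniform continuity of $X,Y$ needed for the composition limit is a detail the paper leaves implicit, but it does not change the route.
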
 \begin{proof} Notice first of all that from \eqref{tang} and the fact that \( X_k\to X \), \( Y_k\to Y \) and $W^{(k)}_x(s_1,s_2) \to W_x(s_1,s_2)$ it follows that the partial derivatives \( {\partial W^{(k)}_x}/{\partial s_i} (s_1,s_2)\) converge to \( X(W_x(s_1,s_2)) \) and \( Y(W_x(s_1,s_2)) \) respectively, and therefore, the derivative \( DW^{(k)}_x \) converges uniformly to \( DW_x \). Now, for any two points $p,q \in W_x([-\epsilon,\epsilon]^2)$ and a smooth curve $\gamma=\gamma(t)$ connecting $p$ to $q$ with \( \gamma(0)=p, \gamma(\tau)=q \),  the fundamental theorem of calculus implies that \[ W^{(k)}_x(q) = W^{(k)}_x(p) + \int_{0}^{\tau} DW^k_x(\gamma(t)) \circ \frac{d\gamma(t)}{dt}dt \] Thus, taking limits and exchanging the limit and the integral (which can be done due to uniform convergence) we get $$ W_x(q) = W_x(p) + \int_{0}^{\tau} DW_x(\gamma(t))\circ \frac{d\gamma(t)}{dt}dt $$ This implies that  $DW_x$ is the derivative of $W_x$ and thus proves in particular that \( W_x \) is differentiable as required. \end{proof}

\section{Uniqueness}\label{unicity} 

In this section we first complete the proof of Theorem \ref{Frobenius-weak} in the case of rank-1 distributions on surfaces. For clarity we restate the notation and the result in this simplest setting as Proposition \ref{uniq} below. We will then use this result to complete the proof of  Proposition \ref{thm:uniqueness} and thus complete the proof of Theorem \ref{Frobenius-weak} for rank 2 distributions on 3-dimensional manifolds.

 We consider a Riemannian surface \( \mathcal S \) and a vector field \( X \) on \( \mathcal S \) which we can suppose
 to be  given as  $X=ker(w)$ for some continuous \( 1 \)-form \( w \) defined on  $\mathcal{S}$. We can restrict our
 attention to a local chart \( \mathcal U \) with local coordinates  $(z^1,z^2,\mathcal{U})$ and suppose without loss
 of generality that \( X \) is everywhere transversal to both  coordinate axes \( \partial/\partial z^{1},
 \partial/\partial z^{2} \) in \( \mathcal U \) and that therefore in particular it can be written in the form
 \[
 X=\frac{\partial}{\partial z^1}+a \frac{\partial}{\partial z^2}
 \]
 for some non-zero continuous function \( a(z) \). If \(  w_{k}  \) is a sequence of \(  C^{1}  \) \(  1  \)-forms on
 \(  \mathcal U  \) with \(  w_{k}\to w  \) then, for all sufficiently large \(  k  \), the corresponding vector
 fields \(  X_{k}  \) will also be transversal to both axes and the corresponding vector fields \(  X_{k}=ker (w_{k})
 \) can also be written in the form
  \[
 X_{k}=\frac{\partial}{\partial z^1}+ a_{k}\frac{\partial}{\partial z^2}
 \]
 for \(  C^{1}  \) functions \(  a_{k}  \). Choosing some smaller domain \(  \mathcal U'\subset \mathcal U  \) there
 exists some \(  t_{0}>0  \) such that the flow \(  e^{\tau X_{k}}  \)  is well defined for all \(  x\in \mathcal U'
 \) and \(  |\tau|\leq t_{0}  \) and \(  e^{\tau X_{k}}(x)\in \mathcal U  \). Notice that in this case the external
 derivatives \(  dw_{k}  \) of the \(  1  \)-forms \(  w_{k}  \) have only one component and so, by some slight abuse
 of notation we can simply write \(  dw_{k}=dw_{k}dz^{1}\wedge dz^{2}  \). Then,  for every \(  z\in\mathcal U'   \)
 and every \(  |t|\leq t_{0}  \) define
 \[
\widetilde{dw}_{k}(z,t):=\int_{0}^{t}dw_{k}\circ e^{\tau X_{k}}(z) d\tau . \]
 With this notation we then have the analogue of Theorem \ref{Frobenius-weak} as follows.

\begin{prop} \label{uniq} Let \( X=ker (w) \) be a continuous vector field defined on a surface \(  \mathcal S  \). Suppose that for every point there is a local chart \( \mathcal U \) and a sequence of $C^1$ differential 1-forms $w_k$
 on \( \mathcal U \) such that \(  w_{k}\to w  \), the corresponding \(  C^{1}  \) vector fields \(  X_{k}  \), and a
 neighbourhood \(  \mathcal U'\subset \mathcal U  \) such that for every \(  z\in \mathcal U'  \) and every \(
 |t|\leq t_{0}  \)
 \begin{equation}\label{uniq2d}
 \|w_k-w\|_{z} e^{\widetilde{dw}_{k}(z,t)}\to 0
 \end{equation}
as \(  k\to \infty  \). Then $X$ is uniquely integrable. \end{prop}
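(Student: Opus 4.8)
The plan is to reduce the uniqueness statement for the continuous vector field $X$ to a one-dimensional ODE uniqueness problem by straightening out one coordinate direction. Since $X = \partial/\partial z^1 + a\,\partial/\partial z^2$ with $a$ continuous and non-vanishing, an integral curve of $X$ through a point is a graph $z^2 = u(z^1)$ satisfying $u' = a(z^1, u)$, and two distinct integral curves through the same point would give two distinct solutions of this scalar ODE with the same initial condition. So it suffices to show that $a$ satisfies a uniqueness criterion for the ODE $u' = a(z^1, u)$; equivalently, it suffices to produce a bound on the rate at which two solutions can separate. The idea is to compare a genuine integral curve of $X$ with integral curves of the $C^1$ approximants $X_k$, which are unique by the classical theory, and to control the divergence between nearby $X_k$-curves using the quantity $\widetilde{dw}_k$, exactly as in the construction of Sections \ref{sec:pert}--\ref{pertbounds} but now with the roles of ``perturbation estimate'' and ``Gronwall-type estimate'' combined.

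Concretely, I would proceed as follows. First, fix a point and suppose for contradiction that there are two distinct integral curves $\gamma_1, \gamma_2$ of $X$ through it, parametrized as graphs over the $z^1$-axis on a common interval. Let $\delta(t) = |\gamma_1(t) - \gamma_2(t)|$ (the difference of the $z^2$-components), so $\delta(0) = 0$ and $\delta(t_*) > 0$ for some $t_* > 0$. Second, for each $k$, the variational equation for the flow $e^{\tau X_k}$ shows that the derivative of $e^{\tau X_k}$ in the transverse ($z^2$) direction grows like $\exp\big(\int_0^\tau \partial_{z^2} a_k \circ e^{sX_k}\,ds\big)$; as in the proof of Lemma \ref{bound2}, rewriting $\eta = c(dz^2 - a\,dz^1)$ one checks that $\partial_{z^2}a_k = dw_k/c_k$ up to the logarithmic-derivative term of $c_k$, so this exponential is bounded by $\|w_k\|_z \exp(\widetilde{dw}_k(z,\tau))$ (here $X$ transversal to both axes makes $c_k$ bounded above and below). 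Third, any two $X_k$-integral curves starting at a $z^2$-distance $d_0$ apart stay within distance $d_0 \cdot \|w_k\|\,e^{\widetilde{dw}_k}$ of each other. Fourth, approximate each of $\gamma_1, \gamma_2$ by integral curves of $X_k$ through the same initial point: since $X_k \to X$ uniformly, the $X_k$-curve through $\gamma_i(0)$ converges uniformly to $\gamma_i$. But the two $X_k$-curves start at the \emph{same} point, hence coincide, so along the way they have distance $0$; passing to the limit and using the divergence bound controlled by $\|w_k - w\|\,e^{\widetilde{dw}_k} \to 0$, one squeezes $\delta(t_*) = 0$, a contradiction.

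The main obstacle is the third and fourth steps: a genuine $X$-integral curve is \emph{not} an $X_k$-integral curve, so I cannot simply say ``they coincide.'' I need to estimate how far $\gamma_i$ drifts from the $X_k$-curve through $\gamma_i(0)$ — this drift is governed by $\|X - X_k\|_{C^0} = O(\|a - a_k\|)$ amplified by the same expansion factor $\|w_k\|\,e^{\widetilde{dw}_k}$, which is where hypothesis \eqref{uniq2d} (with $\|w_k - w\|$, not just $\|a - a_k\|$, but these are comparable since $w = c(dz^2 - a\,dz^1)$ with $c$ bounded) enters decisively. So the heart of the argument is a single Gronwall-type inequality: the $z^2$-separation $\delta_k(t)$ between $\gamma_i$ and the $X_k$-curve satisfies $\delta_k'(t) \le \|a - a_k\| + \delta_k(t)\,|\partial_{z^2} a_k|$, which integrates to $\delta_k(t) \le \|a-a_k\|\cdot t\cdot \sup \|w_k\|\,e^{\widetilde{dw}_k(z,t)}$; combined with the triangle inequality $\delta(t) \le \delta_k(t)$ (from $\gamma_1$) $+ 0 + \delta_k(t)$ (from $\gamma_2$) and then letting $k \to \infty$, the right-hand side vanishes by \eqref{uniq2d}, forcing $\delta \equiv 0$. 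I expect the only subtlety beyond this to be making sure all the curves involved remain in $\mathcal{U}'$ and the comparison intervals and initial points are chosen consistently, which is routine given the transversality and the choice of $t_0$.
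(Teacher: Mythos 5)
Your reduction to the scalar ODE and your step 2 (identifying the transverse variational factor of the $X_k$-flow with $\|w_k\|_z e^{\widetilde{dw}_k(z,\tau)}$ via the computation of Lemma \ref{bound2}) are fine, but the central estimate of your argument does not follow as claimed, and the failure is exactly where the ``on average'' hypothesis bites. Integrating the inequality $\delta_k'(t)\le \|a-a_k\|+\delta_k(t)\,|\partial_{z^2}a_k|$ produces an exponential of $\int_s^t |\partial_{z^2}a_k(\xi(\tau))|\,d\tau$, where by the mean value theorem $\xi(\tau)$ lies \emph{between} the genuine curve $\gamma_i$ and the approximating $X_k$-curve; this intermediate curve is not a flow line of $X_k$. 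Hypothesis \eqref{uniq2d} controls only the \emph{signed} integral of $dw_k$ (equivalently of $\partial_{z^2}a_k$, up to the logarithmic term in $c_k$) \emph{along flow lines of $X_k$}, and in the regime the proposition is designed for $dw_k$ blows up non-uniformly, so integrals along nearby but different curves are not comparable to $\widetilde{dw}_k$; replacing the coefficient by $\sup|\partial_{z^2}a_k|$ is even worse, since $t\,\sup|dw_k|$ is precisely the quantity the averaged condition refuses to bound. A second, related defect is that your final bound multiplies $\sup_z\|a-a_k\|_z$ by $\sup_z e^{\widetilde{dw}_k(z,t)}$, whereas \eqref{uniq2d} only controls the product $\|w_k-w\|_z\,e^{\widetilde{dw}_k(z,t)}$ taken at the \emph{same} point $z$ (and only pointwise in $z$); neither factor is assumed bounded separately, so the decoupled product need not tend to zero.

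The gap can in principle be repaired, but only by restructuring the comparison so that the exponent is evaluated along genuine $X_k$-flow lines: for instance, the nonlinear variation-of-constants (Alekseev) formula writes the drift of $\gamma_i$ from the $X_k$-curve as $\int_0^t D\Phi^k_{s\to t}\,(a-a_k)(\gamma_i(s))\,ds$, where the variational factor is the derivative of the $X_k$-flow along the $X_k$-orbit of $\gamma_i(s)$, and the Lemma \ref{bound2} computation then converts it into $\|w_k\|_{\gamma_i(s)}e^{\widetilde{dw}_k(\gamma_i(s),\,t-s)}$, so the integrand has exactly the form the hypothesis controls at each point (one still needs a mild uniformity or domination to pass to the limit under the $ds$-integral). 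The paper sidesteps all of this with a global argument: Lemma \ref{unik} rescales $w_k$ to a \emph{closed} form $\hat w_k$ with the same kernel, normalized on a transversal $\mathcal L$, whose norm is bounded by $\|w_k\|e^{\widetilde{dw}_k}$ precisely because the rescaling factor is an integral along $X_k$-flow lines; Stokes' theorem on the region bounded by the two putative integral curves and $\mathcal L$ gives $|t_2|=\bigl|\int_{\mathcal X^1}\hat w_k+\int_{\mathcal X^2}\hat w_k\bigr|\le \sum_i s_i\,\|\hat w_k\|_{y}\|w-w_k\|_{y}\to 0$, a contradiction. As written, your Gronwall step is the missing idea, so the proposal has a genuine gap rather than being a complete alternative proof.
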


It might be interesting to know  if this result also admits an analogue for vector fields on \( \mathbb R \) giving conditions for uniqueness in this most simple setting, and thus allowing comparison with existing results such as \cite{Bre88}. The argument for the proof that we give below does not admit an immediate ``restriction'' to the one-dimensional setting. We first show how it implies  Proposition \ref{thm:uniqueness}.

\begin{proof}[Proof of Proposition \ref{thm:uniqueness} assuming Proposition \ref{uniq}] Let $\Delta=\ker(\eta)$ be a  uniformly asymptotically involutive on average continuous distributions as per the hypotheses of Proposition \ref{thm:uniqueness}. Then, by definition, for every \(  x_{0}\in M  \) there exists  local coordinates \(  (x^{1}, x^{2}, x^{3})   \) in some local chart \(  \mathcal U  \) in which we have vector fields \(  X_{k}, Y_{k}, X, Y  \) as in \eqref{X-defk} and \eqref{X-deflim}. We just need to show that \( X \) and \( Y \) are uniquely integrable. We will prove  unique integrability for \( X \) using Proposition \ref{uniq}, the argument for \( Y \) is completely analogous.

 Notice first of all that  \(  X \)  is contained in the surface tangent to the coordinate axes $<\partial/{\partial
 x^1}, {\partial}/{\partial x^3}>$.
Moreover, as in \eqref{eta} above,  the explicit form of the vector fields \( X_{k} \) and \( X \) mean that the forms \(  \eta_{k}  \) and \(  \eta  \) can be written as \[
 \eta_k=c_k(dx^3-a_kdx^1-b_kdx^2) \quad\text{ and } \quad
  \eta = c (dx^3-adx^1-bdx^2)
 \]
 for some non-vanishing \(  C^{1}  \) function \(  c_{k}(x)  \) and   continuous function \(  c(x)  \) respectively,
 and that
their restriction to the surface \(  \mathcal S  \) locally tangent to $<\partial/{\partial x^1}, {\partial}/{\partial x^3}>$ yields the forms \[ w_k=c_k(dx^3-a_kdx^1)\quad\text{ and } \quad
  w = c (dx^3-adx^1)
\] with the property that \(  X_{k}=\ker(w_{k})  \) and \(  X=\ker(w)  \). We therefore just need to show that the forms \(  w_{k}, w  \) satisfy the assumptions of Proposition \ref{uniq} to get unique integrability of the vector field \(  X  \). The convergence is immediate since the assumption that \(  \eta_{k}\to   \eta  \) implies that \(  a_{k}\to a  \), \(  b_{k}\to b  \) and \(  c_{k}\to c  \) and therefore in particular that \(  w_{k}\to w  \) as \(  k\to\infty  \). To show that \(  \|w_k-w \|_{x}e^{\widetilde{dw}_{k}(x,t)}\to0  \) we have, by direct calculation, \[ \begin{aligned} d\eta_k&=\left(X_k(c_k)+c_k\frac{\partial a_k}{\partial x^3}\right)dx^1\wedge dx^3+\left(Y_k(c_k)+c_k\frac{\partial b_k}{\partial x^3}\right)dx^2\wedge dx^3\\ &+\left(a\frac{\partial c_k}{\partial x^2}-b_k\frac{\partial c_k}{\partial x^1}+c_k\frac{\partial a_k}{\partial x^2}-c\frac{\partial b_k}{\partial x^1}\right)dx^1\wedge dx^2 \end{aligned} \] and $$ dw_k=\left(X_k(c_k)+c_k\frac{\partial a_k}{\partial x^3}\right)dx^1\wedge dx^3. $$ Therefore \( dw_{k}=d\eta_{k,1} \) and thus \eqref{uniq2d} follows immediately from the assumption that \( \eta \) is uniformly asymptotically involutive on average. \end{proof}

The rest of this section is  devoted to the proof Proposition \ref{uniq}. 
The main technical step is contained in Lemma \ref{unik} below
which is a sort of one-dimensional version of Theorem \ref{solution} where  we showed that each distribution can be perturbed to yield an involutive distribution and that the size of this perturbation can be controlled. Here we show that any 1-form \(  \eta  \) defining a vector field on a surface can be ``rescaled''  to a \emph{closed} 1-form defining \ \emph{the same vector field} and that this rescaling  has controlled norm.

\begin{lem}\label{unik} Let $w$ be \(  C^{2}  \) differential $1$-form on a surface \(  \mathcal S  \) and $(z^1,z^2,\mathcal{U})$ be a coordinate systems whose axes are transverse to $ker(w)$ and let $\mathcal{L}$ be an integral curve of $\partial/\partial z^2$.
 Then there is  a \(  C^{1}  \)
differential 1-form $\hat w$ with $\ker(\hat w)=\ker(w)$ and $\hat w({\partial}/{\partial z^2})=1$ along $\mathcal{L}$ such that for every \(  z\in \mathcal U  \) we have $$d\hat w=0 \quad\text{ and } \quad \|\hat w\|_{z}\leq \sup_{|t|\leq t_{0}}\{e^{\widetilde{dw}(z,t))}\} \|w\|_{z}. $$ \end{lem}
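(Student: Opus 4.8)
The plan is to realize $\hat w$ as a pointwise rescaling of $w$, with the rescaling factor obtained by solving a linear ODE along the integral curves of $\ker(w)$, in direct analogy with the construction of the perturbation $\alpha$ in Section~\ref{sec:pert} and the estimate in Lemma~\ref{bound2}. Using that the coordinate axes are transverse to $\ker(w)$, I would first write $w = c\,(dz^2-a\,dz^1)$ with $c := w(\partial/\partial z^2)$ a non-vanishing $C^2$ function, and assume (after multiplying $w$ by a constant, harmlessly as in~\eqref{c} since in all applications of this lemma $w$ arises already so normalized) that $c\ge 1$ on $\mathcal U$; set $X := \partial/\partial z^1 + a\,\partial/\partial z^2$, which spans $\ker(w)$. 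Any $1$-form with the same kernel has the form $\hat w = \hat c\,(dz^2-a\,dz^1)$, and a direct computation of the exterior derivative (the same one carried out for $d\eta$ in the proof of Lemma~\ref{bound2}) gives $d\hat w = \bigl(X(\hat c) + \hat c\,\partial a/\partial z^2\bigr)\,dz^1\wedge dz^2$. Hence $\hat w$ is closed precisely when $\hat c$ solves the first-order linear ODE $X(\hat c) = -\hat c\,(\partial a/\partial z^2)$ along the integral curves of $X$.

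Since $\mathcal L$ is an integral curve of $\partial/\partial z^2$ and $\ker(w)=\langle X\rangle$ is transverse to the coordinate axes, $X$ is transverse to $\mathcal L$, so exactly as in Section~\ref{sec:pert}, after shrinking $\mathcal U$ we may assume every $z\in\mathcal U$ lies on a unique integral curve of $X$ through a point $p(z)\in\mathcal L$ at a time $s_z$ with $|s_z|\le t_0$. Solving the ODE with the initial condition $\hat c\equiv 1$ on $\mathcal L$ then gives
\[
\hat c(z)=\exp\Bigl(-\int_0^{s_z}\frac{\partial a}{\partial z^2}\bigl(e^{\tau X}(p(z))\bigr)\,d\tau\Bigr),
\]
and I would set $\hat w := \hat c\,(dz^2-a\,dz^1)$; by construction $d\hat w = 0$ and $\hat w(\partial/\partial z^2)=\hat c\equiv 1$ on $\mathcal L$. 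That $\hat c$, and hence $\hat w$, is $C^1$ follows by the argument of Proposition~\ref{C1}, which is in fact easier here because everything is explicit: in the flow-box coordinates of $X$ around $\mathcal L$ the map $z\mapsto(s_z,p(z))$ is $C^2$, the integrand is $C^1$ because $a$ is $C^2$, and therefore the integral, and so $\hat c$, is $C^1$.

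For the norm bound, observe that $\hat w$ and $w$ differ only by the scalar factor $\hat c/c$, so $\|\hat w\|_z/\|w\|_z = |\hat c(z)|/|c(z)|$, a quantity independent of the metric and of $a$. Computing $dw$ in the same way gives $X(c) = -c\,(\partial a/\partial z^2) + \phi$, where $dw = \phi\,dz^1\wedge dz^2$ (so $\phi$ is the scalar the paper denotes $dw$), and subtracting this from the ODE for $\hat c$ yields $X\bigl(\log|\hat c/c|\bigr) = -\phi/c$. Integrating this along the integral curve of $X$ from $p(z)$ to $z$, using $\hat c(p(z))=1$ and $c\ge 1$, gives
\[
\frac{|\hat c(z)|}{|c(z)|}=\frac{1}{|c(p(z))|}\exp\Bigl(-\int_0^{s_z}\frac{\phi}{c}\bigl(e^{\tau X}(p(z))\bigr)\,d\tau\Bigr)\le\exp\Bigl(-\int_0^{s_z}\frac{\phi}{c}\bigl(e^{\tau X}(p(z))\bigr)\,d\tau\Bigr),
\]
and then using $c\ge 1$ once more to pass from $\phi/c$ to $\phi$ (exactly as $d\eta_2/c$ is replaced by $d\eta_2$ in the proof of Lemma~\ref{bound2}), together with the reparametrization $e^{\tau X}(p(z)) = e^{(\tau-s_z)X}(z)$ which identifies $-\int_0^{s_z}\phi(e^{\tau X}(p(z)))\,d\tau = \widetilde{dw}(z,-s_z)$, and $|s_z|\le t_0$, yields $\|\hat w\|_z/\|w\|_z \le e^{\widetilde{dw}(z,-s_z)}\le \sup_{|t|\le t_0} e^{\widetilde{dw}(z,t)}$. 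The crux of the argument, and the step I expect to be the main technical obstacle, is this last estimate: as in Lemma~\ref{bound2}, the normalization $c\ge 1$ together with the choice of $\mathcal L$ and of the $X$-flow-box coordinates are precisely what make the accumulated growth of the rescaling factor controllable by the averaged exterior derivative $\widetilde{dw}$ rather than by $\|dw\|$ itself.
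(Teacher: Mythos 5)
Your proposal is correct and follows essentially the same route as the paper's proof: you write $\hat w$ as a rescaling $\hat c\,(dz^2-a\,dz^1)$ of $w$, characterize closedness by the linear ODE $X(\hat c)=-\hat c\,\partial a/\partial z^2$ solved explicitly with value $1$ on $\mathcal L$ (this is exactly the paper's function $\beta$), invoke the Proposition~\ref{C1} argument for $C^1$ regularity, and obtain the norm bound by comparing with $X(c)$ via the formula for $dw$, integrating along the $X$-flow and identifying the resulting integral with $\widetilde{dw}(z,-s_z)$, just as in Lemma~\ref{bound2}. The only differences are organizational (you bound $\hat c/c$ directly rather than bounding $\beta$ and then dividing by $c$), so there is nothing substantive to add.
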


\begin{proof} The proof also proceeds along quite similar lines to the proof of Theorem \ref{solution}, though the situation here is considerably simpler. By the transversality of $ker (w)$ to the axes we have that
    $w=c(dz^2-bdz^1)$
  for some $C^2$ functions $b,c$ and, without loss of generality, we
  assume that $1\leq|c|\leq\|\eta\|$.
 We assume that the neighborhood $\mathcal{U}$ is parametrized
such that every point $z\in\mathcal{U}$ corresponds to a time $t_z$ such that the integral curve $\mathcal{X}$ of $X:= {\partial}/{\partial x^1}+b {\partial}/{\partial x^2}$ is so that $ e^{-t_zX}(z)\in\mathcal{L}$. For every \(  z\in \mathcal U  \) let
 $$
 \beta(z):=\exp\left(-\int_0^{t_z}\frac{\partial b}{\partial x^2}(e^{(\tau-t_z)X}(z))d\tau\right).
 $$
We can now define the form $$ \hat w:=\beta(dz^2-bdz^1). $$ Then, by definition of $\hat w$ we have $\ker(\hat w)=\ker(w)$ and for $z\in\mathcal{L}$ we have  $t_z=0$ which implies that the integral above vanishes and so \(  \beta(z)=1  \) and so  $\hat w(\partial/\partial z^2)=1$. Therefore we just need to show that \(  \hat w  \) is \(  C^{1}  \) and satisfies the required bounds. From the form of \(  w  \) and \(  \hat w  \) we have \begin{equation}\label{beta} \|\hat w\|_{z}\leq |\beta (z)|  \|dz^2-bdz^1\|_{z} = |\beta (z)|\frac{\|w\|_{z}}{c(z)} = \frac{\beta(z)}{c(z)}\|w\|_{z}. \end{equation} It is enough therefore to bound \(  \beta(z)/c(z)  \). Notice first that, as can be verified in a straightforward way,  the function $\beta$ is the unique solution of the partial differential equation
 $$X(u)=-u\frac{\partial b}{\partial z^2}
 $$
with boundary conditions \(  u=1  \) on \(  \mathcal L  \). By the same arguments as in the proof of Proposition \ref{C1}, we have that  $\beta$ is $C^1$ and, by direct calculation, $$ d\hat w =\left(X(\beta)+\beta\frac{\partial b}{\partial z^2}\right)dz^1\wedge dz^2=0. $$ Thus \(  \hat w \) is closed. Finally, to estimate the norm of \(  \hat w  \), again by direct calculation, we have $$ dw=\left(X(c)+c\frac{\partial b}{\partial z^2}\right)dz^1\wedge dz^2. $$ By a slight abuse of notation again we write \( dw = X(c)+c {\partial b}/{\partial z^2}. \) Then, dividing through by \(  c  \), we have $$ \frac{X(c)}{c}+\frac{\partial b}{\partial z^2}=\frac{dw}{c}. $$ Hence for every $z\in\mathcal{U}$ and $\tau\in [0,t_z] $ we have $$ \frac{X(c)}{c}\circ e^{(\tau-t_z)X}(z)+\frac{\partial b}{\partial z^2}\circ e^{(\tau-t_z)X}(z)=\frac{dw}{c}\circ\circ e^{(\tau-t_z)X}(z). $$

Integrating this equality along an integral curve of $X$ we get $$ \log\left|\frac{c(z)}{c\circ e^{-t_zX}(z))}\right|-\int_0^{t_z}\frac{dw}{c}\circ e^{(\tau-t_z)X}(z)d\tau =-\int_0^{t_z}\frac{\partial b}{\partial x^2}\circ e^{(\tau-t_z)X}(z)d\tau $$ and hence, taking exponentials, $$ \begin{aligned} \exp\left(-\int_0^{t_z}\frac{\partial b}{\partial x^2}\circ e^{(\tau-t_z)X}(z)d\tau\right)&= \left|\frac{c(z)}{c\circ e^{-t_zX}(z))}\right| \exp\left(-\int_0^{t_z}\frac{dw}{c}\circ e^{(\tau-t_z)X}(z)d\tau\right)\\ &=\left|\frac{c(z)}{c\circ e^{-t_zX}(z))}\right|\exp\left(\int_0^{-t_z}\frac{dw}{c}\circ e^{\tau X}(z)d\tau\right) \end{aligned} $$ Finally, using that $1\leq|c|\leq\|\eta\|$ , this gives \[ \beta(z) \leq c(z) \exp\left(-\int_0^{t_z}\frac{dw}{c}\circ e^{(\tau-t_z)X}(z)d\tau\right) \leq c(z) \exp (\widetilde{dw}(z,-t_z)). \] Substituting into \eqref{beta} gives the result. \end{proof}

\begin{proof}[Proof of Proposition \ref{uniq}] We suppose by contradiction that the vector field  $X$ admits two integral curves $\mathcal{X}^1$ and $\mathcal{X}^2$  through a given point $z_0\in \mathcal S$ parametrized so that \(  \mathcal X^{1}(0)=\mathcal X^{2}(0)=z_{0}  \). Now let $(z^1, z^2, \mathcal{U})$  be the local coordinate system around \(  z_{0}  \) given by the assumptions of the Theorem. In particular  $X$ is transverse to both coordinate axes, and in particular to \(  \partial/\partial z^2  \) and therefore there exists an  integral curve  $\mathcal{L}$ of $\partial/\partial x^2$ which joins two points
 $z_1=\mathcal{X}^{1}(s_1)$ and $z_{2}=\mathcal{X}^{2}(s_2)$ for some $s_1,s_2$.
We can suppose that $\mathcal{L}$ is parametrized such that $\mathcal{L}(0)=z_1$ and $\mathcal{L}(t_2)=z_2$. Let $\Gamma$ be the closed curve given by union of $\mathcal{L}$ and the two integral curves of $X$ through $z_0$, let \(  D  \) be the region bounded by \(  \Gamma  \),  and let $\hat w_k$ be the sequence of $1$-forms given by Lemma \ref{unik}. By Stokes' formula we have \begin{equation}\label{cont}
 \int_{\Gamma}\hat w_k=\int_{D}d\hat w_k=0
\end{equation}
 and therefore
$$ \int_{\Gamma}\hat w_k=\int_{\mathcal{X}^1}\hat w_k+ \int_{\mathcal{X}^2}\hat w_k+\int_{\mathcal{L}}\hat w_k=0 $$ and so \begin{equation}\label{b1}
 \left|\int_{\mathcal{X}^1}\hat w_k+
\int_{\mathcal{X}^2}\hat w_k\right|= \left|\int_{\mathcal{L}}\hat w_k\right|. \end{equation} By Lemma \ref{unik} we have   \(  \hat w_{k}(\partial/\partial z^{2})=1\) along \( \mathcal L   \) and therefore the right hand side of \eqref{b1} is equal to \(  |t_{2}|  \) where \(  t_{2}  \) is the ``distance'' between \(  z_{1}  \) and \(  z_{2}  \) along \(  \mathcal L  \). By assumption \(  t_{2}\neq 0  \) (and is independent of \(  k  \)) but we will show that the left hand side of \eqref{b1} tends to 0 as \(  k\to \infty  \), thus giving rise to a contradiction as required.

To estimate the left hand side of \eqref{b1}, notice that since the curves $\mathcal{X}^i$ are tangent to $X$ we have $$ \int_{\mathcal{X}^i}\hat w_k=\int_0^{s_i}\hat w_k (X)(\mathcal{X}^i(t))dt $$ Also since $\hat w_k(X_k)=0$ (since \(  X_{k}=ker(\hat w_{k})  \)) we can write \[ \int_{\mathcal{X}^i}\hat w_k =\int_0^{s_i}\hat w_k (X-X_k)(\mathcal{X}^i(t))dt \] Now let \(  |t_{i}|\leq s_{i}  \) be such that \[ \int_0^{s_i}\hat w_k (X-X_k)(\mathcal{X}^i(t))dt = s_{i} \hat w_k (X-X_k)(\mathcal{X}^i(t_{i})). \] Then, letting \(  y=\mathcal X^{i}(t_{i})  \) we have \begin{equation}\label{b2} \begin{aligned} \int_{\mathcal{X}^i}\hat w_k &=\int_0^{s_i}\hat w_k (X-X_k)(\mathcal{X}^i(t))dt \\ &\leq s_i\|\hat w_k\|_y\|X-X_k\|_y \leq s_i\|\hat w_k\|_y\|w-w_k\|_y \end{aligned} \end{equation} By Lemma \ref{unik} we have \[ \|\hat w_k\|_y \leq \sup_{|t|\leq t_{0}}\{e^{\widetilde{dw}(y,t))}\} \|w\|_{y} \] Substituting this into \eqref{b2} and applying the assumptions of the Theorem we get that \(  \int_{\mathcal{X}^i}\hat w_k \to 0  \) for \(  i=1,2  \) and, as explained above, this leads to a contradiction and thus completes the proof. \end{proof}

\section{Applications} In this section we prove Theorem \ref{thm-ODE} on the uniqueness of solutions for ODE's, and Theorem \ref{pfaff} on the existence and uniqueness of solutions for a class of Pfaff PDE's.

\subsection{Uniqueness of solutions for continuous ODE's}\label{ODEproof} To prove Theorem \ref{thm-ODE} we fix once and for all some reference point \( x_{0}\in \mathcal U \) and choose a sufficiently small ball \( B(x_{0},r)\subset \mathcal U \). We will show that the vector field \( X \) is uniquely integrable at each point \( x\in B(x_{0}, r) \). Notice first of all that, since \( f(x)=(f_{1}(x), f_{2}(x)) \) is non-vanishing in \( \mathcal U \), we can assume, without loss of generality and passing to a smaller radius \( r \) if necessary, that there exist local coordinates where $f_1(x)>0$ for all $x\in B(x_{0}, r)$. Then we can rescale the vector field \( X \) by dividing through by \( f_{1} \) and define \[ Y := \frac{\partial}{\partial x_1} + g(x)\frac{\partial}{\partial x_2} \quad \text{ where } \quad g(x) = \frac{f_2}{f_1}(x) \] Note that \( g \) has the same modulus of continuity $\omega(t)$ as \( f \) and unique integrability of $Y$ is equivalent to that of $X$ since the two vector fields are just rescalings one of the other (and thus define the same one-dimensional distributions). It is therefore sufficient to show that \( Y \) is uniquely integrable. To do this we first show that we can approximate the function \( g \) be a family of smooth functions \( g^{\epsilon} \) satisfying certain approximation bounds, and thus approximate the continuous vector field \(  Y  \) by corresponding smooth vector fields \(  Y^{\epsilon}  \). We then apply Proposition \ref{prop:approx} to get unique integrability of \(  Y  \).

For any small \(  \epsilon>0  \) let  \( V_{x_{0}, \epsilon} \) denote the Riemannian volume of the ball \( B(x_{0},\epsilon) \). Then,  letting \( w(t) \) denote the modulus of continuity of the function \( g \) above, we have the following result.

\begin{prop}\label{prop:approx} There exists a family of smooth functions \( \{g^{\epsilon}\}_{\epsilon > 0} \) defined on \(  B(x_{0}, r)  \) and a constant $K>0$ such that for every \( \epsilon > 0 \) we have \[ |g^{\epsilon} - g|_{\infty} \leq \frac{K}{\epsilon}\int_{0}^{\epsilon}\omega(t)dt \quad\text{ and } \quad \left|\frac{\partial g^{\epsilon}}{\partial x}\right| \leq \frac{K}{\epsilon^{2}}\int_{0}^{\epsilon} \omega(t)dt \] \end{prop}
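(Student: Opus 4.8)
The plan is to construct $g^\epsilon$ by convolution (mollification) of $g$ against a smooth bump function rescaled to width $\epsilon$, and then to extract the two stated bounds directly from the modulus of continuity. Concretely, fix a smooth nonnegative function $\rho$ supported in the unit ball of $\mathbb R^2$ with $\int \rho = 1$, and set $\rho_\epsilon(y) = \epsilon^{-2}\rho(y/\epsilon)$; then define
\[
g^\epsilon(x) := (g * \rho_\epsilon)(x) = \int_{\mathbb R^2} g(x-y)\rho_\epsilon(y)\, dy = \int_{B(0,\epsilon)} g(x-y)\rho_\epsilon(y)\, dy.
\]
(One must first extend $g$ continuously to a slightly larger neighbourhood so that $g^\epsilon$ is defined on all of $B(x_0,r)$; since $g$ is uniformly continuous on a compact set this is harmless and does not affect the modulus of continuity, possibly after enlarging the constant $K$.) The function $g^\epsilon$ is smooth because $\rho_\epsilon$ is, and all $x$-derivatives can be moved onto $\rho_\epsilon$.

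For the first bound, write $g^\epsilon(x) - g(x) = \int_{B(0,\epsilon)} \big(g(x-y)-g(x)\big)\rho_\epsilon(y)\,dy$, using $\int\rho_\epsilon = 1$. Then $|g(x-y)-g(x)| \le \omega(|y|) \le \omega(\epsilon)$ for $|y|\le\epsilon$... but this only gives $|g^\epsilon - g|_\infty \le \omega(\epsilon)$, which is not quite the claimed averaged bound. To get the integral form one should instead choose $\rho$ radial and estimate more carefully, or simply note that for an increasing modulus of continuity $\omega$ one has $\omega(\epsilon) \le \frac{2}{\epsilon}\int_{\epsilon/2}^{\epsilon}\omega(t)\,dt \le \frac{2}{\epsilon}\int_0^\epsilon \omega(t)\,dt$; for a general modulus one replaces $\omega$ by its increasing majorant $\hat\omega$ as remarked in the paper. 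Alternatively, passing to polar coordinates $y = s\theta$ gives $|g^\epsilon(x)-g(x)| \le \int_0^\epsilon \omega(s)\, \big(\int_{S^1}\rho_\epsilon(s\theta)\,s\,d\theta\big)\,ds$ and a suitable choice of $\rho$ makes the inner factor comparable to $\epsilon^{-1}$, yielding exactly $\frac{K}{\epsilon}\int_0^\epsilon\omega(t)\,dt$.

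For the derivative bound, differentiate under the integral sign: $\frac{\partial g^\epsilon}{\partial x}(x) = \int g(x-y)\,\frac{\partial \rho_\epsilon}{\partial x}(y)\,dy$. Since $\int \frac{\partial\rho_\epsilon}{\partial x}(y)\,dy = 0$, we may subtract the constant $g(x)$ and write $\frac{\partial g^\epsilon}{\partial x}(x) = \int_{B(0,\epsilon)}\big(g(x-y)-g(x)\big)\frac{\partial\rho_\epsilon}{\partial x}(y)\,dy$. Now $\big|\frac{\partial\rho_\epsilon}{\partial x}(y)\big| \le \epsilon^{-3}\|\nabla\rho\|_\infty$, so $\big|\frac{\partial g^\epsilon}{\partial x}(x)\big| \le \epsilon^{-3}\|\nabla\rho\|_\infty \int_{B(0,\epsilon)}\omega(|y|)\,dy$, and polar coordinates turn $\int_{B(0,\epsilon)}\omega(|y|)\,dy$ into (a constant times) $\int_0^\epsilon \omega(s)\,s\,ds \le \epsilon\int_0^\epsilon\omega(t)\,dt$, giving the claimed $\frac{K}{\epsilon^2}\int_0^\epsilon\omega(t)\,dt$. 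Taking $K$ to be the maximum of the constants arising in the two estimates completes the proof.

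The only genuinely delicate point — and the one I would be most careful about — is matching the \emph{averaged} form $\frac1\epsilon\int_0^\epsilon\omega(t)\,dt$ in the statement rather than the cruder pointwise bound $\omega(\epsilon)$; this is what forces one to track the radial distribution of mass of the mollifier (or, more cheaply, to invoke the increasing-majorant reduction). Everything else is the standard mollification calculus: smoothness, differentiation under the integral, and the vanishing of $\int\partial_x\rho_\epsilon$, which is what converts a naive $O(\epsilon^{-1})$ derivative bound into the correct one with the modulus of continuity built in.
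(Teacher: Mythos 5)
Your construction is the paper's: mollify $g$ at scale $\epsilon$, use $\int\rho_\epsilon=1$ (resp.\ $\int\partial_x\rho_\epsilon=0$) to insert $g(x-y)-g(x)$, bound this by $\omega(|y|)$, and pass to polar coordinates. Your derivative estimate is exactly the paper's. For the zeroth-order bound, the branch of your argument that is correct is the ``alternative'' you mention almost in passing, and it needs no ``suitable choice of $\rho$'' and no tracking of the radial distribution of mass: for \emph{any} mollifier one has $\rho_\epsilon\le\epsilon^{-2}\|\rho\|_\infty$, so
\[
|g^\epsilon(x)-g(x)|\le\int_{B(0,\epsilon)}\omega(|y|)\rho_\epsilon(y)\,dy\le \frac{\|\rho\|_\infty}{\epsilon^{2}}\int_{B(0,\epsilon)}\omega(|y|)\,dy=\frac{2\pi\|\rho\|_\infty}{\epsilon^{2}}\int_0^\epsilon t\,\omega(t)\,dt\le\frac{2\pi\|\rho\|_\infty}{\epsilon}\int_0^\epsilon\omega(t)\,dt,
\]
the averaged form coming for free from the factor $t$ in the two-dimensional area element together with $t\le\epsilon$. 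This is precisely the computation in the paper.

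By contrast, your primary route to the first bound is genuinely wrong and should be discarded. The inequality $\omega(\epsilon)\le\frac{2}{\epsilon}\int_{\epsilon/2}^{\epsilon}\omega(t)\,dt$ fails already for $\omega(t)=t$ (the right-hand side is $\tfrac34\epsilon$), and no constant $K$ independent of $\epsilon$ can convert the crude bound $\omega(\epsilon)$ into $\frac{K}{\epsilon}\int_0^\epsilon\omega(t)\,dt$ in general: for $\omega(t)=e^{-1/t}$ the ratio $\omega(\epsilon)\big/\bigl(\tfrac1\epsilon\int_0^\epsilon\omega\bigr)$ blows up like $1/\epsilon$. Replacing $\omega$ by an increasing majorant $\hat\omega$ does not rescue this either, since the Proposition (and its use in the proof of Theorem \ref{thm-ODE}, where condition \eqref{ODEstar} is imposed on $\omega$ itself) requires the bound in terms of $\omega$, and $\int_0^\epsilon\hat\omega$ is not controlled by $\int_0^\epsilon\omega$. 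So keep only the polar-coordinate estimate above; with it, your proof coincides with the paper's.
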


\begin{proof}[Proof of Theorem \ref{thm-ODE} assuming Proposition \ref{prop:approx}] Notice first of all that  \( Y = ker (w) \), where  \( w \) is the continuous 1-form \( w := dx-g(x)dt. \) Then there exists a family of approximating vector fields \(  Y^{\epsilon}  \) given by the smooth  \( 1 \)-forms \( w^{\epsilon}:=dx-g^{\epsilon}(x)dt \) where the smooth functions \( g^{\epsilon} \) are given by Proposition~\ref{prop:approx}. Notice that since \(  w^{\epsilon}  \) are smooth they admit exterior derivatives \(  dw^{\epsilon}  \) and \(  |dw^{\epsilon}|_{\infty}= |\partial g/\partial x|_{\infty}  \). Then from  Proposition~\ref{prop:approx}  we get 
\[
 |w^{\epsilon}-w|_{\infty}e^{\epsilon |dw^{\epsilon}|_{\infty}} \leq \frac{K}{\epsilon}\int_{0}^{\epsilon} \omega(t)dt \exp\left( \frac{K}{\epsilon}\int_{0}^{\epsilon} \omega(t)dt \right) 
\] 
and therefore,  since the right hand side tends to zero by assumption, we have
 \(
|w^{\epsilon}-w|_{\infty}e^{\epsilon |dw^{\epsilon}|_{\infty}}  \to 0 \) as \(  \epsilon \to 0  \). 
A direct application of Theorem \ref{Frobenius} gives the  unique integrability of \( Y \). 
\end{proof}

\begin{proof}[Proof of Proposition \ref{prop:approx}] We will use some fairly  standard approximations by mollifiers. In particular the calculations below follow closely the ones in \cite{S2} but are formulated in terms of modulus of continuity rather than the H\"older norm. Let \(  r\in (0,1)  \) and $\phi$ be at the standard  mollifier supported on \(  B(x_{0}, r/2)  \) and, for every \(  \epsilon\in (0,1) \), let
 \[
 \phi_{\epsilon} := \frac{1}{V_{x_{0}, \epsilon r/2}}\phi \left(\frac{x}{\epsilon}\right).
\quad\text{ and } \quad g^{\epsilon}(x) := \int_{B(x_{0},\epsilon)}\phi_{\epsilon}(y)g(x-y)dy. \]
 By the well-known properties of mollifiers we have  that \(  \phi_{\epsilon}  \) is supported on \(  B(x_{0},
 \epsilon)  \), \(  \int \phi_{\epsilon}=1,  |\phi_{\epsilon}|_{\infty}\leq 1/V_{x_{0}, \epsilon r/2}  \),
 \[
\int_{B(x_{0},\epsilon)}\frac{\partial \phi_{\epsilon}}{\partial x}=0, \quad\text{ and }\quad \frac{\partial \phi_{\epsilon}}{\partial x}(x) = \frac{1}{\epsilon V_{x_{0}, \epsilon r/2}}\frac{\partial \phi}{\partial x}(\frac{x}{\epsilon}). \] Therefore, using these properties \[ |g^{\epsilon}(x)-g(x)| \leq  \int_{B(0,\epsilon)}|\phi_{\epsilon}(y)| |g(x-y)-g(x)|dy \leq |\phi_{\epsilon}|_{\infty}\int_{B(x_{0},\epsilon)}\omega(|y|)dy. \] Passing  to polar coordinates $(r,\theta)$ and noting that $|y| = r$ and that the volume form in polar coordinates has the form $dV = r  drd\theta$, we get \[ \begin{aligned} |g^{\epsilon}(x)-g(x)| &\leq |\phi_{\epsilon}|_{\infty}\int_{B(x_{0},\epsilon)}\omega(|y|)dy \leq \frac{2\pi}{V_{x_{0}, \epsilon r/2}} \int_{0}^{\epsilon}t\omega(t)dt \\ &\lesssim \frac 1{\epsilon^{2}}\int_{0}^{\epsilon}t\omega(t)dt \leq \frac 1\epsilon \int_{0}^{\epsilon}\omega(t)dt \end{aligned} \] which proves the first claim. Similarly \[ \begin{aligned} \left|\frac{\partial g^{\epsilon}}{\partial x}(x)\right| &=\left|\int_{B(x_{0},\epsilon)}\frac{\partial \phi_{\epsilon}}{\partial x}(y)g(x-y)dy \right|\\ &=|\int_{B(x_{0},\epsilon)}\frac{\partial \phi_{\epsilon}}{\partial x}(y)(g(x-y)-g(x))dy|\\ &\leq \int_{B(x_{0},\epsilon)}\left |\frac{\partial \phi_{\epsilon}}{\partial x}(y)\right|\omega(|y|)dy\\ &=|d\phi_{\epsilon}|_{\infty}\int_{B(x_{0},\epsilon)}|\omega(|y|)dy \\ &=|d\phi|_{\infty}\frac{2\pi}{\epsilon V_{x_{0}, \epsilon r/2}}\int_{0}^{\epsilon}t\omega(t)dt \\ &\lesssim \frac 1{\epsilon^{3}}\int_{0}^{\epsilon}t\omega(t)dt \leq \frac 1{\epsilon^{2}} \int_{0}^{\epsilon}\omega(t)dt \end{aligned} \] where last line is again achieved by passing to polar coordinates This completes the proof of the proposition. \end{proof}

\subsection{Pfaff's Problem}\label{sec:pfaff} The proof of Theorem \ref{pfaff} is based on the observation that
 notion of integrability for the PDE \eqref{Fro}
 is closely related to the geometric integrability of distributions in the following way.
Let  \(  \Delta  \) be the two-dimensional distribution in \(  \mathcal U  \) spanned by the local frame  \(   \{ X, Y\}  \) with \begin{equation}\label{eq:span}
 \quad X:=\frac{\partial}{\partial x}+a\frac{\partial}{\partial z}
 \quad \text{ and } \quad
 Y:=\frac{\partial}{\partial y}+b\frac{\partial}{\partial z}
\end{equation} where \(  a(x,y,z), b(x,y,z)  \) are  the functions in \eqref{Fro}.

\begin{prop}\label{pfaffint}
 \eqref{Fro} is (uniquely) integrable iff
 $\Delta$ is (uniquely) integrable.
 \end{prop}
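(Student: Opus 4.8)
The plan is to set up an exact dictionary between graphs $z=f(x,y)$ and integral manifolds of $\Delta$, using the fact that the special form of the frame \eqref{eq:span} forces $\Delta$ to be everywhere transversal to the $\partial/\partial z$ axis; once this dictionary is in place, both equivalences (integrability, and unique integrability) are essentially immediate and no Frobenius-type argument is needed.

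First I would record the transversality and graph structure. Since $X$ and $Y$ in \eqref{eq:span} project under $(x,y,z)\mapsto(x,y)$ to the linearly independent vectors $\partial/\partial x,\partial/\partial y$, the plane $\Delta_p$ contains no vertical direction, so every integral manifold $N$ of $\Delta$ is transversal to $\partial/\partial z$; hence, after shrinking to a neighbourhood of a chosen point $p=(x_0,y_0,z_0)\in N$, the projection restricts to a diffeomorphism of $N$ onto an open subset of the $(x,y)$-plane, and $N$ is the graph $\{z=f(x,y)\}$ of a $C^1$ function $f$ with $f(x_0,y_0)=z_0$. Conversely the graph of any $C^1$ function on an open subset of $\mathcal V$ is a $C^1$ submanifold transversal to $\partial/\partial z$.

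Next I would carry out the pointwise identification. A $2$-plane $P\subset\mathbb R^3$ transversal to $\partial/\partial z$ is spanned by the \emph{unique} pair of vectors it contains of the form $\partial/\partial x+c\,\partial/\partial z$ and $\partial/\partial y+d\,\partial/\partial z$, and is therefore completely determined by $(c,d)$. For the graph of $f$ the tangent plane at $(x,y,f(x,y))$ has $(c,d)=(f_x,f_y)$, while $\Delta$ at the same point has $(c,d)=\bigl(a(x,y,f(x,y)),b(x,y,f(x,y))\bigr)$; hence the graph is an integral manifold of $\Delta$ if and only if $f_x=a(x,y,f)$ and $f_y=b(x,y,f)$, i.e.\ if and only if $f$ solves \eqref{Fro}. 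Combining this with the previous step: there is an integral manifold of $\Delta$ through $(x_0,y_0,z_0)$ iff there is a solution $f$ of \eqref{Fro} with $f(x_0,y_0)=z_0$, which is the equivalence of integrability. For uniqueness, distinct solutions $f_1\ne f_2$ (as germs at $(x_0,y_0)$) give distinct graphs and hence distinct integral manifolds; conversely, given two integral manifolds through $p$, restricting both to a common neighbourhood on which each is a graph turns them into graphs of functions $f_1,f_2$ with $f_i(x_0,y_0)=z_0$, and the manifolds are distinct near $p$ exactly when $f_1\ne f_2$. Thus uniqueness of the integral manifold is equivalent to uniqueness of the solution.

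The only genuinely delicate point is the bookkeeping about regularity and localisation: one must make sure the notion of ``solution'' of \eqref{Fro} used here (a function whose first partials exist and satisfy the equations, hence automatically $C^1$ since $a,b$ are continuous) matches exactly the notion of a $C^1$ integral manifold, and that, because integrability and its uniqueness are local statements, it is legitimate to pass freely to smaller neighbourhoods of $p$ over which every integral manifold is a graph. I expect no serious obstacle beyond this tidying, since once transversality to $\partial/\partial z$ is in hand the correspondence is tautological.
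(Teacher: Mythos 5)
Your proposal is correct and follows essentially the same route as the paper: identify graphs of solutions of \eqref{Fro} with integral manifolds of $\Delta$ via the tangent-plane comparison between \eqref{span} and \eqref{eq:span}, using transversality to $\partial/\partial z$ to realize every integral manifold locally as a graph. You merely spell out the uniqueness bookkeeping and the ``plane determined by $(c,d)$'' observation more explicitly than the paper does.
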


\begin{proof} Suppose first that \eqref{Fro} is integrable. Let \(  (x_{0}, y_{0})\in \mathcal V  \) and let \(  f  \) be a solution of \eqref{Fro} with initial condition \(  f(x_{0}, y_{0})=z_{0}  \). Then the graph \[ \Gamma(f)=\{(x,y,f(x,y)), (x,y)\in \mathcal V\} \] is  an embedded surface in \(  \mathbb R^{3}  \) and the tangent space at a point \(  (x, y, f(x,y))  \)
  is spanned by
\begin{equation}\label{span} \frac{\partial}{\partial x}+\frac{\partial f}{\partial x}\frac{\partial}{\partial z} \quad\text{ and } \quad \frac{\partial}{\partial y}+\frac{\partial f}{\partial y}\frac{\partial}{\partial z}. \end{equation}
 Since \(  f  \) is a solution of \eqref{Fro}, then \eqref{span} is exactly of the form \eqref{eq:span}
and so the graph \(  \Gamma(f)  \) is a (unique)  integral manifold of  $\Delta.$ Conversely,  suppose that the distribution \(  \Delta  \) with local frame \eqref{eq:span} is (uniquely) integrable. Then the integral manifold of $\Delta$ through a point $(x_0,y_0,z_0)\in \mathcal U$ can be realized as a graph of a (unique) function $f=f(x,y)$. The tangent spaces of this graph are by definition given by the span of \eqref{eq:span} but also of \eqref{span} and thus  \(  f  \) is a solution of \eqref{Fro} which is therefore (uniquely) integrable. \end{proof}

\begin{proof}[Proof of Theorem \ref{pfaff}] We translate the problem into the geometric problem of the integrability of the corresponding distribution \(  \Delta  \) spanned by vector fields of the form \eqref{eq:span}.  Then  \(  \Delta  \) can be written as the kernel \(  \Delta=ker (\eta)  \) of the 1-form \[ \eta= dz - a dx - bdy. \] We will construct a sequence \(  \eta_{k}  \) of \(  C^{1}  \) with the property that \(  \eta_{k}\to \eta  \), \(  \|d\eta_{k}\|  \) uniformly bounded in \(  k  \), and \(  \|\eta_{k}\wedge d\eta_{k}\|\to 0  \).  This implies that \(  \Delta  \) is uniformly asymptotically involute and is thus uniquely integrable by Theorem \ref{Frobenius}. Let \(  F^{(k)}(z)  \) be a sequence of \(  C^{1}  \) functions such that \(  F^{(k)}\to F  \) in the \(  C^{0}  \) topology and such that the derivative \(  F_{z}^{(k)}  \) is uniformly bounded in \(  k  \) (this is possible because \(  F  \) is Lipschitz continuous). Now let \[ \eta_{k}:= dz - A^{(k)}F^{(k)}dx - B^{(k)}F^{(k)}dy. \] Then \(  \eta  \) is a \(  C^{1}  \) form and clearly \(  \eta_{k}\to \eta  \). Moreover, by direct calculation, we have \begin{align*} d\eta_{k} &= -d(A^{(k)}F^{(k)})\wedge dx - d(B^{(k)}F^{(k)})\wedge dy \\&= -A^{(k)}_{y}F^{(k)}dy\wedge dx - A^{(k)}F_{z}^{(k)}dz\wedge dx - B_{x}^{(k)}F^{(k)}dx\wedge dy - B^{(k)}F^{(k)}_{z} dz\wedge dy \\ &= (A^{(k)}_{y}-B_{x}^{(k)})F^{(k)} dx\wedge dy + A^{(k)}F_{z}^{(k)}dx\wedge dz + B^{(k)}F^{(k)}_{z} dy\wedge dz. \end{align*} Th functions \(  A^{(k)}, B^{(k)}, F^{(k)}  \) are converging to the corresponding functions \(  A, B, F  \) and are therefore uniformly bounded, the functions \(  F^{(k)}_{z}  \) are uniformly bounded by construction, and \(  A^{(k)}_{y}-B_{x}^{(k)} \to 0  \) by assumption. It follows that \(  \|d\eta_{k}\|  \) is uniformly bounded in \(  k  \). Finally, again by direct calculation we have \begin{align*} \eta_{k}\wedge d\eta_{k} =&
 [(A^{(k)}_{y}-B_{x}^{(k)})F^{(k)} - A^{(k)}F^{(k)}B^{(k)}F^{(k)}_{z} + B^{(k)}F^{(k)}A^{(k)}F^{(k)}_{z}]dx\wedge
 dy\wedge dz
 \\
 =& (A^{(k)}_{y}-B_{x}^{(k)})F^{(k)} dx\wedge dy\wedge dz.
\end{align*} Therefore \(  \|\eta_{k}\wedge d\eta_{k}\|\to 0  \) and this completes the proof. \end{proof}

\newpage Stefano Luzzatto \\ \textsc{Abdus Salam International Centre for Theoretical Physics (ICTP), Strada Costiera 11, Trieste, Italy}\\
 \textit{Email address:} \texttt{luzzatto@ictp.it}

\medskip Sina Tureli \\ \textsc{Abdus Salam International Centre for Theoretical Physics (ICTP), Strada Costiera 11, Trieste, Italy} and \textsc{International School for Advanced Studies (SISSA), Via Bonomea 265, Trieste}\\
 \textit{Email address:} \texttt{sinatureli@gmail.com}

\medskip Khadim  War\\ \textsc{Abdus Salam International Centre for Theoretical Physics (ICTP), Strada Costiera 11, Trieste, Italy} and \textsc{International School for Advanced Studies (SISSA), Via Bonomea 265, Trieste}\\
 \textit{Email address:} \texttt{kwar@ictp.it}

\end{document}